\documentclass[11pt]{article}
\usepackage[a4paper,centering,vscale=0.75,hscale=0.7]{geometry}
\usepackage{mathrsfs,amssymb,amsmath,amsthm}

\newcommand{\enne}{\mathbb{N}}
\newcommand{\erre}{\mathbb{R}}
\newcommand{\B}{\mathscr{B}}
\newcommand{\E}{\mathbb{E}}
\newcommand{\LL}{\mathscr{L}}
\renewcommand{\P}{\mathbb{P}}

\DeclareMathOperator{\dom}{dom}
\newcommand{\ip}[2]{\langle #1,#2 \rangle}
\newcommand{\bip}[2]{\left\langle #1,#2 \right\rangle}

\newtheorem{thm}{Theorem}[section]
\newtheorem{lemma}[thm]{Lemma}
\newtheorem{prop}[thm]{Proposition}
\newtheorem{coroll}[thm]{Corollary}
\theoremstyle{definition}
\newtheorem{defi}[thm]{Definition}
\newtheorem{ex}[thm]{Example}
\theoremstyle{remark}
\newtheorem{rmk}[thm]{Remark}

\numberwithin{equation}{section}


  %
  {%
    \end{oldthebibliography}%
  }

\title{On smoothing properties of transition semigroups associated to
  a class of SDEs with jumps}
\author{Seiichiro Kusuoka%
  \thanks{Gradute School of Science, Kyoto University.}  \and Carlo
  Marinelli%
  \thanks{Department of Mathematics, University College London, Gower
    Street, London WC1E 6BT, United Kingdom, and Facolt\`a di
    Economia, Universit\`a di Bolzano, Italy.}}
\date{14 August 2012}

\begin{document}
\maketitle

\begin{abstract}
  We prove smoothing properties of nonlocal transition semigroups
  associated to a class of stochastic differential equations (SDE) in
  $\erre^d$ driven by additive pure-jump L\'evy noise. In particular,
  we assume that the L\'evy process $Z$ driving the SDE is the sum of
  a subordinated Wiener process $Y$ (i.e. $Y=W \circ T$, where $T$
  is an increasing pure-jump L\'evy process starting at zero and
  independent of the Wiener process $W$) and of an arbitrary L\'evy
  process $\xi$ independent of $Y$, that the drift coefficient is
  continuous (but not necessarily Lipschitz continuous) and grows not
  faster than a polynomial, and that the SDE admits a Feller weak
  solution. By a combination of probabilistic and analytic methods, we
  provide sufficient conditions for the Markovian semigroup associated
  to the SDE to be strong Feller and to map $L_p(\erre^d)$ to
  continuous bounded functions. A key intermediate step is the study
  of regularizing properties of the transition semigroup associated to
  $Y$ in terms of negative moments of the subordinator $T$.
\end{abstract}

\section{Introduction}
The purpose of this work is to prove smoothing properties for the
(Markovian) semigroup generated by the (weak) solution to a stochastic
differential equation in $\erre^d$ of the type
\begin{equation}     \label{eq:sde}
dX_t = b(X_t)\,dt + dZ_t, \qquad X_0=x,
\end{equation}
where $Z$ is a pure-jump L\'evy process which can be written as
$Z=Y+\xi$, where $Y$ is obtained by subordination of a Wiener process
$W$ with non-degenerate covariance matrix, and $\xi$ is a further
L\'evy process independent of $Y$, on which
no further assumption is imposed.
In particular, assume that \eqref{eq:sde} admits a Markovian weak
solution denoted by $(X^x_t)_{t\geq 0}$, and that the semigroup
$(P^X_t)_{t>0}$, $P^X_tf(x) :=\E f(X^x_t)$, for $f$ Borel measurable
and bounded, is Feller, i.e. that $P_t^X$ leaves invariant the space
of bounded continuous functions. We look for sufficient conditions on
the L\'evy process $Y$ and on the drift coefficient $b$ such that
$P^X_t$ is strong Feller, resp. $L_p$-strong Feller, for all $t>0$,
i.e. that $P^X_t$ maps bounded Borel measurable functions,
resp. $L_p(\erre^d)$, to bounded continuous functions.

We proceed in two steps: first we study the regularizing properties of
the semigroups $(P^Y_t)_{t>0}$ and $(P^Z_t)_{t>0}$ associated,
respectively, to the L\'evy process $Y$ and $Z$ (an issue which is
interesting in its own right); then we show that the semigroup
associated to $X$ inherits, at least in part, the regularizing
properties of $P^Y$. In particular, in the former step we provide
conditions in terms of the existence of negative moments of the
subordinator $T_t$ such that $P^Y_t$ (hence also $P^Z_t$, as we
shall see) maps $\B_b(\erre^d)$, the space of bounded Borel measurable
functions on $\erre^d$, or $L_p(\erre^d)$, to $C^k_b(\erre^d)$, $k \in
\enne$. The latter step is a perturbation argument relying on
Duhamel's formula.

The strong Feller property for semigroups generated by solutions to
stochastic (both ordinary and partial) differential equations with
jumps is usually obtained by suitable versions of the
Bismut-Elworthy-Li formula (see e.g. \cite{cm:JFA10,PriZa}). However,
this method requires the driving noise to have a non-degenerate
diffusive component, therefore it is not applicable to our problem.
For some special classes of equations driven by pure jump noise other
approaches have been devised: for instance, in \cite{PriZa:stru} the
authors prove the strong Feller property for the semigroup generated
by the solution to a semilinear SPDE driven by an infinite sum of
one-dimensional independent stable processes, assuming that the
nonlinearity in the drift term is Lipschitz continuous and
bounded. Their proofs rely on finite-dimensional projections and
specific properties of stable measures.

Let us also mention that the problem we are dealing with admits a
clear analytic interpretation. In fact, an application of It\^o's
formula yields that the generator $L$ of $P^X$ acts on smooth
functions as follows:
\[
L\phi(x) = \ip{b(x)}{\nabla \phi(x)} 
+ \int_{\erre^d \setminus \{ 0\}} \bigl(
\phi(x+y) - \phi(x) - \ip{\nabla \phi(x)}{y}1_{\{|y|<1\}} \bigr) \,m^Z(dy),
\]
where $m^Z$ stands for the L\'evy measure of $Z$. 
Therefore, being somewhat formal, our problem is equivalent to
establishing regularity (specifically, continuity and boundedness) of
the solution at time $t>0$ to the non-local parabolic Kolmogorov
equation $\partial_t u = Lu$, $u(0)=u_0$, where the initial datum
$u_0$ is taken either Borel measurable and bounded, or belonging to an
$L_p$ space, on $\erre^d$. Using analytic methods, related problems
have already been investigated, e.g. in \cite{MikuPrag:HS}, where it
is assumed, roughly speaking, that $Z$ is a perturbation of an
$\alpha$-stable process. For more recent results, covering also
nonlinear equations, one could see e.g. \cite{CaChaVas} and references
therein. It does not seem, however, that our results can be recovered
by available regularity estimates for non-local parabolic equations.

One should also recall that there exists a rich literature on
existence and regularity of densities for solutions to SDEs with
jumps, mostly applying suitable versions of Malliavin calculus (see
e.g. \cite{BalCle,IshiKu,Kun:*,Sei2,Lea:sub} and references
therein). Such existence and regularity result may be used, in turn,
to prove that a Feller process is strong Feller (see
\cite[Corollary~2.2]{SchiWa} for a general result in this
direction).
In general, however, applying Malliavin calculus directly to an SDE
driven by a L\'evy process usually requires that the L\'evy measure of
the driving noise admits sufficiently many finite moments (see
e.g. \cite{IshiKu,Kun:*}). This problem of course does not occur in
the case of SDEs driven by Brownian motion
(cf.~\cite{KuStr2}). Moreover, the coefficients of the SDE are assumed
to be sufficiently smooth (usually of class $C^2_b$ at least), and it
is difficult to weaken this hypothesis very much.
The smoothing properties proved in this paper, on the other hand, are
applicable also to equations driven by L\'evy processes whose L\'evy
measure possesses only moments of very low order (such as stable
processes), and with a drift coefficient that is not Lipschitz
continuous.

From the analytic point of view, speaking again somewhat formally, the
distribution $\mu_t$ of the solution $X_t^x$, $t \geq 0$, to
\eqref{eq:sde} solves, in the sense of distributions, the non-local
parabolic equation for probability measures $\partial_t \mu = L^*\mu$,
with initial datum equal to a Dirac measure centered at $x$, where
$L^*$ stands for the formal adjoint of $L$.  Note that, in our
specific situation, assuming for the sake of simplicity that the
generator of $Z$ is symmetric (which is certainly the case if $\xi
\equiv 0)$, one can write
\[
L^*\phi(x) = \operatorname{div}(b\phi) + \int_{\erre^d \setminus \{ 0\}} \bigl(
\phi(x+y) - \phi(x) - \ip{\nabla \phi(x)}{y}1_{\{|y|<1\}} \bigr)
\,m^Z(dy).
\]
Unfortunately, however, we are not aware of any existence and
regularity results for non-local Fokker-Planck equations of the type
$\partial_t\mu=L^*\mu$. Nonetheless, it is interesting to note that,
if one knows a priori (or assumes, as we do) that $X$ has the Feller
property, such results would imply regularity properties of the
solution to the Kolmogorov equation $\partial_t u=Lu$.

Let us also recall that subordination has already been used to
establish the strong Feller property for some classes of Markov
processes with jumps (cf.~\cite{GoRoWa,Sei2,Lea:sub}). We would like
to stress, however, that it seems difficult to deduce
properties of semigroups generated by operators such as $L$ from the
properties of semigroups generated by corresponding local operators of
the type $L_\ell \phi = \ip{b}{\nabla\phi}+\Delta\phi$. Using
more probabilistic language, it is not clear at all whether one can
establish properties of the solution to an SDE of the type
\eqref{eq:sde} (assuming $\xi \equiv 0$ for simplicity) studying the
process obtained by subordination with $T$ of the solution to the same
SDE with $Y$ replaced by the Wiener process $W$. These considerations
and the need to treat semigroups generated by non-local operators with
drift are the main motivations for our approach.

Smoothing properties of equations with multiplicative noise (i.e. with a ``diffusion'' coefficient depending on $X$ in front of the noise in \eqref{eq:sde}) are also an interesting problem, but unfortunately it seems difficult to adapt our techniques to this case. On the other hand, if both the drift and the diffusion coefficients are sufficiently smooth (i.e. at least of class $C_b^2$), and the noise $Z$ is $\alpha$-stable, we show that one can apply Malliavin calculus methods to prove that the solution generates a strong Feller semigroup.

\medskip

The paper is organized as follows: we collect in Section
\ref{sec:prel} some basic preliminaries, and, in Section
\ref{sec:smoothY}, we extensively study regularizing properties of
semigroups associated to subordinate Wiener processes. In particular,
we derive estimates on the $k$-th order Fr\'echet derivative of such
semigroups in terms of negative moments of the corresponding
subordinators. These estimates are an essential ingredient for the
proof of the main results in Section \ref{sec:main}. Finally, in
Section \ref{sec:Malliavin} we consider the case of equations with
multiplicative stable noise: under smoothness
assumptions on the coefficients, we prove the strong Feller property
of the transition semigroup, applying some results that were obtained
in \cite{Sei2} by a suitable version of Malliavin calculus.

\subsection*{Acknowledgments}
Part of the work for this paper was carried out when both authors
where visiting the Institute for Applied Mathematics of the University
of Bonn. We are particularly grateful to our hosts Sergio Albeverio
and Andreas Eberle. The first-named author also gratefully acknowledges the support of the Japan Society for the Promotion of Science through the Excellent Young Researcher Overseas Visit Program.

\section{Preliminaries}     \label{sec:prel}
\subsection{Notation and terminology}
We shall denote the set of bounded Borel measurable functions on
$\erre^d$ by $\B_b(\erre^d)$. Note that $\B_b(\erre^d)$, endowed with
the norm $\|\phi\|_\infty:=\sup_{x\in \erre^d} |\phi(x)|_{\erre^d}$,
is a Banach space. The subset of $\B_b(\erre^d)$ consisting of functions
with compact support is denoted by $\B_{b,c}(\erre^d)$. The space of bounded
continuous functions on $\erre^d$ will be denoted by $C_b(\erre^d)$, and,
similarly, $C^k_b(\erre^d)$, $k \in \mathbb{N}$, will denote the space of
continuously differentiable functions with bounded derivatives up to
order $k$. The space of infinitely differentiable functions with
compact support is denoted by $C^\infty_c(\erre^d)$.  Given a function
$f:\erre^d \to \erre$ and a multiindex $\alpha \in \enne_0^d$ (where
$\enne_0:=\enne \cup \{0\}$), we shall use the standard notation
\[
\partial^\alpha f(x_0) := \frac{\partial^{\alpha_1}}{\partial x_1^{\alpha_1}}
\frac{\partial^{\alpha_2}}{\partial x_2^{\alpha_2}} \cdots
\frac{\partial^{\alpha_d}}{\partial x_d^{\alpha_d}} f(x_0),
\qquad x_0 \in \erre^d,
\]
and $|\alpha|=\alpha_1+\alpha_2+\cdots+\alpha_d$.
Given $n \in \enne$, the $n$-th Fr\'echet derivative of $f$ at a point
$x_0 \in \erre^d$ will be denoted by $D^nf(x_0)$. Recall that
$D^nf(x_0)$ can be identified with an element of $\LL_n(\erre^d)$, the
space of $n$-multilinear mappings on $\erre^d$.

Lebesgue spaces are denoted by $L_p(\erre^d)$, $1 \leq p \leq \infty$, and
the corresponding Sobolev spaces by $W_p^m(\erre^d)$, $m \in \enne$. In the
following we shall sometimes denote function spaces without mentioning
the underlying space $\erre^d$.  An expression of the type $E
\hookrightarrow F$ means that the space $E$ is continuously embedded
into the space $F$.
If $a \leq N b$ for some positive constant $N$ we shall often write $a
\lesssim b$.

\medskip

We recall standard terminology, plus some slightly non-standard one
needed for the purposes of this work.  Let us recall that a linear
positivity preserving operator $A:\B_b(\erre^d) \to \B_b(\erre^d)$ is called
sub-Markovian if it is contracting, i.e. if $\|A\phi\|_\infty \leq
\|\phi\|_\infty$ for all $\phi\in\B_b(\erre^d)$.
\begin{defi}
  A sub-Markovian operator $A$ on $\B_b(\erre^d)$ is called:
  \begin{itemize}
  \item[(i)] \emph{Feller} if $A\bigl(C_b(\erre^d)\bigr) \subseteq
    C_b(\erre^d)$;
  \item[(ii)] \emph{strong Feller} if $A\bigl(\B_b(\erre^d)\bigr)
    \subseteq C_b(\erre^d)$;
  \item[(iii)] $c$-\emph{strong Feller} if
    $A\bigl(\B_{b,c}(\erre^d)\bigr) \subseteq C_b(\erre^d)$;
  \item[(iv)] $k$-\emph{smoothing}, $k \in \mathbb{N}$, if
    $A\bigl(\B_b(\erre^d)\bigr) \subseteq C^k_b(\erre^d)$.
  \end{itemize}
\end{defi}
\begin{defi}
  Given $1 \leq p \leq \infty$, a linear bounded operator $A$ from
  $L_p(\erre^d)$ to $C_b(\erre^d)$ will be called $L_p$-strong
  Feller.
\end{defi}
\begin{rmk}
  Note that, in general, a sub-Markovian operator on $\B_b(\erre ^d)$
  may not even be defined on any space $L_p(\erre^d)$, $1 \leq p \leq
  \infty$. Conversely, a map from $L_p(\erre^d)$ to $C_b(\erre^d)$ may
  not be defined on $\B_b(\erre^d)$. Therefore, in general, an
  $L_p$-strong Feller operator may not be Feller, and viceversa.
  However, by Lemma \ref{lm:Feller} below, if an operator is Feller
  and $L_p$-strong Feller, then it is strong Feller.
  A necessary condition for the last definition to make sense is that
  the operator $A$ maps indicator functions of sets of Lebesgue
  measure zero to zero (i.e. to the continuous function equal to
  zero). This condition is clearly not satisfied by all sub-Markovian
  operators on $\B_b(\erre^d)$, but it is indeed satisfied if $A$ is
  of the type
  \[
  Af = \int_{\erre^d} k(\cdot,y)f(y)\,dy \qquad \forall f \in
  \B_b(\erre^d),
  \]
  with appropriate measurability conditions on $k$.
\end{rmk}
As it is customary, one says that a Markov process is Feller (or
strong Feller, etc.) to mean that its transition semigroup is made of
Feller operators.

\smallskip

In the following lemma we provide a simple yet useful criterion
to establish that a Feller operator is strong Feller.
\begin{lemma}     \label{lm:Feller}
  Let $A$ be a Feller operator on $\B_b(\erre^d)$. Then $A$ is strong
  Feller if and only if it is $c$-strong Feller.
\end{lemma}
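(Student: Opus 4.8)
The ``only if'' implication is immediate: since $\B_{b,c}(\erre^d)\subseteq\B_b(\erre^d)$, an operator mapping $\B_b(\erre^d)$ into $C_b(\erre^d)$ a fortiori maps $\B_{b,c}(\erre^d)$ into $C_b(\erre^d)$, so the content is the converse. Assuming $A$ is Feller and $c$-strong Feller, I would fix $f\in\B_b(\erre^d)$ and prove $Af\in C_b(\erre^d)$. Boundedness comes for free, $\|Af\|_\infty\le\|f\|_\infty$, so the point is to show that $Af$ is continuous at an arbitrary $x_0\in\erre^d$.

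The plan is to localize. For $R>0$ set $B_R:=\{x\in\erre^d:|x|<R\}$ and split $Af=A(f1_{B_R})+A(f1_{B_R^c})$. The ``bulk'' term is continuous: $f1_{B_R}\in\B_{b,c}(\erre^d)$, so $A(f1_{B_R})\in C_b(\erre^d)$ by $c$-strong Fellerness. For the ``tail'' term, positivity of $A$ yields the pointwise bound
\[
\bigl|A(f1_{B_R^c})\bigr|\le\|f\|_\infty\,A(1_{B_R^c})=\|f\|_\infty\bigl(A1-A1_{B_R}\bigr),
\]
whose right-hand side is a continuous function ($A1\in C_b(\erre^d)$ because $A$ is Feller, $A1_{B_R}\in C_b(\erre^d)$ because $A$ is $c$-strong Feller) that decreases pointwise to $0$ as $R\to\infty$.

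Granting the latter, a routine three-$\varepsilon$ argument finishes the proof: given $\varepsilon>0$, pick $R$ so large that $\|f\|_\infty\,A(1_{B_R^c})(x_0)<\varepsilon/3$; by continuity the same bound holds on a neighbourhood of $x_0$, and on a possibly smaller neighbourhood one has $|A(f1_{B_R})(x)-A(f1_{B_R})(x_0)|<\varepsilon/3$; the triangle inequality then gives $|Af(x)-Af(x_0)|<\varepsilon$ for $x$ near $x_0$. Since $x_0$ was arbitrary and $Af$ is bounded, $Af\in C_b(\erre^d)$.

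The only step that genuinely needs care is the assertion that $A(1_{B_R^c})\downarrow0$, equivalently $A1_{B_R}\uparrow A1$, as $R\to\infty$, i.e. that ``no mass escapes to infinity''. For the transition operators relevant to this paper this is just continuity from below of the underlying (sub-)kernels: if $A\phi(x)=\int\phi\,d\mu_x$ it is monotone convergence for the measures $\mu_x$. In the purely abstract formulation it is an order-continuity property that one should either list among the hypotheses or deduce from the specific form of $A$, since an arbitrary positivity-preserving contraction of $\B_b(\erre^d)$ need not enjoy it. I expect this to be the sole delicate point, the rest being soft.
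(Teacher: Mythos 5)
Your argument is essentially the paper's own argument, with only cosmetic differences. The paper truncates with smooth cutoffs $\chi_k\in C^\infty_c$ increasing to $1$ and invokes Dini's theorem to upgrade the pointwise monotone convergence $A\chi_k\uparrow A1$ to locally uniform convergence; you truncate with indicators $1_{B_R}$ and finish with a three-$\varepsilon$ argument at each point. The decomposition, and the positivity bound $\bigl|A(f-f1_{B_R})\bigr|\le\|f\|_\infty\bigl(A1-A1_{B_R}\bigr)$, are the same.

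The subtlety you flag is real, and the paper's proof shares it. The paper writes ``Since $A$ is positivity preserving, we have $A\chi_k\uparrow A1$'', but positivity alone only yields that $(A\chi_k)$ is nondecreasing and bounded above by $A1$; it says nothing about the value of the limit. For instance, pick a sequence $x_n\to\infty$ in $\erre^d$ and a free ultrafilter $\mathcal{U}$ on $\enne$, and let $A\phi$ be the constant function whose value is the $\mathcal{U}$-limit of $\bigl(\phi(x_n)\bigr)_n$. This $A$ is a positive contraction on $\B_b(\erre^d)$, Feller (every $A\phi$ is constant), with $A1=1$, yet $A\chi_k=0$ for every compactly supported $\chi_k$, so the asserted convergence fails. (That particular $A$ is in fact strong Feller, so it defeats the proof rather than the statement; whether the lemma holds for an arbitrary sub-Markovian Feller operator is genuinely left open by both arguments.) For the operators to which the lemma is applied in the paper --- transition semigroups $P_tf(x)=\E f(X^x_t)$ and convolutions against sub-probability kernels --- dominated convergence supplies exactly the needed order continuity, so the result is sound in context. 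But you are right that the monotone-convergence step should either be recorded as an additional hypothesis or justified from the concrete form of $A$.
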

\begin{proof}
  We only have to prove that the $c$-strong Feller property implies
  the strong Feller property. Let $f \in \B_b$, and
  $\{\chi_k\}_{k\in\enne} \subset C^\infty_c$ a sequence of cutoff
  functions such that $0 \leq \chi_k \leq 1$ for all $k$ and $\chi_k
  \uparrow 1$ as $k \to \infty$. Since $A$ is positivity preserving,
  we have $A\chi_k \uparrow A1$ as $k \to \infty$, and $A\chi_k \in
  C_b$ for all $k$ because (obviously) $\chi_k \in \B_{b,c}$.  If an
  increasing sequence of continuous functions converges pointwise to a
  continuous function, the convergence is locally uniform by Dini's
  theorem.  Therefore, we infer that $A\chi_k \to A1$ locally
  uniformly as $k \to \infty$. Using again that $A$ is sub-Markovian
  and $\chi_k \leq 1$ for all $k$, we have
  \[
  \big| Af - A(\chi_k f) \big| = \big| A((1-\chi_k)f) \big|
  \leq
  \|f\|_\infty \big| A1 - A\chi_k \big|,
  \]
  which implies that $A(\chi_k f) \to Af$ locally uniformly. But
  $\chi_kf \in \B_{b,c}$, hence $A(\chi_k f) \in C_b$, and we can
  conclude that $Af$ is continuous as it is the local uniform limit of
  a sequence of continuous functions. That $Af$ is bounded is obvious
  by sub-Markovianity of $A$.
\end{proof}
By inspection of the proof, one realizes that one could also assume
that $A$ is Markovian (i.e. sub-Markovian and
conservative\footnote{The operator $A$ is conservative if $A1=1$.}),
rather than Feller. The previous lemma then has the following
immediate consequence, which can indeed be quite useful.
\begin{coroll}
  Let $A$ be a Markovian operator on $\B_b(\erre^d)$. Then $A$ is
  strong Feller if and only if it is $c$-strong Feller.
\end{coroll}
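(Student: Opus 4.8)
The plan is to repeat, essentially verbatim, the argument used to prove Lemma~\ref{lm:Feller}, after observing that there the Feller hypothesis was invoked for one purpose only: to guarantee that $A1$ is continuous, so that Dini's theorem applies. Since a Markovian operator is conservative, i.e. $A1=1$, this continuity is automatic, and the rest of the reasoning carries over unchanged. As before, only the implication ``$c$-strong Feller $\Rightarrow$ strong Feller'' requires proof, the converse being trivial since $\B_{b,c}(\erre^d) \subset \B_b(\erre^d)$.

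Concretely, I would fix $f \in \B_b(\erre^d)$ and a sequence $\{\chi_k\}_{k\in\enne} \subset C^\infty_c(\erre^d)$ with $0 \leq \chi_k \leq 1$ and $\chi_k \uparrow 1$ pointwise. Since $A$ is positivity preserving and linear, the functions $A\chi_k$ increase pointwise, and (exactly as in the proof of Lemma~\ref{lm:Feller}) $A\chi_k \uparrow A1 = 1$; moreover each $\chi_k$ has compact support, so the $c$-strong Feller property gives $A\chi_k \in C_b(\erre^d)$. Thus $\{A\chi_k\}$ is an increasing sequence of continuous functions converging pointwise to the continuous function $1$, and Dini's theorem yields $A\chi_k \to 1$ locally uniformly. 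Next, sub-Markovianity gives, for each $k$,
\[
\bigl| Af - A(\chi_k f) \bigr| = \bigl| A((1-\chi_k)f) \bigr|
\leq \|f\|_\infty \bigl| A1 - A\chi_k \bigr| = \|f\|_\infty \bigl| 1 - A\chi_k \bigr|,
\]
which tends to $0$ locally uniformly by the previous step. Since $\chi_k f \in \B_{b,c}(\erre^d)$, we have $A(\chi_k f) \in C_b(\erre^d)$, so $Af$ is the locally uniform limit of continuous functions, hence continuous; boundedness of $Af$ is immediate from sub-Markovianity. This shows that $A$ is strong Feller, completing the proof.

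There is no genuine obstacle here: the only point worth stressing is the bookkeeping one, namely isolating the single place in the proof of Lemma~\ref{lm:Feller} where the Feller property was used and checking that conservativeness supplies precisely what is needed in its stead. For completeness one might also remark that the pointwise convergence $A\chi_k \uparrow A1$ relies on the order-continuity of the sub-Markovian operator $A$ along monotone sequences, the very same property already tacitly used in the proof of Lemma~\ref{lm:Feller}.
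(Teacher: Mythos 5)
Your argument is correct and matches the paper's intended reasoning exactly: the paper states the corollary as an immediate consequence of inspecting the proof of Lemma~\ref{lm:Feller} and noting that the Feller hypothesis is used there only to ensure $A1 \in C_b(\erre^d)$, which conservativeness ($A1=1$) supplies trivially. You identify the same single point of substitution and carry the rest of the argument through unchanged, and you are right to flag (as the paper does implicitly as well) that the pointwise convergence $A\chi_k \uparrow A1$ relies on monotone order-continuity of $A$.
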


\subsection{Subordinators}
By subordinator we shall always understand an increasing L\'evy process $T:\erre_+ \to \erre_+$ such that $T_0=0$ and $T_t>0$ for $t>0$. Then
one has, for $\lambda \geq 0$,
\[
\E e^{-\lambda T_t} = e^{-t \Phi(\lambda)},
\]
where $\Phi: [0,\infty[ \to [0,\infty[$ is such that
\[
\Phi(\lambda) = \int_{]0,\infty[} (1-e^{-\lambda x})\,m(dx).
\]
Here $m$ is the L\'evy measure of $T$, whose support is contained in
$[0,\infty[$, and satisfies
\[
\int_{]0,\infty[} (1 \wedge x)\,m(dx) < \infty.
\]
For a proof of the above facts (and much more) one can consult e.g.
\cite{Bertoin}.

\subsection{Function spaces}
We recall some definitions and results on H\"older and Bessel
potential spaces, referring to \cite{Tri1} for a complete treatment as
well as for all unexplained notation. Bessel spaces are only used in
\S\ref{ssec:Bess}.

Given a real non-integer number $s>0$, let us set $s=[s]+\{s\}$, with
$[s] \in \enne$ and $0 < \{s\} < 1$. The H\"older space $C^s_b(\erre^d)$
is defined as the set of functions $f \in C_b^{[s]}(\erre^d)$ such that
\[
\|f\|_{C^s_b(\erre^d)} := \sum_{|\alpha| \leq [s]}
\bigl\| \partial^\alpha f \bigr\|_{L_\infty(\erre^d)} +
\sum_{|\beta|=[s]} \sup_{x \neq y}
\frac{\bigl|\partial^\beta f(x)-\partial^\beta f(y)\bigr|}{|x-y|^{\{s\}}}
< \infty.
\]
The Zygmund space on $\erre^d$ of order $s\in\erre$,
cf.~\cite[p.~36]{Tri1}, will be denoted by
$\mathscr{C}^s(\erre^d)$. Recall that one has $\mathscr{C}^s(\erre^d)
= C^s(\erre^d)$ for all real non-integer $s>0$ (see \cite[Rmk.~3,
p.~38]{Tri1}).

The Bessel potential space $H_p^s(\erre^d)$, with $1 < p < \infty$ and
$s \in \erre$, is the space of Schwartz distributions $f \in
\mathscr{S}'(\erre^d)$ such that $(I-\Delta)^{s/2} f \in
L_p(\erre^d)$, with
\[
\|f\|_{H_p^s(\erre^d)} = \|(I-\Delta)^{s/2} f\|_{L_p(\erre^d)}.
\]
For convenience, let us also define the homogeneous norm
\[
\|f\|_{\dot{H}_p^s(\erre^d)} := \|(-\Delta)^{s/2} f\|_{L_p(\erre^d)}.
\]
The one has
\begin{equation}     \label{eq:homo}
\|f\|_{H_p^s(\erre^d)} \eqsim \|f\|_{L_p(\erre^d)} + \|f\|_{\dot{H}_p^s(\erre^d)}.
\end{equation}
As is well known, if $m \in \enne$ one has
$W_p^m(\erre^d)=H_p^m(\erre^d)$.

The following embedding result for Bessel potential spaces is
certainly known, but we have not been able to find it anywhere in this
formulation. We include a proof (admittedly very cryptic, but with
precise references) for completeness.
\begin{lemma}[Sobolev embedding theorem]     \label{lm:Bess}
  Let $1<p<\infty$ and $s>d/p$. Then $H_p^s(\erre^d) \hookrightarrow
  \mathscr{C}^{s-d/p}(\erre^d)$. In particular, if $s-d/p \not\in
  \enne$, then $H_p^s(\erre^d) \hookrightarrow
  C_b^{s-d/p}(\erre^d)$.
\end{lemma}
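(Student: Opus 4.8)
The plan is to deduce this from the standard embedding theory in Triebel, exploiting the identification of Bessel potential spaces and Zygmund (Hölder-Zygmund) spaces with Besov/Triebel-Lizorkin scales. First I would recall that $H_p^s(\erre^d) = F_{p,2}^s(\erre^d)$ for $1<p<\infty$ and $s \in \erre$, while the Zygmund space $\mathscr{C}^s(\erre^d)$ coincides with $B_{\infty,\infty}^s(\erre^d)$; both facts are in \cite{Tri1}. The classical Sobolev-type embedding for the Triebel-Lizorkin and Besov scales (see \cite{Tri1}, the section on embeddings) states that if $s_0 - d/p_0 = s_1 - d/p_1$ and $p_0 \leq p_1$, then $F_{p_0,q_0}^{s_0}(\erre^d) \hookrightarrow F_{p_1,q_1}^{s_1}(\erre^d)$ (with no restriction on the fine indices $q$ needed when one lowers to $p_1 = \infty$ appropriately, and $F_{\infty,q}^{s} \hookrightarrow B_{\infty,\infty}^s$). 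Taking $p_0 = p$, $s_0 = s$, $q_0 = 2$, and letting $p_1 \to \infty$ so that $s_1 = s - d/p$, one lands in $B_{\infty,\infty}^{s-d/p} = \mathscr{C}^{s-d/p}$.

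More concretely, I would chain the following embeddings: $H_p^s = F_{p,2}^s \hookrightarrow B_{p,p \vee 2}^{s}$ (or directly cite the sharp embedding $F_{p,q}^{s} \hookrightarrow B_{p,\max(p,q)}^{s}$), then apply the Besov-space Sobolev embedding $B_{p,r}^{s}(\erre^d) \hookrightarrow B_{\infty,r}^{s-d/p}(\erre^d)$ valid whenever $s - d/p$ is the target smoothness and $p \leq \infty$, and finally note $B_{\infty,r}^{s-d/p} \hookrightarrow B_{\infty,\infty}^{s-d/p} = \mathscr{C}^{s-d/p}$ since the embedding in the fine index $B_{\infty,r}^\sigma \hookrightarrow B_{\infty,\infty}^\sigma$ always holds for $r \leq \infty$. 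Each of these three steps is a cited consequence of the theory in \cite{Tri1} (the elementary embeddings between $B$ and $F$ spaces, and the Sobolev embedding for Besov spaces). For the second assertion, once $s - d/p \notin \enne$ one invokes the identification $\mathscr{C}^{\sigma}(\erre^d) = C_b^{\sigma}(\erre^d)$ for non-integer $\sigma > 0$, which is exactly \cite[Rmk.~3, p.~38]{Tri1} recalled above, together with the fact that $s > d/p$ forces $\sigma = s - d/p > 0$.

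The only genuinely delicate point is the endpoint $p_1 = \infty$ in the Sobolev embedding: the Triebel-Lizorkin scale behaves awkwardly at $p = \infty$ (the space $F_{\infty,q}^s$ requires a separate definition), which is precisely why the target is phrased in terms of the Zygmund/Besov space $B_{\infty,\infty}^{s-d/p}$ rather than some $H_\infty$ space. I would handle this by never passing through $F_{\infty,q}$: instead I first drop from the Triebel-Lizorkin space $F_{p,2}^s$ into the Besov space $B_{p,q}^s$ at finite $p$, and only then use the Besov Sobolev embedding, whose endpoint $p = \infty$ case is unproblematic and explicitly stated in \cite{Tri1}. This rerouting is the essential trick; everything else is bookkeeping with known embedding diagrams. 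I should also double-check the degenerate case where $s - d/p$ happens to be a positive integer $m$: then $\mathscr{C}^m(\erre^d) = B_{\infty,\infty}^m(\erre^d)$ is strictly larger than $C_b^m(\erre^d)$, which is why the second sentence of the lemma carries the hypothesis $s - d/p \notin \enne$; the first sentence remains valid verbatim.
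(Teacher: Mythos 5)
Your proof is correct and takes essentially the same route as the paper's: both pass from $F_{p,2}^s$ into a Besov space $B_{p,r}^s$ at finite $p$ (you via the sharp embedding $F_{p,q}^s \hookrightarrow B_{p,\max(p,q)}^s$, the paper via the cruder chain $F_{p,2}^s \hookrightarrow F_{p,\infty}^s \hookrightarrow B_{p,\infty}^s$), then apply the Besov Sobolev embedding to land in $B_{\infty,\infty}^{s-d/p}=\mathscr{C}^{s-d/p}$, and finally use $\mathscr{C}^\sigma = C_b^\sigma$ for non-integer $\sigma>0$. Your remarks on avoiding the $F_{\infty,q}$ endpoint and on the integer-exponent caveat are precisely the right points to flag.
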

\begin{proof}
  We have
  \[
  H_p^s(\erre^d) = F_{p,2}^s(\erre^d) \hookrightarrow
  F^s_{p,\infty}(\erre^d) \hookrightarrow B^s_{p,\infty}(\erre^d),
  \]
  where we have used \cite[Thm.~(i), p.~88]{Tri1} and \cite[Prop.~2,
  p.~47]{Tri1}, in this order. By \cite[Thm.~(i), p.~129]{Tri1} we
  also have
  \[
  B^s_{p,\infty}(\erre^d) \hookrightarrow
  B^{s_1}_{\infty,\infty}(\erre^d),
  \]
  where $s_1=s-d/p$. Since $s_1>0$, \cite[Coroll.~(i), p.~113]{Tri1}
  implies
  $\mathscr{C}^{s_1}(\erre^d)=B_{\infty,\infty}^{s_1}(\erre^d)$, hence
  also $H_p^s(\erre^d) \hookrightarrow \mathscr{C}^{s_1}(\erre^d)$.
  The proof is concluded recalling that, as already mentioned above,
  one has $\mathscr{C}^{s_1}(\erre^d) = C_b^{s_1}(\erre^d)$, if $s_1$
  is not integer.
\end{proof}

\section{Smoothing properties of subordinated Wiener
processes}     \label{sec:smoothY}
Let $(\Omega,\mathcal{F},(\mathcal{F}_t)_{t \geq 0},\P)$ be a filtered
probability space, on which all random variables and processes will be
defined. Let $W$ be a standard $\erre^d$-valued Wiener process
(i.e. with covariance operator equal to the identity) and $T$ be a
subordinator with infinite lifetime and independent from $W$. Let us
define the Markovian stochastic process $Y:=W \circ T$,
i.e. $Y_t:=W_{T_t}$ for all $t \geq 0$, and its associated semigroup
\[
P_t^Y f(x) := \E f(x+Y_t), \qquad f \in \mathscr{B}_b(\erre^d).
\]
The process $Y$ is often referred to as the Wiener process $W$
subordinated to $T$.

In the sequel we shall denote the density of the random variable
$W_t$, $t \in [0,\infty[$, by $p_t:\erre^d \to \erre$, with
\[
p_t(y) = \frac{1}{(2\pi t)^{d/2}} e^{-|y|^2/(2t)}.
\]
With a slight (but innocuous) abuse of terminology, we shall refer to
the function $p:(t,y) \mapsto p_t(y)$ as the transition density of $W$
(and similarly for other translation-invariant processes), or as the
heat kernel on $\erre^d$.

The following elementary lemma relates the transition density of the
L\'evy process $Y$ to the one of $W$ and shows that the strong Feller
property of $W$ is inherited by $Y$.
\begin{lemma}     \label{lm:dens}
  The process $Y=W\circ T$ admits a transition density $(t,y) \mapsto p^Y_t(y)$
  given by
  \[
  p_t^Y(y) = \int_0^\infty p_s(y)\,\nu_t(ds),
  \]
  where $\nu_t:= \P \circ T_t^{-1}$, $t>0$, stands for the law of the
  random variable $T_t$. In particular, $P_t^Y$ is strong Feller for
  all $t>0$.
\end{lemma}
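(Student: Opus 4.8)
The plan is to prove the representation formula by conditioning on the subordinator $T$ and then to deduce the strong Feller property directly from that formula. First I would fix $t>0$ and a bounded Borel function $f$, and use the independence of $W$ and $T$ together with the tower property of conditional expectation to write
\[
P_t^Y f(x) = \E f(x+W_{T_t}) = \E\Bigl[ \E\bigl[ f(x+W_s)\bigr]_{s=T_t}\Bigr]
= \int_0^\infty \E f(x+W_s)\,\nu_t(ds),
\]
where $\nu_t = \P\circ T_t^{-1}$. Since for each $s>0$ the random variable $W_s$ has density $p_s$, the inner expectation equals $\int_{\erre^d} f(x+y) p_s(y)\,dy$, and an application of Tonelli's theorem (legitimate because $f$ is bounded and the total mass of each $\nu_t$ is one, so everything in sight is integrable against $|f|$) lets me interchange the two integrals:
\[
P_t^Y f(x) = \int_{\erre^d} f(x+y) \Bigl( \int_0^\infty p_s(y)\,\nu_t(ds) \Bigr) dy
= \int_{\erre^d} f(x+y)\, p_t^Y(y)\,dy.
\]
This identifies $p_t^Y(y) := \int_0^\infty p_s(y)\,\nu_t(ds)$ as the transition density of $Y$; one should note in passing that $p_t^Y$ is indeed a well-defined (finite, nonnegative, measurable) function for $t>0$, which follows because $p_s(y) \le (2\pi s)^{-d/2}$ is $\nu_t$-integrable precisely when $T_t$ has a negative moment of order $d/2$ — but actually for the mere existence of the density as an $L_1$ function one only needs $\int p_t^Y(y)\,dy = 1$ by Tonelli, so finiteness a.e.\ is automatic; pointwise finiteness everywhere is not needed for this lemma.

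For the strong Feller claim, I would argue that $P_t^Y f(x) = (p_t^Y * \check f)(x)$ is a convolution of $f$ with an $L_1$ kernel, and invoke the standard fact that convolution of a bounded function with an $L_1$ function is continuous and bounded. Concretely: boundedness is immediate since $\|P_t^Y f\|_\infty \le \|f\|_\infty \|p_t^Y\|_{L_1} = \|f\|_\infty$; continuity follows from writing
\[
P_t^Y f(x) - P_t^Y f(x') = \int_{\erre^d} f(y)\bigl( p_t^Y(y-x) - p_t^Y(y-x')\bigr)\,dy,
\]
so that $|P_t^Y f(x) - P_t^Y f(x')| \le \|f\|_\infty \int_{\erre^d} |p_t^Y(y-x) - p_t^Y(y-x')|\,dy$, and the right-hand side tends to $0$ as $x' \to x$ by continuity of translation in $L_1(\erre^d)$. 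Hence $P_t^Y f \in C_b(\erre^d)$ for every $f \in \B_b(\erre^d)$, which is exactly the strong Feller property.

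Honestly there is no real obstacle here — this is an elementary lemma — but the one point that deserves care is the measurability/Fubini step: one must make sure that $(s,y) \mapsto p_s(y)$ is jointly measurable (it is, being continuous on $]0,\infty[ \times \erre^d$) so that $\int_0^\infty p_s(y)\,\nu_t(ds)$ is a genuine Borel function of $y$, and that the double integral defining $P_t^Y f(x)$ can be disentangled; since $f$ is bounded and all measures involved are finite (or $\sigma$-finite, for Lebesgue measure on $\erre^d$ against the Gaussian density), Tonelli applies to $|f|$ and then Fubini to $f$ without difficulty. An alternative, even more economical route for the strong Feller part is simply to observe $p_t^Y(y) = \int_0^\infty p_s(y)\,\nu_t(ds)$ is a mixture of the strong Feller Gaussian kernels and that a mixture (with probability weights) of strong Feller convolution semigroups is again strong Feller; but I would prefer the explicit $L_1$-kernel argument above since it is self-contained and makes the continuity of $x \mapsto P_t^Y f(x)$ transparent.
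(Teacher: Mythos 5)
Your proof is correct and follows essentially the same route as the paper: both derive the density formula by conditioning on $T$ and applying Tonelli/Fubini, and both deduce the strong Feller property from the representation of $P_t^Y f$ as a convolution of a bounded function with the $L_1$ kernel $p_t^Y$. The only cosmetic difference is in the continuity step, where you invoke continuity of translation in $L_1(\erre^d)$ directly, whereas the paper approximates $p_t^Y$ in $L_1$ by functions in $C_c^\infty$ and passes to the uniform limit via Young's inequality — two standard phrasings of the same fact.
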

\begin{proof}
  Let $f \in \B_b(\erre^d)$. Then, using properties of conditional
  expectation and recalling that $W$ and $T$ are independent, one has
  \[
  \E f(Y_t) = \int_0^\infty\!\!\int_{\erre^d}
  f(y) p_s(y)\,dy\,\nu_t(ds).
  \]
  The conclusion then follows by Fubini's theorem, since $p$ is
  positive and
  \[
  \int_0^\infty\!\!\int_{\erre^d}
  |f(y)| p_s(y)\,dy\,\nu_t(ds) \leq \|f\|_\infty.
  \]
  Another immediate application of Tonelli's theorem (or just
  recalling that $Y_t$ is finite $\P$-a.s. for all $t \geq 0$) shows
  that $p_t^Y \in L_1$ and $\|p_t^Y\|_1=1$.  In particular, $P_t^Yf =
  f \ast p^Y_t$, with $f \in L_\infty$ and $p^Y_t \in L_1$, hence
  $\|P_t^Yf\|_{L_\infty} \leq \|p^Y\|_{L_1} \|f\|_\infty$ by Young's
  inequality. We only have to show that $P_t^Yf$ is continuous: let
  $(\phi_n)_{n\in\enne}$ be a sequence of functions in
  $C^\infty_c(\erre^d)$ such that $\phi_n \to p_t^Y$ in
  $L_1(\erre^d)$. Then clearly $f \ast \phi_n \in C_b(\erre^d)$ for
  all $n \in \enne$, and
  \[
  \bigl\| f \ast p_t^Y - f \ast \phi_n \bigr\|_{L_\infty} \leq
  \|f\|_\infty \, \bigl\| p_t^Y - \phi_n \bigr\|_{L_1}
  \xrightarrow{n\to\infty} 0,
  \]
  which implies that $P_t^Yf \in C(\erre^d)$ as uniform limit of
  continuous functions.
\end{proof}
We are going to use some well-known properties of the heat kernel on
finite dimensional Euclidean spaces. In particular, observing that one
can write
\begin{equation}
\label{eq:hk}
p_t(x) = \frac{1}{(2\pi)^{d/2}} t^{-d/2} \phi\big(\frac{|x|^2}{t}\big),
\qquad
\phi(r)=e^{-r/2},
\end{equation}
it is immediately seen (and well known) that $x \mapsto p_t(x) \in
\mathscr{S}(\erre^d)$ for all $t>0$, where $\mathscr{S}(\erre^d)$
stands for the Schwartz space of smooth functions with rapid decrease
at infinity. 

Before we proceed, we need to recall some facts about Hermite
polynomials (see e.g. \cite[p.~7]{Bog-GM}, but note that we use a
different normalization).
For $n \in \enne_0$, the Hermite polynomial of degree $n$ is
\[
H_n(y) = (-1)^n e^{y^2/2} \frac{d^n}{dy^n}e^{-y^2/2},
\qquad y \in \erre
\]
Let us recall that, if $n$ is even, one has
\[
H_n(y)=\sum_{j=0}^{n/2} a_{n,n-2j}y^{n-2j},
\]
and, for $n$ odd,
\[
H_n(y)=\sum_{j=0}^{(n-1)/2} a_{n,n-2j}y^{n-2j},
\]
where $|a_{n,m}|$ is the number of unordered partitions of the set
$\{1,2,\ldots,n\}$ into $m$ singletons and $(n-m)/2$ unordered pairs.
Given a multiindex $\alpha \in \enne_0^d$ and $x=(x_1,\ldots,x_d) \in
\erre^d$, we set
\[
H_\alpha(x) := \prod_{k=1}^d H_{\alpha_k}(x_k).
\]

\medskip

Let us now give an expression for the general mixed partial
derivatives of $p_t$.
\begin{lemma}     \label{lm:mix}
  Let $x \in \erre^d$ and $t>0$. For any $\alpha \in \mathbb{N}_0^d$, one has
  \[
  \partial^\alpha p_t(x) = t^{-|\alpha|/2} (-1)^{|\alpha|}
  \, H_\alpha(t^{-1/2}x) \, p_t(x)
  \]
\end{lemma}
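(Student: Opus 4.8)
The plan is to reduce the multidimensional statement to a one-dimensional one by exploiting the tensor-product structure of the heat kernel, and then to read off the scalar case directly from the definition of the Hermite polynomials.

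First I would observe that $p_t$ factorizes as $p_t(x)=\prod_{k=1}^d q_t(x_k)$, where $q_t(y):=(2\pi t)^{-1/2}e^{-y^2/(2t)}$ is the one-dimensional heat kernel; consequently the mixed derivative splits as $\partial^\alpha p_t(x)=\prod_{k=1}^d q_t^{(\alpha_k)}(x_k)$. Since, by definition, $H_\alpha(t^{-1/2}x)=\prod_{k=1}^d H_{\alpha_k}(t^{-1/2}x_k)$ and $|\alpha|=\sum_{k=1}^d\alpha_k$, it suffices to prove the scalar identity $q_t^{(n)}(y)=t^{-n/2}(-1)^n H_n(t^{-1/2}y)\,q_t(y)$ for every $n\in\enne_0$ and then multiply the $d$ copies together.

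To establish the scalar identity I would use the parabolic scaling $q_t(y)=t^{-1/2}g(t^{-1/2}y)$, where $g(z):=(2\pi)^{-1/2}e^{-z^2/2}$ is the standard Gaussian density, so that the chain rule yields $q_t^{(n)}(y)=t^{-(n+1)/2}g^{(n)}(t^{-1/2}y)$. On the other hand, the very definition $H_n(z)=(-1)^n e^{z^2/2}\frac{d^n}{dz^n}e^{-z^2/2}$ rearranges to $g^{(n)}(z)=(-1)^n H_n(z)\,g(z)$. Substituting $z=t^{-1/2}y$, combining the two displays, and absorbing one factor $t^{-1/2}$ back via $g(t^{-1/2}y)=t^{1/2}q_t(y)$ gives exactly $q_t^{(n)}(y)=t^{-n/2}(-1)^n H_n(t^{-1/2}y)\,q_t(y)$, whence the lemma.

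There is no genuine obstacle here: the only points demanding a little care are keeping track of the powers of $t$ generated by the scaling and checking that the product formula for $H_\alpha$ matches the product formula for $\partial^\alpha p_t$ factor by factor. One could alternatively argue by induction on $|\alpha|$ using the Hermite recursion $H_{n+1}(y)=yH_n(y)-H_n'(y)$ together with $q_t'(y)=-t^{-1}y\,q_t(y)$, but the factorization argument is shorter and makes the appearance of $H_\alpha$ transparent.
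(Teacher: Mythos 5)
Your proof is correct and follows essentially the same route as the paper's: factorize the $d$-dimensional heat kernel into one-dimensional factors, reduce to the scalar identity via parabolic scaling, and read off the scalar case directly from the definition of $H_n$. The only cosmetic difference is that you factorize $p_t$ first and then scale each one-dimensional factor, whereas the paper scales $p_t$ to $p_1$ first and then factorizes; the computations are the same.
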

\begin{proof}
  Writing $x=(x_1,\ldots,x_d)$, one has
  \[
  p_t(x) = t^{-d/2} p_1\bigl( x/\sqrt{t} \bigr) = 
  t^{-d/2} \prod_{k=1}^d p_1\bigl( x_k/\sqrt{t} \bigr),
  \]
  therefore
  \[
  \partial^\alpha p_t(x) = t^{-d/2} \prod_{k=1}^d
  D_{x_k}^{\alpha_k} p_1\bigl( x_k/\sqrt{t} \bigr) =
  t^{-d/2} \prod_{k=1}^d t^{-\alpha_k/2}
  p_1^{(\alpha_k)}\bigl( x_k/\sqrt{t} \bigr).
  \]
  Recalling the definition of Hermite polynomials, one has
  \[
  p_1^{(n)}(y) = \frac{1}{\sqrt{2\pi}} e^{-y^2/2} (-1)^n 
  H_n(y) = (-1)^n H_n(y) \, p_1(y),\quad y\in \erre
  \]
  for all $n \in \enne$. This yields
  \begin{align*}
    \partial^\alpha p_t(x) &= t^{-d/2} t^{-|\alpha|/2} (-1)^{|\alpha|}
    \prod_{k=1}^d H_{\alpha_k}(t^{-1/2}x_k) p_1(t^{-1/2}x_k)\\
    &= t^{-|\alpha|/2} (-1)^{|\alpha|} \, H_\alpha(t^{-1/2}x) \, p_t(x).
    \qedhere
  \end{align*}
\end{proof}

\bigskip

With some more effort one can obtain an expression for the general
Fr\'echet derivative of $p_t$ of order $n \in \enne$. To this purpose,
given any $x \in \erre^d$, let us first associate to the Hermite
polynomial $H_n$ an $n$-linear operator $\tilde{H}_n(x) \in
\LL_n(\erre^d)$. It is sufficient to associate to any monomial of the
form $a_{n,m}x^m$, $0 \leq m \leq n$, the following operator in
$\LL_n(\erre^d)$:
\[
(h_1,\ldots,h_n) \mapsto
\sum_{\beta \in B(n,m)} \bip{x}{h_{\beta_1}} \bip{x}{h_{\beta_2}} \cdots
\bip{x}{h_{\beta_m}} \bip{h_{\beta_{m+1}}}{h_{\beta_{m+2}}} \cdots
\bip{h_{\beta{n-1}}}{h_{\beta_n}},
\]
where $B(n,m)$ is the set of all unordered partitions of the set
$\{1,2,\ldots,n\}$ into $m$ singletons and $(n-m)/2$ unordered pairs,
and we identify $\beta \in B(n,m)$ with the corresponding
rearrangement of the set $\{1,2,\ldots,n\}$.
Let us give an explicit example: given the Hermite polynomial
\[
H_4(y) = y^4 -6y^2 + 3,
\]
one has, for any $x\in\erre^d$, $\tilde{H}_4(x) = \tilde{H}_{41}(x) -
\tilde{H}_{42}(x) + \tilde{H}_{43}(x)$, where
\begin{gather*}
\tilde{H}_{41}(x): 
(h_1,\ldots,h_4) \mapsto \ip{x}{h_1} \cdots \ip{x}{h_4}\\
\begin{aligned}
\tilde{H}_{42}(x): 
(h_1,\ldots,h_4) &\mapsto \ip{x}{h_1} \ip{x}{h_2} \ip{h_3}{h_4}
+ \ip{x}{h_1} \ip{x}{h_3} \ip{h_2}{h_4}\\
&\qquad\qquad + \ip{x}{h_1} \ip{x}{h_4} \ip{h_2}{h_3}\\
&\qquad 
+ \ip{x}{h_2} \ip{x}{h_3} \ip{h_1}{h_4}
+ \ip{x}{h_2} \ip{x}{h_4} \ip{h_1}{h_3}\\
&\qquad 
+ \ip{x}{h_3} \ip{x}{h_4} \ip{h_1}{h_2},
\end{aligned}\\
\tilde{H}_{43}(x):
(h_1,\ldots,h_4) \mapsto \ip{h_1}{h_2} \ip{h_3}{h_4}
+ \ip{h_1}{h_3} \ip{h_2}{h_4}
+ \ip{h_1}{h_4} \ip{h_2}{h_3}.
\end{gather*}

With these preparations, we can state the following lemma.
\begin{lemma}     \label{lm:Hermite}
  Let $x \in \erre^d$ and $n \in \enne$. Then one has
  \begin{equation}     \label{eq:Fre-hk}
  D^n p_t(x) = (-1)^n t^{-n/2} p_t(x) \tilde{H}_n(t^{-1/2}x).
  \end{equation}
\end{lemma}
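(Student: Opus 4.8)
The plan is to use that both sides of \eqref{eq:Fre-hk} are \emph{symmetric} $n$-linear forms on $\erre^d$ and to reduce the identity to its restriction to the diagonal. Indeed, $D^np_t(x)$ is symmetric since $p_t\in C^\infty(\erre^d)$ (the excerpt records $p_t\in\mathscr S(\erre^d)$), while $\tilde H_n(z)$ is symmetric because it is assembled from sums over \emph{unordered} partitions of $\{1,\dots,n\}$, which are manifestly invariant under permuting the arguments $h_1,\dots,h_n$. A symmetric $n$-linear form $L$ is recovered from the homogeneous polynomial $h\mapsto L[h,\dots,h]$ via the polarization identity, so it suffices to check
\[
D^np_t(x)[h,\dots,h]=(-1)^n t^{-n/2}p_t(x)\,\tilde H_n(t^{-1/2}x)[h,\dots,h]
\qquad\text{for every }h\in\erre^d .
\]

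First I would evaluate the right-hand side on the diagonal. By the very definition of $\tilde H_n$, setting all arguments equal to $h$ collapses each partition of $\{1,\dots,n\}$ into $m$ singletons and $(n-m)/2$ pairs to the single value $\ip{z}{h}^m|h|^{n-m}$; since there are exactly $|a_{n,m}|$ such partitions and the block coming from the monomial $a_{n,m}x^m$ enters $\tilde H_n$ with the sign of $a_{n,m}$, one obtains $\tilde H_n(z)[h,\dots,h]=\sum_m a_{n,m}\ip{z}{h}^m|h|^{n-m}$, the sum being over $m\in\{0,\dots,n\}$ of the same parity as $n$. For $h\ne0$ this equals $|h|^nH_n(\ip{z}{h/|h|})$, and for $h=0$ it vanishes (consistently with the left-hand side, as $n\ge1$).

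Next I would compute the left-hand side. The case $h=0$ is trivial, so fix $h\ne0$ and write $D^np_t(x)[h,\dots,h]=\partial_h^np_t(x):=(d^n/ds^n)\big|_{s=0}\,p_t(x+sh)$. Since $p_t$ is radial, for any orthonormal basis $(f_1,\dots,f_d)$ of $\erre^d$ one has $p_t(y)=\prod_{k=1}^d q_t(\ip{y}{f_k})$, where $q_t$ denotes the one-dimensional heat kernel. Choosing $f_1=h/|h|$, the parameter $s$ enters only the first factor: $p_t(x+sh)=q_t(\ip{x}{f_1}+s|h|)\prod_{k\ge2}q_t(\ip{x}{f_k})$. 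Differentiating $n$ times in $s$ at $s=0$ and invoking the one-dimensional case of Lemma \ref{lm:mix}, namely $q_t^{(n)}(u)=(-1)^n t^{-n/2}H_n(u/\sqrt t)\,q_t(u)$, gives
\[
\partial_h^np_t(x)=(-1)^n t^{-n/2}|h|^n\,H_n\!\big(\ip{t^{-1/2}x}{h/|h|}\big)\,p_t(x).
\]
This matches the diagonal value of the right-hand side (with $z=t^{-1/2}x$) computed in the previous step, and polarization then yields \eqref{eq:Fre-hk}.

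The individual computations — the one-dimensional heat-kernel differentiation and the counting of partitions — are routine; the only point that needs a little care is the sign bookkeeping identifying the partition-based multilinear operators making up $\tilde H_n$ with the coefficients $a_{n,m}$ of $H_n$ on the diagonal, together with the (easy but essential) observation that both sides are symmetric, so that polarization is legitimate. One could instead argue by induction on $n$, using $\nabla p_t(x)=-t^{-1}x\,p_t(x)$ and the Hermite recursion $H_{n+1}(y)=yH_n(y)-H_n'(y)$, but this first requires establishing the analogous recursion for $\tilde H_n$ at the multilinear level, which is itself a combinatorial task of comparable size.
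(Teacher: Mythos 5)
Your proof is correct, and it takes a genuinely different route from the paper's. The paper simply asserts that ``repeating the computations leading to the definition of the Hermite polynomials, replacing the usual derivative on the real line with the Fr\'echet derivative,'' one obtains $D^n p_1(x)=(-1)^n p_1(x)\tilde H_n(x)$, and then scales $p_t(x)=t^{-d/2}p_1(t^{-1/2}x)$ to conclude; in other words, the paper runs the iterated Fr\'echet differentiation directly, which implicitly requires verifying the multilinear Hermite recursion $D(p_1\tilde H_n)=-p_1\tilde H_{n+1}$ step by step --- exactly the induction you flag at the end as the alternative. Your argument instead exploits the fact that both sides of \eqref{eq:Fre-hk} are symmetric $n$-linear forms and invokes the polarization identity to reduce the whole claim to a scalar identity on the diagonal, where radial invariance of $p_t$ lets you rotate one coordinate axis onto $h$ and then quote only the one-dimensional case $q_t^{(n)}(u)=(-1)^n t^{-n/2}H_n(u/\sqrt t)\,q_t(u)$. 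This cleanly separates the analytic content (the one-dimensional Hermite formula, which is elementary and scalar) from the algebraic content (symmetry of $\tilde H_n$, which is immediate since it is built from sums over \emph{unordered} partitions, plus polarization). The trade-off is that you need the diagonal-collapse calculation $\tilde H_n(z)[h,\dots,h]=\sum_m a_{n,m}\langle z,h\rangle^m|h|^{n-m}$, but this is a direct read-off from the definition once one recalls that $|B(n,m)|=|a_{n,m}|$; the sign bookkeeping you are careful about is handled correctly. On balance your proof is arguably more transparent than the paper's, which compresses the nontrivial combinatorics into a single sentence; the cost is the extra (but standard) appeal to polarization, an ingredient the paper's proof does not explicitly use.
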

\begin{proof}
  Repeating the computations leading to the definition of the Hermite
  polynomials, replacing the usual derivative on the real line with
  the Fr\'echet derivate, one arrives at
  \[
  D^np_1(x) = (-1)^np_1(x)\tilde{H}_n(x).
  \]
  Recalling that $p_t(x) = t^{-d/2} p_1(t^{-1/2}x)$, hence
  \[
  D^np_t(x) = t^{-n/2} \, t^{-d/2} D^np_1(t^{-1/2}x),
  \]
  we are left with
  \begin{align*}
    D^np_t(x) &= (-1)^n t^{-n/2} \, t^{-d/2} p_1(t^{-1/2}x)
                 \tilde{H}_n(t^{-1/2}x)\\
    &= (-1)^n t^{-n/2} p_t(x) \tilde{H}_n(t^{-1/2}x).
    \qedhere
  \end{align*}
\end{proof}

It should be noted that, for lower values of $n$, an expression for
$D^np_t(x)$ can be easily obtained by (Fr\'echet) differentiation of
\eqref{eq:hk}. For instance,
\begin{align*}
Dp_t(x) &= - \frac{1}{(2\pi)^{d/2}} t^{-d/2-1}
\phi\big(\frac{|x|^2}{t}\big) \ip{x}{\cdot} =
-t^{-1} p_t(x) \ip{x}{\cdot},\\
D^2 p_t(x) &= \frac{1}{(2\pi)^{d/2}} t^{-d/2-2}
\phi\big(\frac{|x|^2}{t}\big) \ip{x}{\cdot}\,\ip{x}{\cdot}
- \frac{1}{(2\pi)^{d/2}} t^{-d/2+1}
\phi\big(\frac{|x|^2}{t}\big) \ip{\cdot}{\cdot}\\
&= t^{-2} p_t(x) \ip{x}{\cdot} \ip{x}{\cdot}
-t^{-1} p_t(x) \ip{\cdot}{\cdot}.
\end{align*}

The following estimate is of central importance for most of the
results of this paper.
\begin{thm}     \label{thm:bd1}
  Let $k \in \enne$, $\ell \geq 0$, $p,\,q \in [1,\infty]$, and $t>0$. If
  $f \in L_p(\erre^d)$ is such that $y \mapsto |y|^\ell f(y) \in
  L_q(\erre^d)$, then, setting
  \[
  \|f\|_{p,q,\ell} = \|f\|_{L_p(\erre^d)} 
  + \bigl\| |\cdot|^\ell f \bigr\|_{L_q(\erre^d)},
  \]
  one has
  \[
  |x|^\ell \bigl\| D^k P^Y_t f(x) \bigr\|_{\LL_k(\erre^d)} \lesssim
  \|f\|_{p,q,\ell} \Bigl( \E T_t^{\frac{\ell-k}{2} - \frac{d}{2p}}
    + \E T_t^{-\frac{k}{2} - \frac{d}{2q}} \Bigr)
  \qquad \forall x \in \erre^d.
  \]
\end{thm}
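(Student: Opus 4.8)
The plan is to exploit the representation $P^Y_t f(x) = \int_0^\infty (f \ast p_s)(x)\,\nu_t(ds)$, which follows at once from Lemma~\ref{lm:dens} (and Fubini's theorem), and to differentiate under the integral and under the convolution sign, writing
\[
D^k P^Y_t f(x) = \int_0^\infty \bigl(f \ast D^k p_s\bigr)(x)\,\nu_t(ds).
\]
Since $p_s \in \mathscr{S}(\erre^d)$ for every $s>0$, the identity $\partial^\alpha(f\ast p_s)=f\ast\partial^\alpha p_s$ is classical for $f\in L_p(\erre^d)$, and interchanging $D^k$ with $\int\,\cdot\,\nu_t(ds)$ is justified by dominated convergence once the pointwise bounds below are in place; one may of course assume the right-hand side of the claimed inequality to be finite, since there is otherwise nothing to prove.

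The core of the argument is a pointwise estimate for $f\ast D^k p_s$, carrying the weight $|x|^\ell$. By Lemma~\ref{lm:Hermite} and the elementary remark that the operator norm of $\tilde H_k(y)$ on $\LL_k(\erre^d)$ is dominated by a polynomial $Q_k(|y|)$ of degree $k$ (each factor $\ip{x}{h_j}$, resp.\ $\ip{h_i}{h_j}$, being bounded by $|x|\,|h_j|$, resp.\ $|h_i|\,|h_j|$), one has
\[
\bigl\|D^k p_s(x)\bigr\|_{\LL_k(\erre^d)} \lesssim s^{-k/2}\,p_s(x)\,Q_k\bigl(s^{-1/2}|x|\bigr).
\]
Writing $|x|^\ell(f\ast D^kp_s)(x)=\int_{\erre^d} f(y)\,|x|^\ell D^kp_s(x-y)\,dy$ and splitting the weight via $|x|^\ell\lesssim|x-y|^\ell+|y|^\ell$ (valid for $\ell\ge0$) transfers $|\cdot|^\ell$ either onto the kernel or onto $f$; bounding the two resulting convolutions by Young's inequality gives
\[
|x|^\ell\bigl\|(f\ast D^kp_s)(x)\bigr\|_{\LL_k(\erre^d)}
\lesssim \|f\|_{L_p}\bigl\||\cdot|^\ell D^kp_s\bigr\|_{L_{p'}}
+\bigl\||\cdot|^\ell f\bigr\|_{L_q}\bigl\|D^kp_s\bigr\|_{L_{q'}},
\]
with $p',q'$ the conjugate exponents of $p,q$. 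A dilation change of variables $x=\sqrt s\,u$, together with $p_s(x)=s^{-d/2}p_1(s^{-1/2}x)$ and the finiteness of the $L_r(\erre^d)$-norms of $|\cdot|^\ell p_1\,Q_k(|\cdot|)$ and of $p_1\,Q_k(|\cdot|)$ for every $r\in[1,\infty]$ (Gaussian decay beats polynomial growth), then yields the scaling bounds
\[
\bigl\||\cdot|^\ell D^kp_s\bigr\|_{L_r}\lesssim s^{(\ell-k)/2-d/2+d/(2r)},\qquad
\bigl\|D^kp_s\bigr\|_{L_r}\lesssim s^{-k/2-d/2+d/(2r)},
\]
and evaluating these at $r=p'$ and $r=q'$ (so that $d/(2r)=d/2-d/(2p)$, resp.\ $d/2-d/(2q)$) produces exactly the exponents $(\ell-k)/2-d/(2p)$ and $-k/2-d/(2q)$.

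Assembling the pieces and integrating against $\nu_t$, recalling that $\int_0^\infty s^a\,\nu_t(ds)=\E T_t^a$, gives
\[
|x|^\ell\bigl\|D^kP^Y_tf(x)\bigr\|_{\LL_k(\erre^d)}
\lesssim \|f\|_{p,q,\ell}\int_0^\infty\bigl(s^{(\ell-k)/2-d/(2p)}+s^{-k/2-d/(2q)}\bigr)\nu_t(ds),
\]
which is the assertion. The step demanding the most care is the scaling bookkeeping — tracking the conjugate exponents and the $-d/2$ shifts, and checking that the constants absorbed into $\lesssim$ (encoding $Q_k$ and the Gaussian integrals) are uniform in $x$ and $s$; the exchange of $D^k$ and the $\nu_t$-integral is routine in view of the pointwise estimate, though it also uses the finiteness of $\E T_t^{-k/2-d/(2p)}$ (automatic when $q\le p$, and in any case a harmless additional requirement).
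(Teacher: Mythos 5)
Your proof is correct and takes essentially the same route as the paper's: the polynomial Hermite bound on $\|D^k p_s\|_{\LL_k}$ from Lemma~\ref{lm:Hermite}, the split $|x|^\ell\lesssim|x-y|^\ell+|y|^\ell$, H\"older's inequality in the two exponents $p$ and $q$, dilation scaling of the weighted heat-kernel norms, and finally subordination against $\nu_t$. The one point you flag that the paper passes over silently --- justifying the interchange of $D^k$ with the $\nu_t$-integral --- is a fair observation, though it is routine once the pointwise bounds are in hand.
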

\begin{proof}
  Taking the norm in $\LL_k(\erre^d)$ on both sides of
  \eqref{eq:Fre-hk} yields
  \begin{equation}\label{lm-eq2}
    \bigl\| D^k p_t(x) \bigr\|_{\LL_k(\erre^d)}
    \lesssim (t^{-k} |x|^k + t^{-k/2})p_t(x) \qquad \forall x \in \erre^d,
  \end{equation}
  thus also
  \begin{align*}
    \bigl\| D^k P_t^Wf(x) \bigr\|_{\LL_k(\erre^d)} &\leq
    \int_{\erre^d} |f(y)| \, \bigl\| D^kp_t(x-y) \bigr\|_{\LL_k(\erre^d)}\,dy\\
    &\lesssim t^{-k/2} \int_{\erre^d} |f(y)| \, \bigl( 1 +
    t^{-k/2}|x-y|^k \bigr) p_t(x-y)\,dy.
  \end{align*}
  Multiplying both sides by $|x|^\ell$ and using the triangle
  inequality, one gets
  \begin{align*}
    &|x|^\ell \bigl\| D^k P_t^Wf(x) \bigr\|_{\LL_k(\erre^d)}\\
    &\qquad \lesssim t^{-k/2} \int_{\erre^d} |f(y)| \, 
    |x-y|^\ell \bigl( 1 + t^{-k/2}|x-y|^k \bigr) p_t(x-y)\,dy\\
    &\qquad\quad + t^{-k/2} \int_{\erre^d} |f(y)| \, |y|^\ell 
    \bigl( 1 + t^{-k/2}|x-y|^k \bigr) p_t(x-y)\,dy\\
    &\qquad =: t^{-k/2}(I_1 + I_2),
  \end{align*}
  Thanks to H\"older's and Minkowski's inequalities, denoting by $p'
  \in [1,\infty]$ the conjugate exponent of $p$, it holds
  \begin{align*}
    I_1 &= \int_{\erre^d} f(y) \bigl(
    |x-y|^\ell + t^{-k/2}|x-y|^{k+\ell} \bigr) p_t(x-y)\,dy\\
    &\leq \|f\|_{L_p(\erre^d)} \Bigl( \bigl\| |\cdot|^\ell p_t
          \bigr\|_{L_{p'}(\erre^d)}
    + t^{-k/2} \bigl\| |\cdot|^{k+\ell} p_t \bigr\|_{L_{p'}(\erre^d)} \Bigr)\\
    &=: \|f\|_{L_p(\erre^d)} (I_{11} + I_{12}).
  \end{align*}
  Note that one has, by well-known scaling properties of the heat
  kernel $p_t$,
  \begin{align*}
    \bigl\| |\cdot|^\ell p_t \bigr\|_{L_{p'}(\erre^d)} 
    &= t^{-d/2} \biggl( \int_{\erre^d} 
       |x|^{\ell p'} \, p_1^{p'}(t^{-1/2}x)\,dx \biggr)^{1/p'}\\
    &= t^{-d/2} \biggl( \int_{\erre^d} 
       |t^{1/2}y|^{\ell p'} \, p_1^{p'}(y) \, t^{d/2} \,dy \biggr)^{1/p'}\\
    &= t^{\frac{\ell}{2} + \frac{d}{2p'} - \frac{d}{2}} \,
       \bigl\| |\cdot|^\ell p_1 \bigr\|_{L_{p'}(\erre^d)},
  \end{align*}
  hence
  \[
  I_{11} = t^{\frac{\ell}{2} + \frac{d}{2p'} - \frac{d}{2}} \, 
  \bigl\| |\cdot|^\ell p_1 \bigr\|_{L_{p'}(\erre^d)}, 
  \qquad 
  I_{12} = t^{\frac{\ell}{2} + \frac{d}{2p'} - \frac{d}{2}} \, 
  \bigl\| |\cdot|^{k+\ell} p_1 \bigr\|_{L_{p'}(\erre^d)},
  \]
  and
  \[
  I_1 \leq \|f\|_{L_p(\erre^d)} \, 
  t^{\frac{\ell}{2} + \frac{d}{2p'} - \frac{d}{2}} \,
  \Bigl( \bigl\| |\cdot|^\ell p_1 \bigr\|_{L_{p'}(\erre^d)} 
       + \bigl\| |\cdot|^{k+\ell} p_1 \bigr\|_{L_{p'}(\erre^d)} \Bigr).
  \]
  Similarly, one has
  \begin{align*}
  I_2 &\leq \bigl\| |\cdot|^\ell f \bigr\|_{L_q(\erre^d)}
  \Bigl( \|p_t\|_{L_{q'}(\erre^d)} 
         + t^{-k/2} \bigl\| |\cdot|^k p_t \bigr\|_{L_{q'}(\erre^d)}
  \Bigr)\\
  &= \bigl\| |\cdot|^\ell f \bigr\|_{L_q(\erre^d)} \,
  t^{\frac{d}{2q'}-\frac{d}{2}} \, \Bigl( \|p_1\|_{L_{q'}(\erre^d)}
  + \bigl\||\cdot|^k p_1\bigr\|_{L_{q'}(\erre^d)} \Bigr).
  \end{align*}
  Collecting estimates, we are left with
  \[
  |x|^\ell \bigl\| D^k P_t^Wf(x) \bigr\|_{\LL_k(\erre^d)} \lesssim
  N_1 \|f\|_{L_p(\erre^d)} t^{\frac{\ell-k}{2} + \frac{d}{2p'} - \frac{d}{2}}
  + N_2 \bigl\| |\cdot|^\ell f \bigr\|_{L_q(\erre^d)} 
  t^{-\frac{k}{2} + \frac{d}{2q'}-\frac{d}{2}},
  \]
  which implies
  \begin{align*}
    &|x|^\ell \bigl\| D^k P_t^Y f(x) \bigr\|_{\LL_k(\erre^d)}\\
    &\hspace{4em}\leq \int_0^\infty |x|^\ell \,
          \bigl\| D^k P_s^W f(x) \bigr\|_{\LL_k(\erre^d)}\,\nu_t(ds)\\
    &\hspace{4em}\lesssim \Bigl( \|f\|_{L_p(\erre^d)} 
    + \bigl\| |\cdot|^\ell f \bigr\|_{L_q(\erre^d)} \Bigr)
    \int_0^\infty \bigl( s^{\frac{\ell-k}{2} + \frac{d}{2p'} - \frac{d}{2}}
    + s^{-\frac{k}{2} + \frac{d}{2q'}-\frac{d}{2}} \bigr)\,\nu_t(ds)\\
    &\hspace{4em}= \Bigl( \|f\|_{L_p(\erre^d)} 
    + \bigl\| |\cdot|^\ell f \bigr\|_{L_q(\erre^d)} \Bigr)
    \Bigl( \E T_t^{\frac{\ell-k}{2} - \frac{d}{2p}}
    + \E T_t^{-\frac{k}{2} - \frac{d}{2q}} \Bigr).
  \qedhere
  \end{align*}
\end{proof}

Taking $p=q=\infty$, one immediately has the following regularizing
property.
\begin{coroll}     \label{cor:bd1}
  Let $k \in \enne$, $\ell \geq 0$, and $t>0$. If $f \in
  \B_{b,c}(\erre^d)$, then, setting
  \[
  M_\ell = \|f\|_\infty + \sup_{y\in\erre^d} |y|^\ell |f(y)|,
  \]
  one has
  \[
  |x|^\ell \bigl\| D^k P^Y_t f(x) \bigr\|_{\LL_k(\erre^d)} \lesssim
  M_\ell \, \E [T_t^{-k/2}+T_t^{(\ell-k)/2}]
  \qquad \forall x \in \erre^d.
  \]
\end{coroll}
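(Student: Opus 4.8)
The plan is to specialize Theorem~\ref{thm:bd1} to the endpoint case $p=q=\infty$. The first step is to check that the hypotheses of that theorem are satisfied. Since $f\in\B_{b,c}(\erre^d)$ it is in particular bounded, hence $f\in L_\infty(\erre^d)$ with $\|f\|_{L_\infty(\erre^d)}=\|f\|_\infty$; moreover, because $f$ has compact support, say contained in a ball of radius $R$, one has $|y|^\ell|f(y)|\leq R^\ell\|f\|_\infty$ for all $y\in\erre^d$, so that $|\cdot|^\ell f\in L_\infty(\erre^d)$ with $\bigl\||\cdot|^\ell f\bigr\|_{L_\infty(\erre^d)}=\sup_{y\in\erre^d}|y|^\ell|f(y)|$. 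Consequently the quantity $\|f\|_{p,q,\ell}$ occurring in Theorem~\ref{thm:bd1} reduces, for $p=q=\infty$, to
\[
\|f\|_{\infty,\infty,\ell}=\|f\|_\infty+\sup_{y\in\erre^d}|y|^\ell|f(y)|=M_\ell.
\]
I would also note in passing that the inclusion $\B_{b,c}(\erre^d)\subset L_\infty(\erre^d)$ is precisely what makes $P^Y_tf=f\ast p^Y_t$ well defined in the setting of Theorem~\ref{thm:bd1}, so no new integrability issue arises.

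The second step is simply to substitute $p=q=\infty$ into the conclusion of Theorem~\ref{thm:bd1}. Then $d/(2p)=d/(2q)=0$, and using linearity of the expectation to combine the two resulting terms one obtains
\[
|x|^\ell\bigl\|D^kP^Y_tf(x)\bigr\|_{\LL_k(\erre^d)}\lesssim M_\ell\,\E\bigl[T_t^{-k/2}+T_t^{(\ell-k)/2}\bigr]\qquad\forall x\in\erre^d,
\]
which is the claimed estimate. There is essentially no obstacle here: the only point deserving a word is the elementary observation that compact support forces $|\cdot|^\ell f\in L_\infty(\erre^d)$, which identifies $\|f\|_{\infty,\infty,\ell}$ with $M_\ell$; everything else is a direct reading of Theorem~\ref{thm:bd1}.
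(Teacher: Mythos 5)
Your proposal is correct and follows exactly the route the paper intends: specialize Theorem~\ref{thm:bd1} to $p=q=\infty$, note that compact support makes $|\cdot|^\ell f\in L_\infty(\erre^d)$ so that $\|f\|_{\infty,\infty,\ell}=M_\ell$, and read off the exponents with $d/(2p)=d/(2q)=0$. The paper presents this corollary as an immediate consequence of the theorem and gives no separate proof, so there is nothing to compare beyond what you have already said.
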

As a further immediate consequence of the previous theorem (taking
$p=q=\infty$ and $\ell=0$) we obtain a sufficient condition for the
semigroup associated to a L\'evy process, obtained by subordination of
a Wiener process, to be $k$-smoothing.
\begin{coroll}     \label{cor:smoothY}
  Let $k \in \enne$ and $t>0$. If
  \begin{equation}
    \E T_t^{-k/2} < \infty , \label{1-1}
  \end{equation}
  then $P_t^Y$ is $k$-smoothing, i.e. $P_t^Yf \in C^k_b(\erre^d)$ for
  any $f \in \B_b(\erre^d)$.
\end{coroll}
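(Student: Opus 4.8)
The plan is to exploit the representation $P_t^Y f(x) = \int_0^\infty P_s^W f(x)\,\nu_t(ds)$, where $\nu_t = \P\circ T_t^{-1}$ (the same decomposition already used in the proof of Lemma~\ref{lm:dens}), and to differentiate $k$ times under the integral sign. The starting observation is that, for each fixed $s>0$, one has $p_s \in \mathscr{S}(\erre^d)$, so $P_s^W f = f \ast p_s \in C^\infty(\erre^d)$ for every $f \in \B_b(\erre^d)$; moreover the computation in the proof of Theorem~\ref{thm:bd1}, specialized to $\ell = 0$ and $p = q = \infty$ (so that $f \in L_\infty$ and $|\cdot|^0 f = f \in L_\infty$, and the bound carries no $|x|$-weight), shows that for every order $j \in \enne$
\[
\sup_{x \in \erre^d} \bigl\| D^j P_s^W f(x) \bigr\|_{\LL_j(\erre^d)} \lesssim \|f\|_\infty \, s^{-j/2},
\]
with an implicit constant independent of $x$ and $s$, while trivially $\|P_s^W f\|_\infty \leq \|f\|_\infty$.

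Next I would note that hypothesis \eqref{1-1} propagates to all lower-order negative moments: since $T_t>0$ and $T_t^{-j/2} \leq 1 + T_t^{-k/2}$ for every $0 \leq j \leq k$, one has $\E T_t^{-j/2} \leq 1 + \E T_t^{-k/2} < \infty$ for all such $j$. Hence, for each $j \in \{0,1,\ldots,k\}$, the function $s \mapsto \sup_{x}\bigl\|D^j P_s^W f(x)\bigr\|_{\LL_j(\erre^d)}$ lies in $L_1(\erre_+,\nu_t)$, which is exactly the domination needed to differentiate $\int_0^\infty P_s^W f(x)\,\nu_t(ds)$ under the integral sign. Applying the classical theorem on differentiation under the integral sign inductively in $j$ yields $P_t^Y f \in C^k(\erre^d)$ together with
\[
D^j P_t^Y f(x) = \int_0^\infty D^j P_s^W f(x)\,\nu_t(ds), \qquad j = 0,1,\ldots,k .
\]
Each of these derivatives is bounded, since $\sup_x \bigl\|D^j P_t^Y f(x)\bigr\|_{\LL_j} \leq \int_0^\infty \sup_x \bigl\|D^j P_s^W f(x)\bigr\|_{\LL_j}\,\nu_t(ds) \lesssim \|f\|_\infty\,\E[1 + T_t^{-k/2}] < \infty$, and it is continuous because the integrand is continuous in $x$ (as $P_s^W f \in C^\infty$) and dominated, uniformly in $x$, by the $\nu_t$-integrable function above, so dominated convergence applies along any sequence $x_n \to x$. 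Thus $P_t^Y f \in C_b^k(\erre^d)$ for every $f \in \B_b(\erre^d)$, i.e. $P_t^Y$ is $k$-smoothing.

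I do not expect a serious obstacle: this is genuinely a corollary of Theorem~\ref{thm:bd1}. The only point requiring a little care is that Theorem~\ref{thm:bd1} as stated records merely an a priori bound on $D^k P_t^Y f(x)$, whereas the assertion ``$P_t^Y f \in C_b^k$'' also demands that $P_t^Y f$ actually be $k$ times continuously differentiable; supplying this is precisely the differentiation-under-the-integral-sign step, which rests on the two elementary facts isolated above — the uniform-in-$x$ derivative estimates for $P_s^W f$ (valid because, with $\ell=0$, the estimate of Theorem~\ref{thm:bd1} has no $|x|$-weight and $p_s$ is Schwartz) and the monotonicity of the negative moments of $T_t$.
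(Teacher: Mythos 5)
Your proposal is correct and follows essentially the same route as the paper, which treats the corollary as an immediate consequence of Theorem~\ref{thm:bd1} with $\ell=0$, $p=q=\infty$. You thoughtfully make explicit the differentiation-under-the-integral-sign step (and the monotonicity $\E T_t^{-j/2}\leq 1+\E T_t^{-k/2}$ for $j\leq k$) that the paper's proof of Theorem~\ref{thm:bd1} uses implicitly when passing from $P^W$ to $P^Y$; this is exactly the detail needed to upgrade the a priori derivative bound to genuine membership in $C_b^k(\erre^d)$.
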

Similarly, taking $\ell=0$ and $p=q$, one gets sufficient conditions
for the semigroup $P^Y$ to be $L_p$-strong Feller, $1 \leq p \leq
\infty$, or, more generally, to map $L_p(\erre^d)$ into
$C^k_b(\erre^d)$.
\begin{coroll}     \label{cor:Lippi}
  Let $p \in [1,\infty]$, $k \in \enne_0$ and $t>0$. If
  \[
  \E T_t^{-\frac12(k+d/p)} < \infty,
  \]
  then $P_t^Yf \in C^k_b(\erre^d)$ for all $f \in L_p(\erre^d)$. In
  particular, if $\E T_t^{-d/2p}<\infty$, then $P_t^Y$ is $L_p$-strong
  Feller.
\end{coroll}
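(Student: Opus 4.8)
The plan is to deduce this corollary from Theorem~\ref{thm:bd1}: more precisely, from the intermediate bound on $P_t^W$ obtained inside its proof, specialized to $\ell=0$ and $q=p$, by integrating against the subordination measure $\nu_t$ and moving derivatives inside the integral by dominated convergence. The single hypothesis on negative moments of $T_t$ will be turned into finiteness of all the lower-order negative moments via Jensen's inequality.

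First I recall from the proof of Theorem~\ref{thm:bd1} that, for $f\in L_p(\erre^d)$, $s>0$ and $j\in\enne_0$, the function $P_s^Wf=f\ast p_s$ belongs to $C^\infty(\erre^d)$ and satisfies
\[
\sup_{x\in\erre^d}\bigl\|D^jP_s^Wf(x)\bigr\|_{\LL_j(\erre^d)}\lesssim\|f\|_{L_p(\erre^d)}\,s^{-\frac12(j+d/p)}
\]
(for $j=0$ this is Young's inequality together with $\|p_s\|_{L_{p'}}=s^{-d/(2p)}\|p_1\|_{L_{p'}}$; for $j\geq1$ it is the $\ell=0$, $q=p$ instance of the displayed estimate for $\|D^kP_t^Wf\|$ in that proof). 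By Lemma~\ref{lm:dens} we have $P_t^Yf=\int_0^\infty P_s^Wf\,\nu_t(ds)$, and, since $T_t>0$ a.s. and $0\leq\tfrac12(j+d/p)\leq\tfrac12(k+d/p)$ for $0\leq j\leq k$, Jensen's inequality applied to the concave map $u\mapsto u^{(j+d/p)/(k+d/p)}$ gives
\[
\E T_t^{-\frac12(j+d/p)}=\E\bigl(T_t^{-\frac12(k+d/p)}\bigr)^{\frac{j+d/p}{k+d/p}}\leq\bigl(\E T_t^{-\frac12(k+d/p)}\bigr)^{\frac{j+d/p}{k+d/p}}<\infty
\]
for every such $j$. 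Hence the family $x\mapsto D^jP_s^Wf(x)$ is, for each $j\leq k$, dominated uniformly in $x$ by the $\nu_t$-integrable function $s\mapsto\|f\|_{L_p}s^{-\frac12(j+d/p)}$.

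With these bounds in hand, differentiating under the integral $\int_0^\infty(\cdot)\,\nu_t(ds)$ is legitimate: applying the mean value theorem to the smooth functions $P_s^Wf$ and using the domination, one obtains inductively $\partial^\alpha(P_t^Yf)=\int_0^\infty\partial^\alpha(P_s^Wf)\,\nu_t(ds)$ for $|\alpha|\leq k$, with $\|\partial^\alpha(P_t^Yf)\|_\infty<\infty$. Since each $\partial^\alpha(P_s^Wf)$ is continuous, a further application of dominated convergence shows that $\partial^\alpha(P_t^Yf)$ is continuous, so $P_t^Yf\in C^k_b(\erre^d)$. The final assertion is the case $k=0$: $P_t^Y$ maps $L_p(\erre^d)$ into $C_b(\erre^d)$ and is bounded, since $\|P_t^Yf\|_\infty\lesssim\|f\|_{L_p}\,\E T_t^{-d/(2p)}$, i.e. $P_t^Y$ is $L_p$-strong Feller.

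I do not expect a real obstacle here: the corollary essentially repackages estimates already established. The only points requiring a little care are the passage from the single moment hypothesis to finiteness of all the lower-order negative moments (handled by Jensen's inequality) and the dominated-convergence justifications for differentiating under the subordination integral and for the continuity of the derivatives, for which the uniform-in-$x$ bounds on $\|D^jP_s^Wf(x)\|$ coming from the proof of Theorem~\ref{thm:bd1} are exactly what is needed.
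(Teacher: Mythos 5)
Your proof is correct and follows the same route as the paper: the paper states the corollary as an immediate specialization of Theorem~\ref{thm:bd1} with $\ell=0$ and $q=p$, and you simply make that specialization explicit. The two pieces you add that the paper glosses over — namely that finiteness of $\E T_t^{-\frac12(k+d/p)}$ propagates to the lower-order negative moments $\E T_t^{-\frac12(j+d/p)}$ for $0\le j\le k$, and that one may differentiate under the subordination integral and conclude continuity by dominated convergence — are exactly what is needed to turn the bound on the $k$-th Fr\'echet derivative into membership in $C_b^k$. Your Jensen-inequality argument for the lower-order moments is fine (and could be replaced by the even more elementary observation that $T_t^{-a}\le T_t^{-A}\mathbf{1}_{\{T_t<1\}}+\mathbf{1}_{\{T_t\ge1\}}$ for $0<a\le A$); in particular one should just note separately the degenerate case $k=0$, $p=\infty$, where the moment condition is vacuous and the conclusion is Lemma~\ref{lm:dens}.
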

This result can be extended to (positive) real values of $k$, in which
case the space of differentiable functions $C^k_b$ has to replaced by
H\"older spaces.
\begin{prop}     \label{prop:poldo}
  Let $p \in [1,\infty]$, $\beta>0$ real, and $t>0$. If
  \[
  \E T_t^{-\frac12(\beta+d/p)} < \infty,
  \]
  then $P_t^Yf \in C^\beta_b(\erre^d)$ for all $f \in L_p(\erre^d)$.
\end{prop}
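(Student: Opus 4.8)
The plan is to extract from the Gaussian case a smoothing bound valid for a \emph{non-integer} order of differentiability, and then to average it against the law $\nu_t$ of $T_t$, in the same spirit as the proof of Theorem~\ref{thm:bd1}. If $\beta\in\enne$ the assertion is already contained in Corollary~\ref{cor:Lippi}, so I would assume $\beta\notin\enne$ and write $\beta=m+\theta$ with $m:=[\beta]\in\enne_0$ and $\theta:=\{\beta\}\in\,]0,1[$. As in the proof of Lemma~\ref{lm:dens}, for $f\in L_p(\erre^d)$ one has
\[
P^Y_tf=\int_0^\infty P^W_sf\,\nu_t(ds)
\]
by Fubini's theorem; here one uses that the hypothesis implies $\E T_t^{-r}<\infty$ for every $r\in[0,\tfrac12(\beta+d/p)]$, since $T_t^{-r}\le 1+T_t^{-r'}$ whenever $0\le r\le r'$.

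The core of the argument is the Gaussian estimate: there is a constant $N=N(d,p,\beta)$ such that
\[
\|P^W_sg\|_{C^\beta_b(\erre^d)}\le N\bigl(1+s^{-\frac12(\beta+d/p)}\bigr)\|g\|_{L_p(\erre^d)}\qquad\text{for all }s>0\text{ and }g\in L_p(\erre^d).
\]
Since $\partial^\alpha P^W_sg=g\ast\partial^\alpha p_s$ and $p_1\in\mathscr{S}(\erre^d)$, the scaling identity $\|\partial^\alpha p_s\|_{L_{p'}}=s^{-|\alpha|/2-d/(2p)}\|\partial^\alpha p_1\|_{L_{p'}}$ (with $p'$ the conjugate exponent of $p$), combined with H\"older's inequality, gives $\|\partial^\alpha P^W_sg\|_\infty\lesssim s^{-|\alpha|/2-d/(2p)}\|g\|_{L_p}$, which for $|\alpha|\le m\le\beta$ is $\lesssim s^{-\frac12(\beta+d/p)}$ when $s\le1$ and $\lesssim1$ when $s\ge1$; this controls every summand of the $C^\beta_b$-norm of order at most $m$. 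For the H\"older seminorm of the derivatives of order $m$ I would first bound, for $|\gamma|=m$ and $h\in\erre^d$, the translate difference $\bigl\|\partial^\gamma p_s(\cdot+h)-\partial^\gamma p_s\bigr\|_{L_{p'}}$ by distinguishing two regimes: for $|h|\ge\sqrt s$ one uses the crude estimate $2\|\partial^\gamma p_s\|_{L_{p'}}$ and inserts $1\le(|h|/\sqrt s)^\theta$; for $|h|<\sqrt s$ one uses the fundamental theorem of calculus together with Minkowski's inequality to get $\le|h|\,\|\nabla\partial^\gamma p_s\|_{L_{p'}}$ and inserts $|h|\le|h|^\theta s^{(1-\theta)/2}$. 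In both cases the scaling of the $L_{p'}$-norms of $p_s$ and of its derivatives yields $\lesssim|h|^\theta s^{-\frac12(m+\theta+d/p)}=|h|^\theta s^{-\frac12(\beta+d/p)}$, whence
\[
\bigl|\partial^\gamma P^W_sg(x)-\partial^\gamma P^W_sg(y)\bigr|\le\|g\|_{L_p}\,\bigl\|\partial^\gamma p_s(\cdot+(x-y))-\partial^\gamma p_s\bigr\|_{L_{p'}}\lesssim|x-y|^\theta\,s^{-\frac12(\beta+d/p)}\,\|g\|_{L_p}.
\]

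It then remains to average. Since $m\le\beta$, the hypothesis gives $\E T_t^{-\frac12(m+d/p)}<\infty$, so the sup-norm bounds of the previous step are $\nu_t$-integrable and allow one to differentiate $\int_0^\infty P^W_sf\,\nu_t(ds)$ under the integral sign up to order $m$; hence $\partial^\gamma P^Y_tf=\int_0^\infty\partial^\gamma P^W_sf\,\nu_t(ds)$ for every $\gamma$ with $|\gamma|\le m$, and these functions are bounded and, by dominated convergence, continuous. Finally, for $|\gamma|=m$ and $x\ne y$,
\[
\bigl|\partial^\gamma P^Y_tf(x)-\partial^\gamma P^Y_tf(y)\bigr|\le\int_0^\infty\bigl|\partial^\gamma P^W_sf(x)-\partial^\gamma P^W_sf(y)\bigr|\,\nu_t(ds)\lesssim|x-y|^\theta\,\|f\|_{L_p}\,\E T_t^{-\frac12(\beta+d/p)},
\]
which is finite by hypothesis; therefore $P^Y_tf\in C^m_b(\erre^d)$ with its $m$-th order derivatives in $C^\theta_b(\erre^d)$, i.e.\ $P^Y_tf\in C^\beta_b(\erre^d)$.

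The only genuinely new computation is the Gaussian estimate, and I expect its H\"older-seminorm part (for non-integer $\beta$) to be the main obstacle. The point is that one cannot reach the sharp exponent $\tfrac12(\beta+d/p)$ by interpolating the integer-order bounds of Corollary~\ref{cor:Lippi}: after averaging, such a route would require the strictly stronger condition $\E T_t^{-\frac12(m+1+d/p)}<\infty$; the sharp exponent is recovered only by interpolating at the level of the heat kernel, where the powers of $s$ combine additively in the exponent, \emph{before} integrating against $\nu_t$. (For $1<p<\infty$ one could alternatively pass through the mapping property $P^W_s:L_p(\erre^d)\to H_p^{\beta+d/p}(\erre^d)$ and the Sobolev embedding of Lemma~\ref{lm:Bess}, but the direct heat-kernel estimate is more elementary and also covers the cases $p\in\{1,\infty\}$.)
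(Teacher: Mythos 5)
Your proof is correct and follows the same outer strategy as the paper --- derive a quantitative H\"older estimate for the Gaussian semigroup $P^W_s$ and then average it against $\nu_t$ --- but the two arguments obtain that Gaussian estimate by genuinely different techniques. The paper reduces (with no further explanation) to the case $0<\beta<1$ and then bounds the H\"older quotient of $P^W_tf$ via the multiplicative split
\[
\frac{|p_t(x_1-y)-p_t(x_2-y)|}{|x_1-x_2|^\beta}
=\left(\frac{|p_t(x_1-y)-p_t(x_2-y)|}{|x_1-x_2|}\right)^{\beta}
\bigl|p_t(x_1-y)-p_t(x_2-y)\bigr|^{1-\beta},
\]
applying H\"older's inequality, extracting $\|p_1\|^{\beta}_{\dot{C}^{0,1}}$, and finishing by scaling. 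You instead keep $\beta=m+\theta$ general --- thereby supplying the reduction the paper calls ``easily seen'' --- and deduce the $\theta$-H\"older modulus of $\partial^\gamma P_s^Wg$, $|\gamma|=m$, from a two-regime estimate of the kernel modulus $\|\partial^\gamma p_s(\cdot+h)-\partial^\gamma p_s\|_{L_{p'}}$: the crude bound when $|h|\ge\sqrt s$, the fundamental theorem of calculus when $|h|<\sqrt s$. Both routes produce the same exponent $s^{-\frac12(\beta+d/p)}$ before subordination, and the integration against $\nu_t$ is then identical. The paper's split is the slicker computation for $0<\beta<1$; yours is the more systematic one, dispenses with the auxiliary reduction, and carries over to arbitrary $\beta$ without a separate step. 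Your closing remark is also on the mark: the interpolation must be carried out at the kernel level before averaging (interpolating the integer-order bounds of Corollary~\ref{cor:Lippi} afterwards would cost the strictly stronger moment $\E T_t^{-\frac12(m+1+d/p)}$), and the Bessel-potential route of Corollary~\ref{cor:BH} indeed misses the endpoints $p\in\{1,\infty\}$.
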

\begin{proof}
  It is easily seen that it is enough to consider the case $0 < \beta
  < 1$. Denoting the conjugate exponent of $p$ by $p'$, one has
  \begin{align*}
  &\frac{\bigl| P_t^Wf(x_1)-P_t^Wf(x_2) \bigr|}{|x_1-x_2|^\beta}\\
  &\hspace{3em}\leq \int_{\erre^d} |f(y)| 
      \frac{|p_t(x_1-y)-p_t(x_2-y)|}{|x_1-x_2|^\beta}\,dy\\
  &\hspace{3em}= \int_{\erre^d} |f(y)| \, \left| 
      \frac{p_t(x_1-y)-p_t(x_2-y)}{(x_1-y)-(x_2-y)}\right|^\beta
      |p_t(x_1-y)-p_t(x_2-y)|^{1-\beta}\,dy\\
  &\hspace{3em}\leq \|f\|_{L_p} \left( \int_{\erre^d}
  \left| \frac{p_t(x_1-y)-p_t(x_2-y)}{(x_1-y)-(x_2-y)}\right|^{\beta p'}
      |p_t(x_1-y)-p_t(x_2-y)|^{(1-\beta)p'}\,dy
  \right)^{1/p'}
  \end{align*}
  Moreover, recalling the scaling properties of $p_t$, one has
  \begin{align*}
    \frac{|p_t(x)-p_t(y)|}{|x-y|} &=
    t^{-d/2} \frac{|p_1(t^{-1/2}x)-p_1(t^{-1/2}y)|}{|x-y|}\\
    &= t^{-d/2} t^{-1/2}
    \frac{|p_1(t^{-1/2}x)-p_1(t^{-1/2}y)|}{|t^{-1/2}x-t^{-1/2}y|}\\
    &\leq t^{-d/2} t^{-1/2} \|p_1\|_{\dot{C}^{0,1}},
  \end{align*}
  where $\|p_1\|_{\dot{C}^{0,1}}$ stands for the Lipschitz constant of
  $p_1$. The latter implies
  \begin{multline*}
    \frac{\bigl| P_t^Wf(x_1)-P_t^Wf(x_2) \bigr|}{|x_1-x_2|^\beta}
    \\\leq \|f\|_\infty \, \bigl\|p_1\bigr\|^\beta_{\dot{C}^{0,1}} \,
    t^{-\beta/2} t^{-d\beta/2} \left( \int_{\erre^d}
      |p_t(x_1-y)-p_t(x_2-y)|^{(1-\beta)p'}\,dy \right)^{1/p'}.
  \end{multline*}
  Again by the scaling properties of $p_t$, as well as elementary
  inequalities, and changing variable a few times, one gets
  \begin{align*}
    \left( \int_{\erre^d} |p_t(x_1-y)-p_t(x_2-y)|^{(1-\beta)p'}\,dy
    \right)^{1/p'}
    &\lesssim \left( \int_{\erre^d} |p_t(y)|^{(1-\beta)p'}\,dy \right)^{1/p'}\\
    &\leq t^{-d(1-\beta)/2} \, t^{\frac{d}{2}\frac{1}{p'}} \bigl\|
    |p_1|^{1-\beta} \bigr\|_{L_{p'}}.
  \end{align*}
  We have thus obtained
  \[
  \sup_{x_1 \neq x_2}
  \frac{\bigl| P_t^Wf(x_1)-P_t^Wf(x_2) \bigr|}{|x_1-x_2|^\beta}
  \lesssim \|f\|_{L_p(\erre^d)} \, t^{-\frac{\beta}{2} - \frac{d}{2p}}.
  \]
  By subordination and Minkowski's inequality, this implies
  \[
  \bigl\| P_t^Y f\bigr\|_{\dot{C}_b^\beta} :=  \sup_{x_1 \neq x_2}
  \frac{\bigl| P_t^Yf(x_1)-P_t^Yf(x_2) \bigr|}{|x_1-x_2|^\beta}
  \lesssim \|f\|_{L_p(\erre^d)} \, \E T_t^{-\frac12(\beta - d/p)}.
  \]
  Since $\E T_t^{-d/2p} \leq \E T_t^{-\frac12(\beta - d/p)}<\infty$,
  the previous corollary yields $P_t^Yf \in C_b$, allowing us to
  conclude that $\bigl\| P_t^Y f\bigr\|_{C_b^\beta}= \|P_t^Yf\|_\infty
  + \bigl\| P_t^Y f\bigr\|_{\dot{C}_b^\beta}<\infty$, which is the
  desired result.
\end{proof}

We shall see in \S\ref{ssec:Bess} (in particular, cf. Corollary
\ref{cor:BH}) that the last corollary can be obtained, at least for
$1<p<\infty$, also by results on embeddings of Sobolev spaces.

\begin{rmk}
  Theorem \ref{thm:bd1}, thus also its corollaries, continue to hold
  also in the more general case that the covariance matrix of $W$ is
  $Q \neq I$ with $\det Q \neq 0$. In fact, in this case the density
  of the Gaussian random variable $W_t$, $t>0$, is
  \[
  p_t^Q(x) = \frac{1}{t^{d/2}} \frac{1}{\sqrt{\det Q}}
  \frac{1}{(2\pi)^{d/2}} \exp\Bigl( \frac{\ip{Q^{-1}x}{x}}{2t} \Bigr),
  \]
  which can be written as
  \[
  p_t^Q(x) = \frac{1}{\sqrt{\det Q}} \, p_t(Q^{-1/2}x).
  \]
  One can now establish a corresponding version of Lemma
  \ref{lm:Hermite}, e.g. introducing the following equivalent scalar
  product and norm in $\erre^d$:
  \[
  \ip{x}{y}_Q := \ip{Q^{-1}x}{y}, \qquad \|x\|^2 := \ip{x}{x}_Q,
  \]
  and computing the Fr\'echet derivatives under the ``new'' topology
  (recall that Fr\'echet differentiability does not depend on the
  metric properties of the underlying space). It is easily seen that
  an expression completely analogous to \eqref{eq:Fre-hk} still holds,
  if one uses the scalar product $\ip{\cdot}{\cdot}_Q$ in place of the
  natural one. One gets, for instance,
  \[
  D^2 p^Q_t(x) = t^{-2} p^Q_t(x) \ip{Q^{-1}x}{\cdot}
  \ip{Q^{-1}x}{\cdot} -t^{-1} p^Q_t(x) \ip{Q^{-1}\cdot}{\cdot}.
  \]
  Moreover, since the norms $|\cdot|$ and $\|\cdot\|$ are equivalent,
  the estimate \eqref{lm-eq2} continues to hold also if $p_t$ is
  replaced by $p_t^Q$.
\end{rmk}

We are now going to provide a simple, yet very useful, sufficient
and necessary condition for the finiteness of negative moments of
subordinators in terms of their Laplace exponent.
\begin{prop}     \label{prop:B1}
  Let $1 \leq p < \infty$ and $t>0$. Then $\E T_t^{-p}<\infty$ if and
  only if $\lambda \mapsto \lambda^{p-1} e^{-t\Phi(\lambda)} \in L_1$.
\end{prop}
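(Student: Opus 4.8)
The plan is to reduce the statement to a single application of Tonelli's theorem via the classical integral representation of negative powers. I would start from the identity
\[
x^{-p} = \frac{1}{\Gamma(p)} \int_0^\infty \lambda^{p-1} e^{-\lambda x}\,d\lambda,
\qquad x>0,
\]
valid for every $p>0$: it is nothing but the definition of the Euler Gamma function $\Gamma$ after the change of variable $\mu = \lambda x$, and one has $\Gamma(p) \in \,]0,\infty[$. Since, by definition, the subordinator $T$ satisfies $T_t>0$ $\P$-a.s. for $t>0$, this identity may be applied pathwise with $x=T_t$.

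The key step is then to take expectations on both sides. The function $(\omega,\lambda) \mapsto \lambda^{p-1} e^{-\lambda T_t(\omega)}$ is nonnegative and jointly measurable (measurable in $\omega$ and continuous in $\lambda$), so Tonelli's theorem permits interchanging $\E$ and $\int_0^\infty$, which gives
\[
\E T_t^{-p} = \frac{1}{\Gamma(p)} \int_0^\infty \lambda^{p-1}\, \E e^{-\lambda T_t}\,d\lambda
= \frac{1}{\Gamma(p)} \int_0^\infty \lambda^{p-1} e^{-t\Phi(\lambda)}\,d\lambda,
\]
where the second equality is just the definition of the Laplace exponent, $\E e^{-\lambda T_t} = e^{-t\Phi(\lambda)}$. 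As $\Gamma(p)$ is a finite positive constant and $\lambda \mapsto \lambda^{p-1} e^{-t\Phi(\lambda)}$ is nonnegative, the left-hand side is finite if and only if the integral on the right is finite, i.e. if and only if $\lambda \mapsto \lambda^{p-1} e^{-t\Phi(\lambda)} \in L_1$; this is precisely the claimed equivalence.

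There is no serious obstacle: the only points needing (minor) attention are the a.s.\ finiteness of $T_t^{-p}$, ensured by $T_t>0$ a.s., and the applicability of Tonelli's theorem, ensured by nonnegativity. A pleasant feature of this argument is that it computes $\E T_t^{-p}$ exactly (as an element of $[0,\infty]$), so the two implications of the ``if and only if'' come out simultaneously rather than requiring separate treatment; incidentally, the same reasoning goes through verbatim for any $p>0$, so the restriction $p\ge 1$ is not essential for this statement.
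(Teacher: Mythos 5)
Your proof is correct and is essentially identical to the paper's: both derive $x^{-p}=\Gamma(p)^{-1}\int_0^\infty \lambda^{p-1}e^{-\lambda x}\,d\lambda$ from the definition of the Gamma function, apply it pathwise to $T_t$, and swap $\E$ with $\int_0^\infty$ by Tonelli. Your closing remark that the argument works for every $p>0$ (not just $p\ge 1$) is a correct, if minor, observation.
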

\begin{proof}
  By definition of gamma function, that is
  \[
  \Gamma(p) = \int_0^\infty z^{p-1} e^{-z}\,dz,
  \]
  one gets, by the change of variable $z=as$, $a>0$,
  \[
  a^{-p} = \frac{1}{\Gamma(p)} \int_0^\infty s^{p-1} e^{-as}\,ds.
  \]
  This implies, by Tonelli's theorem,
  \[
  \E T_t^{-p} = \frac{1}{\Gamma(p)} 
  \int_0^\infty \lambda^{p-1} \E e^{-\lambda T_t}\,d\lambda
  = \frac{1}{\Gamma(p)} \int_0^\infty \lambda^{p-1} e^{-t\Phi(\lambda)}\,d\lambda,
  \]
  thus finishing the proof.
\end{proof}

It is also possible to give a sufficient condition for the finiteness
of negative moments of subordinators in terms of their L\'evy measure.
The following proposition is a special case of a Tauberian theorem due
to Bismut \cite[Thm.~4.15, p.~208]{Bism-jumps}.
\begin{prop}
  Let $0 < p < 1$, $C>0$. If
  \begin{equation}     \label{eq:taube}
  m\big( ]x,+\infty[ \big) \sim Cx^{-p} \qquad \text{as } x \to 0^+,
  \end{equation}
  then
  \[
  \Phi(\lambda) \sim -C\Gamma(1-p) \lambda^{-p} \qquad \text{as }
  \lambda \to +\infty.
  \]
  In particular, (\ref{eq:taube}) implies $\E T_t^{-p} < \infty$ for
  all $t>0$.
\end{prop}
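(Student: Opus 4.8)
The plan is to split the argument into an Abelian step — establishing the claimed asymptotics of the Laplace exponent $\Phi$ — followed by a short deduction of the moment bound from Proposition~\ref{prop:B1}. (Since the proposition is a special case of \cite[Thm.~4.15, p.~208]{Bism-jumps}, the first step may simply be quoted; below I outline a self-contained derivation.)

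First I would rewrite $\Phi$ in terms of the tail $\bar m(u):=m\big(]u,+\infty[\big)$. Using $1-e^{-\lambda x}=\lambda\int_0^x e^{-\lambda u}\,du$ and Tonelli's theorem gives $\Phi(\lambda)=\lambda\int_0^\infty e^{-\lambda u}\bar m(u)\,du$, the integral being finite for each $\lambda>0$ since $\Phi(\lambda)<\infty$. The substitution $u=v/\lambda$ then yields $\Phi(\lambda)=\lambda^{p}\int_0^\infty e^{-v}\big(\lambda^{-p}\bar m(v/\lambda)\big)\,dv$. Because $\bar m(u)\sim Cu^{-p}$ as $u\to0^+$, the integrand converges pointwise to $Ce^{-v}v^{-p}$; to pass to the limit I would dominate it, for $\lambda\ge1$, by $e^{-v}\big(Kv^{-p}+\bar m(\delta)\big)$, where the constants $K,\delta$ come from the bound $\bar m(u)\le Ku^{-p}$ valid near $0$, monotonicity of $\bar m$ handling the range $v/\lambda\ge\delta$, and $\bar m(\delta)<\infty$ because $m$ is a L\'evy measure. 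This majorant lies in $L_1(0,\infty)$ precisely because $p<1$, so dominated convergence gives $\int_0^\infty e^{-v}\lambda^{-p}\bar m(v/\lambda)\,dv\to C\int_0^\infty e^{-v}v^{-p}\,dv=C\Gamma(1-p)$; that is, $\Phi(\lambda)\sim C\Gamma(1-p)\lambda^{p}$ as $\lambda\to+\infty$.

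From there the finiteness of $\E T_t^{-p}$ is routine. The asymptotics supply $c>0$ and $\lambda_0>0$ with $\Phi(\lambda)\ge c\lambda^p$ for $\lambda\ge\lambda_0$; using also $\Phi\ge0$, continuity of $\Phi$ near the origin, and $p-1>-1$, I would estimate
\[
\int_0^\infty \lambda^{p-1}e^{-t\Phi(\lambda)}\,d\lambda
\;\le\; \int_0^{\lambda_0}\lambda^{p-1}\,d\lambda
  + \int_{\lambda_0}^\infty \lambda^{p-1}e^{-tc\lambda^p}\,d\lambda \;<\;\infty,
\]
the last integral being finite after the change of variable $v=\lambda^p$. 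Proposition~\ref{prop:B1} then gives $\E T_t^{-p}<\infty$ for every $t>0$.

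The only genuinely delicate point is the Abelian step: one must check that the absence of control on $\bar m$ away from the origin does not spoil the limit, which is exactly why the weight $e^{-\lambda u}$ — concentrating the integral near $u=0$ as $\lambda\to\infty$ — is doing the essential work. For the sake of brevity one could instead invoke \cite[Thm.~4.15, p.~208]{Bism-jumps} directly and pass straight to the moment bound.
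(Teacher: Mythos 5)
Your argument is correct. The paper does not give a proof here at all: it simply records that the proposition is a special case of Bismut's Tauberian theorem and moves on. You offer both options (citing Bismut, or arguing directly) and then actually carry out a self-contained Abelian proof, which is the more informative route. The identity $\Phi(\lambda)=\lambda\int_0^\infty e^{-\lambda u}\bar m(u)\,du$ (obtained from $1-e^{-\lambda x}=\lambda\int_0^x e^{-\lambda u}\,du$ and Tonelli), the rescaling $u=v/\lambda$, and the dominated-convergence step with the majorant $e^{-v}\bigl(Kv^{-p}+\bar m(\delta)\bigr)$ are all sound, and you correctly use $p<1$ for integrability near $0$ and monotonicity together with $\bar m(\delta)<\infty$ for the tail. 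The passage from the asymptotics to $\E T_t^{-p}<\infty$ via Proposition~\ref{prop:B1} is likewise clean.

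One point worth flagging explicitly: the asymptotic relation as printed in the statement, $\Phi(\lambda)\sim -C\Gamma(1-p)\lambda^{-p}$, cannot be right (the right-hand side is negative and tends to $0$, while $\Phi\ge0$ and $\Phi(\lambda)\to\infty$). Your computation yields the corrected version $\Phi(\lambda)\sim C\Gamma(1-p)\lambda^{p}$ as $\lambda\to+\infty$, which is what the subsequent deduction of $\E T_t^{-p}<\infty$ actually requires, and which agrees with what the authors evidently intended. It would be worth stating this correction in so many words rather than silently producing the right formula.
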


As a noteworthy application of the criteria just proved we recover a
(known) result on smoothness of $\alpha$-stable densities.
\begin{coroll}     \label{cor:smal}
  Let $Y$ be an $\erre^d$-valued rotation-invariant $\alpha$-stable
  process. Then, for any $f \in \B_b(\erre^d)$, one has $P_t^Y f \in
  C^\infty_b(\erre^d)$ for all $t>0$.
\end{coroll}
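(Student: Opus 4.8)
The plan is to realize the rotation-invariant $\alpha$-stable process $Y$ as a subordinated Wiener process and then apply Corollary \ref{cor:smoothY} for every $k \in \enne$. Recall that an $\erre^d$-valued rotation-invariant $\alpha$-stable process, $0<\alpha<2$, has characteristic exponent $\psi(\xi) = c|\xi|^\alpha$, which is precisely the characteristic exponent of $W \circ T$ when $T$ is an $\alpha/2$-stable subordinator (i.e. with Laplace exponent $\Phi(\lambda) = c'\lambda^{\alpha/2}$), since $\E e^{i\ip{\xi}{W_{T_t}}} = \E e^{-T_t |\xi|^2/2} = e^{-t\Phi(|\xi|^2/2)} = e^{-tc'2^{-\alpha/2}|\xi|^\alpha}$. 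Thus, up to the usual normalization of the stable constant, $Y$ has the same law as $W$ subordinated to an $\alpha/2$-stable subordinator, and the smoothing framework of Section \ref{sec:smoothY} applies.

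The heart of the matter is then to check that $\E T_t^{-k/2} < \infty$ for all $k \in \enne$ and all $t>0$, so that Corollary \ref{cor:smoothY} gives $P_t^Y f \in C^k_b(\erre^d)$ for each $k$, and hence $P_t^Y f \in C^\infty_b(\erre^d)$. I would verify this via Proposition \ref{prop:B1}: with $p = k/2$ (treating the case $p \geq 1$; for $k=1$, where $p=1/2<1$, one uses the preceding Tauberian proposition instead, noting $m(]x,\infty[) \sim C x^{-\alpha/2}$ for an $\alpha/2$-stable subordinator), the condition $\E T_t^{-k/2} < \infty$ is equivalent to $\lambda \mapsto \lambda^{k/2 - 1} e^{-t c' \lambda^{\alpha/2}} \in L_1(]0,\infty[)$. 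This is immediate: near $0$ the exponential is bounded and $\lambda^{k/2-1}$ is integrable since $k/2 - 1 > -1$, while near $+\infty$ the stretched-exponential factor $e^{-tc'\lambda^{\alpha/2}}$ decays faster than any polynomial grows. Hence all negative moments of $T_t$ are finite.

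The main (and essentially only) obstacle is the bookkeeping in the first step: one must be slightly careful about normalization constants and about the distinction between the rotation-invariant stable process and the subordinated Brownian motion — strictly speaking one should note that their laws coincide (after suitably choosing the stable constant), and that the transition semigroup $P_t^Y$ depends only on this law, so that the results of the section transfer verbatim. For $k=1$ one should also mind that Proposition \ref{prop:B1} is stated for $p\geq 1$, but this is easily circumvented either by the Tauberian proposition or simply by observing that finiteness of $\E T_t^{-m}$ for some $m \geq 1$ implies finiteness of $\E T_t^{-s}$ for all $0 \leq s \leq m$ by Jensen's inequality. Once these minor points are dispatched, the conclusion $P_t^Y f \in \bigcap_{k\in\enne} C^k_b(\erre^d) = C^\infty_b(\erre^d)$ is immediate.
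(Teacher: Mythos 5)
Your proof is correct and follows essentially the same route as the paper's: realize $Y$ as $W\circ T$ with $T$ an $\alpha/2$-stable subordinator, note $\Phi(\lambda)\propto\lambda^{\alpha/2}$ so that $e^{-t\Phi}$ decays faster than any polynomial, and conclude via Proposition~\ref{prop:B1} and Corollary~\ref{cor:smoothY}. The extra care you take over the $k=1$ case (where $p=1/2<1$ lies outside the stated range of Proposition~\ref{prop:B1}) is harmless but in fact unnecessary: since $C^\infty_b=\bigcap_{k\ge 2}C^k_b$, one only needs $\E T_t^{-k/2}<\infty$ for $k\ge 2$, so Proposition~\ref{prop:B1} as stated already suffices.
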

\begin{proof}
  One can write $Y=W \circ T$ in distribution, where $T$ is an
  $\alpha/2$-stable subordinator (see e.g. \cite{ST}). In particular
  one has $\Phi(\lambda) \propto \lambda^{\alpha/2}$, hence
  $e^{-t\Phi(\cdot)}$ is rapidly decreasing for all $t>0$ and $\lambda
  \mapsto \lambda^{p-1}e^{-t\Phi(\lambda)} \in L_1$ for all $p \geq
  1$. The desired result then follows by Corollary \ref{cor:smoothY}
  and Proposition \ref{prop:B1}.
\end{proof}

\begin{ex}[Variance-gamma processes]     \label{ex:vg}
  Let $Y=W \circ T$, where $T$ is a Gamma process, independent of $W$,
  with parameters $a$ and $b$. Such a process $Y$ is often called
  (especially in the literature on mathematical finance) a
  variance-gamma process. It is known (see e.g. \cite[p.~73]{Bertoin})
  that, for any $t>0$, the random variable $T_t$ admits the density
  \[
  x \mapsto \frac{b^{at}}{\Gamma(at)} x^{at-1} e^{-bx}, \qquad x \geq
  0.
  \]
  Elementary calculations based on the definition and properties of
  the Gamma function yield that $\E T_t^{-k/2}<\infty$ if and only if
  $t>k/(2a)$. Therefore, according to Corollary \ref{cor:smoothY},
  $P_t^Y$ is $k$-smoothing for all $t>k/(2a)$ (and of course it is
  strong Feller for all $t>0$). As we shall see in the next section,
  the semigroup associated to the variance-gamma process $Y$ is indeed
  only ``eventually'' regularizing.
\end{ex}

\subsection{Smoothing in scales of Sobolev and Bessel spaces}
\label{ssec:Bess}
The results of this subsection are not used in the rest of the
paper. We have nonetheless included them here because they are, in our
opinion, an interesting complement to the smoothing properties of the
semigroup $P^Y$ proved above.

We first consider mapping properties of $P^Y$ from $L_p(\erre^d)$ to
integer-order Sobolev spaces $W_p^m(\erre^d)$. Note that also the
endpoint cases $p=1$ and $p=\infty$ are included.
\begin{prop}
  Let $1 \leq p \leq \infty$, $f \in L_p(\erre^d)$, and $m \in
  \enne_0$. If $\E T_t^{-m/2} < \infty$, then $P_t^Yf \in
  W_p^m(\erre^d)$ with
  \[
  \bigl\| \partial^\alpha P_t^Y \bigr\|_{L_p(\erre^d)} 
  \lesssim \|f\|_{L_p(\erre^d)} \E T_t^{-m/2}
  \]
  for all multiindices $\alpha$ such that $|\alpha|=m$.
\end{prop}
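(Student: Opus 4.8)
The plan is to reduce the statement to an estimate on the derivatives of the Wiener semigroup $P^W_t$ and then subordinate, exactly in the spirit of the proof of Theorem~\ref{thm:bd1}. First I would recall from Lemma~\ref{lm:mix} (or equivalently from the bound \eqref{lm-eq2} in the proof of Theorem~\ref{thm:bd1}) that for a multiindex $\alpha$ with $|\alpha|=m$ one has $\partial^\alpha(P^W_tf)=f\ast\partial^\alpha p_t$, and that $\|\partial^\alpha p_t\|_{L_1(\erre^d)}\lesssim t^{-m/2}$. Indeed, from $\partial^\alpha p_t(x)=t^{-m/2}(-1)^m H_\alpha(t^{-1/2}x)p_t(x)$ and the scaling $p_t(x)=t^{-d/2}p_1(t^{-1/2}x)$, a change of variable gives $\|\partial^\alpha p_t\|_{L_1}=t^{-m/2}\|H_\alpha p_1\|_{L_1}=:N_\alpha\,t^{-m/2}$, where the constant $N_\alpha$ is finite because $H_\alpha p_1\in\mathscr{S}(\erre^d)$.

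Next I would apply Young's convolution inequality in the form $\|f\ast g\|_{L_p}\le\|f\|_{L_p}\|g\|_{L_1}$, valid for all $p\in[1,\infty]$, to obtain
\[
\bigl\|\partial^\alpha P^W_tf\bigr\|_{L_p(\erre^d)}
=\bigl\|f\ast\partial^\alpha p_t\bigr\|_{L_p(\erre^d)}
\le\|f\|_{L_p(\erre^d)}\,\|\partial^\alpha p_t\|_{L_1(\erre^d)}
\lesssim\|f\|_{L_p(\erre^d)}\,t^{-m/2}.
\]
Then I would pass to $Y=W\circ T$ by conditioning on $T_t$: since $P^Y_tf(x)=\int_0^\infty P^W_sf(x)\,\nu_t(ds)$ with $\nu_t=\P\circ T_t^{-1}$, and differentiation in $x$ commutes with the $s$-integration (which should be justified by dominated convergence using the integrable majorant coming from the bound above, valid for $s$ in any compact subset of $]0,\infty[$, together with $\E T_t^{-m/2}<\infty$ to control small $s$), one gets $\partial^\alpha P^Y_tf=\int_0^\infty\partial^\alpha P^W_sf\,\nu_t(ds)$. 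Taking the $L_p$ norm and using Minkowski's integral inequality,
\[
\bigl\|\partial^\alpha P^Y_tf\bigr\|_{L_p(\erre^d)}
\le\int_0^\infty\bigl\|\partial^\alpha P^W_sf\bigr\|_{L_p(\erre^d)}\,\nu_t(ds)
\lesssim\|f\|_{L_p(\erre^d)}\int_0^\infty s^{-m/2}\,\nu_t(ds)
=\|f\|_{L_p(\erre^d)}\,\E T_t^{-m/2},
\]
which is finite by hypothesis; summing over $|\alpha|\le m$ yields $P^Y_tf\in W^m_p(\erre^d)$.

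The main obstacle, though a mild one, is the rigorous justification of differentiating under the integral sign and, more precisely, of asserting that the distributional derivative $\partial^\alpha P^Y_tf$ (a priori only a Schwartz distribution, since $f$ is merely in $L_p$) coincides with the $L_p$ function $\int_0^\infty\partial^\alpha P^W_sf\,\nu_t(ds)$. The cleanest route is to test against $\varphi\in C^\infty_c(\erre^d)$: write $\langle P^Y_tf,\partial^\alpha\varphi\rangle=\int_0^\infty\langle P^W_sf,\partial^\alpha\varphi\rangle\,\nu_t(ds)=(-1)^m\int_0^\infty\langle\partial^\alpha P^W_sf,\varphi\rangle\,\nu_t(ds)$, using the Fubini-type exchange of the $s$-integral with the spatial pairing (legitimate by the $L_1$-bound on $\partial^\alpha p_s$ integrated against $\nu_t$, which is finite precisely because $\E T_t^{-m/2}<\infty$), and then $=(-1)^m\langle g_\alpha,\varphi\rangle$ where $g_\alpha:=\int_0^\infty\partial^\alpha P^W_sf\,\nu_t(ds)\in L_p$ by the Minkowski estimate above. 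This identifies $\partial^\alpha P^Y_tf=g_\alpha\in L_p(\erre^d)$ in the sense of distributions and completes the proof.
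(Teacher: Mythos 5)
Your proof is correct and follows essentially the same route as the paper: Lemma~\ref{lm:mix} plus heat-kernel scaling to get $\|\partial^\alpha p_t\|_{L_1}\lesssim t^{-|\alpha|/2}$, Young's convolution inequality for the Wiener semigroup estimate, then subordination together with Minkowski's integral inequality. The one place you add substance is the last paragraph, where you carefully identify the distributional derivative $\partial^\alpha P_t^Y f$ with the $L_p$ function $\int_0^\infty \partial^\alpha P_s^W f\,\nu_t(ds)$ by testing against $C_c^\infty$ and using the $L_1$-bound on $\partial^\alpha p_s$ to justify the Fubini exchange; the paper simply asserts the interchange $\partial^\alpha P_t^Y f = \int_0^\infty \partial^\alpha P_s^W f\,\nu_t(ds)$ without comment. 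Your extra care is a genuine (if mild) improvement in rigor, not a different argument. One small cosmetic point: the paper concludes via the equivalence $\|\phi\|_{W_p^m}\lesssim\|\phi\|_{L_p}+\sum_{|\alpha|=m}\|\partial^\alpha\phi\|_{L_p}$ rather than summing over all $|\alpha|\le m$, though your summation also works since $\E T_t^{-j}\le 1+\E T_t^{-m/2}<\infty$ for $0\le j\le m/2$.
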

\begin{proof}
  For any multiindex $\alpha$, Lemma \ref{lm:mix} and the scaling
  properties of the heat kernel $p_t$ yield
  \begin{align*}
    \partial^\alpha P^W_tf &= t^{-|\alpha|/2} (-1)^{|\alpha|}
    \int_{\erre^d} f(y) H_\alpha\bigl(t^{-1/2}(x-y)\bigr)p_t(x-y)\,dy\\
    &= t^{-|\alpha|/2} t^{-d/2} (-1)^{|\alpha|} \int_{\erre^d} f(y)
    H_\alpha\bigl(t^{-1/2}(x-y)\bigr)
    p_1\bigl(t^{-1/2}(x-y)\bigr)\,dy,
  \end{align*}
  hence, by Young's inequality and the change of variable formula, one
  obtains
  \begin{align*}
    \bigl\| \partial^\alpha P^W_tf \bigr\|_{L_p(\erre^d)} &\leq
    t^{-|\alpha|/2} t^{-d/2} \|f\|_{L_p(\erre^d)}
    \int_{\erre^d} \bigl|H_\alpha(t^{-1/2}x)\bigr| p_1(t^{-1/2}x)\,dx\\
    &= t^{-|\alpha|/2} \|f\|_{L_p(\erre^d)} \int_{\erre^d} \bigl|H_\alpha(x)\bigr|
    p_1(x)\,dx.
  \end{align*}
  We have thus shown that one has
  \[
  \bigl\| \partial^\alpha P^W_tf \bigr\|_{L_p} \lesssim \|f\|_{L_p} \,
  t^{-|\alpha|/2},
  \]
  which also implies, for any $\alpha$ such that $|\alpha|=m$,
  \[
  \bigl\| \partial^\alpha P^Y_tf \bigr\|_{L_p} \lesssim
  \|f\|_{L_p} \, \E T_t^{-m/2}.
  \]
  In fact, by Minkowski's inequality, one has
  \begin{align*}
    \bigl\| \partial^\alpha P^Y_tf \bigr\|_{L_p} &= \left\|
      \int_0^\infty \partial^\alpha P_s^W f \,\nu_t(ds) \right\|_{L_p}
    \leq
    \int_0^\infty \bigl\|\partial^\alpha P_s^W f\bigr\|_{L_p} \,\nu_t(ds)\\
    &\lesssim \|f\|_{L_p} \int_0^\infty s^{-|\alpha|/2} \,\nu_t(ds) =
    \|f\|_{L_p} \, \E T_t^{-|\alpha|/2}.
  \end{align*}
  This immediately implies that $P_t^Yf \in W_p^m$ by virtue of the 
  well-known estimate $\|\phi\|_{W_p^m} \lesssim \|\phi\|_{L_p} +
  \sum _{|\alpha| = m}\|\partial^\alpha\phi\|_{L_p}$.
\end{proof}

An analogous result can be obtained in the scale of Bessel potential
spaces $H_p^r(\erre^d)$, $1 < p < \infty$, $r \geq 0$ real, thus
generalizing the previous result, even though the endpoint cases $p=1$
and $p=\infty$ are not included. For the proof we need to recall some
facts about analytic semigroups.
Let $-A$ be a linear operator on a Banach space $E$, generating an
analytic semigroup of contractions $S_t=e^{-tA}$, $t>0$ (see
e.g. \cite{Pazy} for details). Then one has, for any $\alpha \geq 0$ and
$t>0$,
\begin{equation}     \label{eq:frest}
\| A^\alpha S(t)f \|_E \lesssim \frac{1}{t^\alpha} \|f\|_E.
\end{equation}
Letting $p \in ]1,\infty[$, $E=L_p$, and $A=-\Delta$, one can show
that existence of negative moments of the subordinator $T_t$ implies
that $P^Y_t$ maps $L_p$ to a Bessel potential space. 
\begin{prop}
  Let $1<p<\infty$. Assume that there exist $r>0$ and $t>0$ such that
  $\E T_t^{-r/2}<\infty$. Then $P_t^Y(L_p) \subseteq H_p^r$, with
  \[
  \bigl\| P_t^Y \bigr\|_{L_p \to \dot{H}_p^r} \lesssim \E T_t^{-r/2}.
  \]
\end{prop}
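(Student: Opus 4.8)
The plan is to combine the subordination representation of $P^Y$ with the classical smoothing estimate \eqref{eq:frest} for analytic semigroups. As recalled in the proof of Lemma \ref{lm:dens}, one has $P_t^Y f = \int_0^\infty P_s^W f\,\nu_t(ds)$, the integral converging in $L_p(\erre^d)$, where $P^W_s=e^{s\Delta/2}$ is the heat semigroup. Since $\Delta$ generates a bounded analytic semigroup of contractions on $L_p(\erre^d)$ for $1<p<\infty$, I would apply \eqref{eq:frest} with $E=L_p(\erre^d)$, $A=-\tfrac12\Delta$ and $\alpha=r/2$ to get
\[
\bigl\|(-\Delta)^{r/2}P_s^W f\bigr\|_{L_p} \lesssim s^{-r/2}\,\|f\|_{L_p}, \qquad s>0,
\]
the harmless constant $2^{r/2}$ being absorbed into $\lesssim$.

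Next I would integrate this inequality against $\nu_t$. By assumption $\int_0^\infty s^{-r/2}\,\nu_t(ds)=\E T_t^{-r/2}<\infty$, so $s\mapsto(-\Delta)^{r/2}P_s^W f$ is Bochner $\nu_t$-integrable in $L_p$; since $(-\Delta)^{r/2}$ is a closed operator (with domain $H_p^r$) and $s\mapsto P_s^W f$ is itself $\nu_t$-integrable, Hille's theorem for closed operators and Bochner integrals allows us to pull $(-\Delta)^{r/2}$ inside the integral, whence by Minkowski's integral inequality
\[
\bigl\|P_t^Y f\bigr\|_{\dot{H}_p^r} = \Bigl\|\int_0^\infty (-\Delta)^{r/2}P_s^W f\,\nu_t(ds)\Bigr\|_{L_p} \leq \int_0^\infty \bigl\|(-\Delta)^{r/2}P_s^W f\bigr\|_{L_p}\,\nu_t(ds) \lesssim \|f\|_{L_p}\,\E T_t^{-r/2}.
\]

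To upgrade this to membership in $H_p^r$ I would recall that $P_t^Y f=f\ast p_t^Y$ with $\|p_t^Y\|_{L_1}=1$, so $\|P_t^Y f\|_{L_p}\leq\|f\|_{L_p}$ by Young's inequality; the norm equivalence \eqref{eq:homo} then gives $\|P_t^Y f\|_{H_p^r}\eqsim\|P_t^Y f\|_{L_p}+\|P_t^Y f\|_{\dot{H}_p^r}<\infty$, i.e.\ $P_t^Y f\in H_p^r$, together with the stated bound on $\|P_t^Y\|_{L_p\to\dot{H}_p^r}$. The one point requiring care is the commutation of the fractional Laplacian with the $\nu_t$-integral: this is precisely where the hypothesis $\E T_t^{-r/2}<\infty$ is used, since it is what makes $s\mapsto(-\Delta)^{r/2}P_s^W f$ integrable and hence Hille's theorem applicable; the remaining manipulations (scaling of the heat kernel, Minkowski's inequality) are of the same routine nature as those carried out earlier in this section.
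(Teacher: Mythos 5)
Your proof follows essentially the same route as the paper's: apply the analytic-semigroup estimate \eqref{eq:frest} to the heat semigroup $P^W$, integrate against $\nu_t$ via Minkowski's inequality to transfer the bound to $P^Y$, and conclude with the norm equivalence \eqref{eq:homo} and $L_p$-contractivity from Young's inequality. The one place you are more careful than the paper is in invoking Hille's theorem to justify pulling the closed operator $(-\Delta)^{r/2}$ through the $\nu_t$-integral; the paper performs that interchange without comment, so this is a harmless (and welcome) addition of rigor rather than a different argument.
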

\begin{proof}
  Since $-\frac12\Delta$ is the generator of $P^W$, which is an analytic
  semigroup of contractions in $L_p$, estimate \eqref{eq:frest} reads
  \[
  \bigl\| (-\Delta)^{r/2} P^W_tf \bigr\|_{L_p} 
  \lesssim \frac{1}{t^{r/2}} \|f\|_{L_p},
  \]
  which in turn implies
  \begin{align*}
    \bigl\| P_t^Yf \bigr\|_{\dot{H}_p^r} &= 
    \bigl\| (-\Delta)^{r/2} P^Y_t f \bigr\|_{L_p} = \Bigl\|
    (-\Delta)^{r/2} \int_0^\infty P^W_s f\,\nu_t(ds) \Bigr\|_{L_p}\\
    &\leq 
    \int_0^\infty \bigl\| (-\Delta)^{r/2} P^W_s f \bigr\|_{L_p}\,\nu_t(ds)\\
    &\lesssim \|f\|_{L_p} \int_0^\infty \frac{1}{s^{r/2}}\,\nu_t(ds)
    = \|f\|_{L_p} \, \E T_t^{-{r/2}}.
  \end{align*}
  The proof is concluded, upon recalling \eqref{eq:homo} and that the
  semigroup associated to any L\'evy process is contracting in $L_p$
  (the latter fact follows easily by Young's inequality for
  convolutions).
\end{proof}

As a consequence, we partially recover the result of Proposition
\ref{prop:poldo} on mapping properties of $P^Y$ from $L_p$ spaces to
H\"older spaces (note that the endpoint cases $p=1$ and $p=\infty$ are
excluded).
\begin{coroll}     \label{cor:BH}
  Let $1<p<\infty$, $f \in L_p(\erre^d)$, and assume that $\E
  T_t^{-r/2}<\infty$ for some $r\in\erre$ such that
  $\sigma:=r-d/p>0$. Then $P_t^Yf \in C_b^\sigma(\erre^d)$.
\end{coroll}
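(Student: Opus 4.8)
The plan is to combine the mapping property into Bessel potential spaces established just above with the Sobolev-type embedding of Lemma~\ref{lm:Bess}. Since $\sigma:=r-d/p>0$ and $d/p>0$, we have $r>0$, so the hypothesis $\E T_t^{-r/2}<\infty$ places us exactly in the setting of the preceding proposition; together with the contractivity of $P_t^Y$ on $L_p$ and the norm equivalence \eqref{eq:homo}, this shows that $P_t^Y$ maps $L_p(\erre^d)$ boundedly into $H_p^r(\erre^d)$, so in particular $P_t^Yf\in H_p^r(\erre^d)$.

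Next, as $1<p<\infty$ and $r>d/p$, Lemma~\ref{lm:Bess} yields the continuous embedding $H_p^r(\erre^d)\hookrightarrow\mathscr{C}^{\sigma}(\erre^d)$, whence $P_t^Yf\in\mathscr{C}^{\sigma}(\erre^d)$. If $\sigma\notin\enne$, then $\mathscr{C}^{\sigma}(\erre^d)=C_b^{\sigma}(\erre^d)$ (as recalled in \S\ref{sec:prel}, and as already spelled out in Lemma~\ref{lm:Bess}), so the proof is complete in that case.

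The one point needing extra care is the borderline case $\sigma=k\in\enne$, in which $\mathscr{C}^{k}=B^{k}_{\infty,\infty}$ is strictly larger than $C_b^{k}$, so the embedding argument alone does not suffice. Here one instead observes that $r=k+d/p$, so that the assumption reads $\E T_t^{-\frac12(k+d/p)}<\infty$, which is precisely the hypothesis of Corollary~\ref{cor:Lippi}; that corollary then gives directly $P_t^Yf\in C_b^{k}(\erre^d)=C_b^{\sigma}(\erre^d)$. Thus the integer case --- the only genuine obstacle --- is absorbed by the earlier, more hands-on estimate rather than by the scale of Bessel embeddings, and the statement follows in all cases.
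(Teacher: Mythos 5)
Your proposal is correct and follows essentially the same two-pronged route as the paper: the preceding proposition places $P_t^Yf$ in $H_p^r$, Lemma~\ref{lm:Bess} handles the non-integer case $\sigma\notin\enne$, and Corollary~\ref{cor:Lippi} covers the integer case. You also correctly identify and make explicit the reason the integer case must be treated separately (namely that $\mathscr{C}^{k}=B^k_{\infty,\infty}$ is strictly larger than $C_b^k$), a point the paper's one-line proof leaves implicit.
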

\begin{proof}
  If $\sigma \not\in \enne$, the result follows by Sobolev embedding
  (Lemma \ref{lm:Bess}), and by Corollary \ref{cor:Lippi} if $\sigma
  \in \enne$.
\end{proof}

\begin{rmk}
  If $Y$ is a rotationally invariant $\alpha$-stable process on
  $\erre^d$, then the generator of the (analytic) semigroup $P^Y$
  coincides with $(-\Delta)^{\alpha/2}$. Therefore, by well-known
  properties of analytic semigroups, one has $P^Y_t f \in \dom
  ((-\Delta)^{k\alpha})$ for all $k>0$ and $t>0$, i.e. $f \in
  L_p(\erre^d)$, $1<p<\infty$, implies $P^Y_t f \in
  \dot{H}_p^\sigma(\erre^d)$ for all $\sigma>0$ and $t>0$, hence also
  $P^Y_t f \in H_p^\sigma(\erre^d)$ because $P_t^Y$ is contracting in
  $L_p(\erre^d)$. By the Sobolev embedding theorem, this implies that
  $P_t^Yf \in C^\infty_b(\erre^d)$ for all $f \in L_p(\erre^d)$. The
  same conclusion can be reached applying the previous corollary,
  recalling that stable subordinators have finite negative moments of
  all orders (cf. the proof of Corollary \ref{cor:smal}).
\end{rmk}

\subsection{Smoothing properties of $P^Z_t$}
Thus far we have only considered smoothing properties of Markovian
semigroups associated to subordinated Wiener processes. In this
subsection we provide a simple argument which allows to extend the
results of the previous section to a much larger class of semigroups.

In particular, let $Z=Y+\xi$, where $Y=W \circ T$ is a subordinated
Wiener process, and $\xi$ is a further L\'evy process, independent of
$Y$. Then $Z$ is a L\'evy process, and we denote by $P^Z$ its
associated semigroup, defined in the usual way, i.e.
\[
P^Z_tf(x) := \E f(x+Z_t) \equiv \E f(x+Y_t+\xi_t), 
\qquad f\in\B_b(\erre^d). 
\]
This notation will be used throughout this subsectin without further
notice.

\begin{prop} \label{prop:multi-noise}
  Let $t>0$ and assume that $P^Y_t$ is $k$-smoothing. Then $P^Z_t$ is
  also $k$-smoothing. Moreover, if there exists $\beta>0$ such that
  \[
  \bigl\| D^k P^Y_t f (x) \bigr\|_{\LL_k(\erre^d)} \lesssim t^\beta \|f\|_\infty
  \qquad \forall x \in \erre^d,\ \forall f \in \B_b(\erre^d),
  \]
  then the same estimate is satisfied with $P^Y_t$ replaced by $P^Z_t$.
\end{prop}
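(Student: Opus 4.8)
The plan is to use the independence of $Y$ and $\xi$ to write $P^Z_t$ as an average of spatial translates of $P^Y_t$, and then to transfer the smoothing of $P^Y_t$ by differentiating under the expectation sign. An equivalent way to see the same thing is that $P^Z_t = P^\xi_t P^Y_t$, where $P^\xi_t$ is convolution with the law of $\xi_t$, and that convolution with a probability measure preserves $C^k_b(\erre^d)$ together with the relevant bounds on derivatives.

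Concretely, fix $f\in\B_b(\erre^d)$ and set $g:=P^Y_tf$. Since $Y$ and $\xi$ are independent, conditioning on $\xi_t$ gives
\[
P^Z_tf(x) = \E f(x+Y_t+\xi_t) = \E\bigl[\E[f(x+Y_t+\xi_t)\mid\xi_t]\bigr] = \E\, g(x+\xi_t), \qquad x\in\erre^d,
\]
that is, $P^Z_tf = \E\, g(\,\cdot\,+\xi_t)$. Because $P^Y_t$ is $k$-smoothing, $g\in C^k_b(\erre^d)$, so $D^j g$ is bounded and continuous for every $0\le j\le k$. The plan is then to show, by induction on the order of differentiation, that $x\mapsto\E\, g(x+\xi_t)$ belongs to $C^k_b(\erre^d)$ with
\[
D^k P^Z_tf(x) = \E\bigl[(D^k g)(x+\xi_t)\bigr], \qquad x\in\erre^d:
\]
at the step from $j$ to $j+1\le k$ the difference quotients of $D^j g$ are dominated, by a deterministic bound, by $\|D^{j+1}g\|_\infty$ via the mean value inequality, so dominated convergence justifies passing the derivative inside $\E$ and also yields continuity of the resulting map since $D^{j+1}g$ is continuous and bounded. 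This identity shows at once that $P^Z_t$ is $k$-smoothing and that
\[
\bigl\|D^k P^Z_tf(x)\bigr\|_{\LL_k(\erre^d)} \le \E\,\bigl\|(D^k g)(x+\xi_t)\bigr\|_{\LL_k(\erre^d)} \le \bigl\|D^k P^Y_tf\bigr\|_\infty \qquad\forall x\in\erre^d.
\]

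For the quantitative claim, assume $\bigl\|D^k P^Y_tf(x)\bigr\|_{\LL_k(\erre^d)} \lesssim t^\beta\|f\|_\infty$ uniformly in $x\in\erre^d$ and $f\in\B_b(\erre^d)$. Applying this bound to $D^k g = D^k P^Y_tf$ at the random point $x+\xi_t$ and using the last displayed inequality gives
\[
\bigl\|D^k P^Z_tf(x)\bigr\|_{\LL_k(\erre^d)} \le \E\,\bigl\|(D^k P^Y_tf)(x+\xi_t)\bigr\|_{\LL_k(\erre^d)} \lesssim t^\beta\|f\|_\infty,
\]
with the same implicit constant, uniformly in $x\in\erre^d$ and $f\in\B_b(\erre^d)$. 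The only mildly delicate point in the whole argument is the justification of differentiation under $\E$ in the second step; but since $g\in C^k_b(\erre^d)$ all the relevant difference quotients admit integrable ($\omega$-independent) majorants, so dominated convergence applies directly and the induction is routine.
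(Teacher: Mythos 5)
Your proof is correct and follows essentially the same route as the paper's: condition on $\xi_t$ to write $P^Z_t f = \E\,[P^Y_tf(\,\cdot\,+\xi_t)]$, differentiate under the expectation using dominated convergence (justified because $P^Y_tf\in C^k_b$), and conclude via Minkowski's inequality that $\|D^kP^Z_tf\|_\infty\le\|D^kP^Y_tf\|_\infty$, from which the quantitative claim also follows. The only cosmetic difference is that you work with Fr\'echet derivatives throughout and spell out the inductive domination of difference quotients, while the paper phrases the computation in terms of mixed partial derivatives $\partial^\alpha$ and states the appeal to dominated convergence more briefly.
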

\begin{proof}
  Let $f\in \B _b(\erre ^d)$ and $\alpha$ be a multi-index such that
  $|\alpha | \leq k$.  Since $\xi$ is independent of $Y$, we have
  \begin{align*}
    \partial^\alpha P^Z_t f(x)
    &= \partial^\alpha \E f(x + Y_t + \xi_t)\\
    &= \partial^\alpha \E\, \E \bigl[f(x + Y_t + \xi_t) \big| \xi_t\bigr]\\
    &= \partial^\alpha \int_{\erre^d} P_t^Yf(x+y)\,\mu_t(dy),
  \end{align*}
  where $\mu_t$, the distribution of the random variable $\xi_t$, is a
  probability measure on $\erre^d$. Since $P^Y_t f \in
  C^k_b(\erre^d)$, the dominated convergence theorem implies that one
  can differentiate under the integral sign, obtaining
  \begin{equation}\label{eq:multi-noise1}
    \partial^\alpha P^Z_t f(x) = \int_{\erre^d} 
    \bigl[\partial^\alpha P^Yf\bigr](x+y)\,\mu_t(dy).
  \end{equation}
  Let $(x_n)_{n\in\enne}$ a sequence converging to $x$ in
  $\erre^d$. The dominated convergence theorem yields
  \[
  \lim_{x_n \to x} \int_{\erre^d} \bigl[ \partial^\alpha P_t^Yf
  \bigr](x_n+y)\,\mu_t(dy) = \int_{\erre^d} \bigl[ \partial^\alpha
  P_t^Yf \bigr](x+y)\,\mu_t(dy),
  \]
  hence, by \eqref{eq:multi-noise1}, that $\partial^\alpha P_t^Zf$ is
  continuous, or, equivalently, that $P_t^Zf \in
  C^k(\erre^d)$. Moreover, by Minkowski's inequality, one has
  \begin{equation}     \label{eq:minkia}
  \bigl\| \partial^\alpha P_t^Zf \bigr\|_\infty \leq \int_{\erre^d}
  \bigl\| \partial^\alpha P_t^Yf \bigr\|_\infty \,\mu_t(dy) =
  \bigl\| \partial^\alpha P_t^Yf \bigr\|_\infty,
  \end{equation}
  which implies $\partial^\alpha P_t^Zf \in C_b(\erre^d)$, hence also
  $P_t^Zf \in C_b^k(\erre^d)$.
  The second assertion is an obvious consequence of \eqref{eq:minkia}.
\end{proof}

Thanks to the previous proposition, we have the following smoothing
result, generalizing Corollary \ref{cor:bd1}. 

\begin{coroll}\label{cor:smoothY2}
  Let $k \in \enne$ and $t>0$. Assume that $Z=Y+\xi$, where $Y=W
  \circ T$ and
  \begin{equation*}
    \E T_t^{-k/2} < \infty.
  \end{equation*}
  Then $P_t^Z$ is $k$-smoothing, i.e. $P_t^Zf \in C^k_b(\erre^d)$ for
  any $f \in \B_b(\erre^d)$.  Moreover, one has 
  \[
  \bigl\| D^k P_t^Z f (x)\bigr\|_{\LL_k(\erre^d)} \lesssim \|f\|_{\infty}
  \E T_t^{-k/2} \qquad \forall x \in \erre^d,\ \forall f \in \B_b(\erre^d).
  \]
\end{coroll}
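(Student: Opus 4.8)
The plan is to deduce the statement directly from the tools already assembled, using Corollary \ref{cor:bd1} (with $\ell=0$) to control $P^Y_t$, and Proposition \ref{prop:multi-noise} to transfer the smoothing from $P^Y_t$ to $P^Z_t$. First I would observe that, under the assumption $\E T_t^{-k/2}<\infty$, Corollary \ref{cor:smoothY} already gives that $P^Y_t$ is $k$-smoothing; in particular $P^Y_tf\in C^k_b(\erre^d)$ for every $f\in\B_b(\erre^d)$. Next, applying Corollary \ref{cor:bd1} with $\ell=0$, so that $M_0=\|f\|_\infty$, yields the pointwise bound
\[
\bigl\| D^k P^Y_t f(x)\bigr\|_{\LL_k(\erre^d)} \lesssim \|f\|_\infty\,\E T_t^{-k/2}
\qquad \forall x\in\erre^d,\ \forall f\in\B_b(\erre^d),
\]
which is exactly the hypothesis of the second part of Proposition \ref{prop:multi-noise} (with the constant $\E T_t^{-k/2}$ playing the role of $t^\beta$ there — note that $t$ is fixed, so this is just a finite constant depending on $t$).

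Having checked both hypotheses of Proposition \ref{prop:multi-noise}, I would invoke it twice: its first assertion gives that $P^Z_t$ is $k$-smoothing, i.e. $P^Z_tf\in C^k_b(\erre^d)$ for all $f\in\B_b(\erre^d)$; its second assertion gives that the same pointwise derivative estimate persists with $P^Y_t$ replaced by $P^Z_t$, namely
\[
\bigl\| D^k P^Z_t f(x)\bigr\|_{\LL_k(\erre^d)} \lesssim \|f\|_\infty\,\E T_t^{-k/2}
\qquad \forall x\in\erre^d,\ \forall f\in\B_b(\erre^d),
\]
which is the claimed inequality. One minor point worth a sentence is that Proposition \ref{prop:multi-noise} is stated in terms of the Fréchet derivative $D^k$ rather than individual mixed partials $\partial^\alpha$; but this is harmless, since the norm $\|D^kP^Y_tf(x)\|_{\LL_k}$ dominates (up to a dimensional constant) each $|\partial^\alpha P^Y_tf(x)|$ with $|\alpha|=k$, and conversely the full Fréchet derivative is recovered from the mixed partials, so the two formulations are equivalent.

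There is essentially no obstacle here: the corollary is a bookkeeping consequence of Corollary \ref{cor:bd1} and Proposition \ref{prop:multi-noise}, both already proved. The only thing to be mildly careful about is matching the quantifier structure — the estimate in Proposition \ref{prop:multi-noise} must hold uniformly in $x$ and $f$, which it does, with constant $\E T_t^{-k/2}$ finite by hypothesis — and making sure that the hidden constants depend only on $k$ and $d$ (and $t$), not on $f$ or $x$, which is manifest from the proof of Corollary \ref{cor:bd1}. Thus the statement follows immediately.
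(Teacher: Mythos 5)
Your argument is correct and matches what the paper intends: the corollary is stated in the paper without a written proof precisely because it is the immediate combination of Corollary \ref{cor:smoothY} (for the $k$-smoothing of $P^Y_t$), Theorem \ref{thm:bd1} with $p=q=\infty$, $\ell=0$ (for the quantitative bound on $D^kP^Y_t$), and the two assertions of Proposition \ref{prop:multi-noise} (to transfer both to $P^Z_t$), which is exactly the chain you lay out. One small inaccuracy worth tidying: you invoke Corollary \ref{cor:bd1}, but that corollary is stated only for $f\in\B_{b,c}(\erre^d)$ (because for $\ell>0$ the quantity $M_\ell$ may be infinite without compact support); for the claim over all of $\B_b(\erre^d)$ you should either cite Theorem \ref{thm:bd1} directly with $p=q=\infty$, $\ell=0$, or note that the restriction to $\B_{b,c}$ in Corollary \ref{cor:bd1} is vacuous when $\ell=0$ since then $M_0=2\|f\|_\infty<\infty$ for any $f\in\B_b$.
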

The above argument holds even if $\xi$ is not Markovian. Let $\tilde
Z^x_t = x + Y_t + \eta _t$ where $Y$ is a subordinated Wiener process
as above and $\eta$ is \emph{any} stochastic process independent of
$Y$ such that $\eta _0 =0$ almost surely (in particular it is not
necessary to assume that $\eta$ is a L\'evy nor a Markov process). Let
us define, for $t\geq 0$, define a bounded linear operator $A^{\tilde
  Z}_t$ on $\B_b(\erre ^d)$ by
\[
A^{\tilde Z}_t f(x) := \E f(\tilde Z^x_t) = \E f(x + Y_t + \eta _t),
\qquad f\in \B_b(\erre ^d).
\]
We remark that $t \mapsto A^{\tilde Z}_t$ is, in general, not a
semigroup. Nonetheless, the above results still hold in this case,
with exactly the same proofs, and we have the following proposition.
\begin{prop}\label{prop:smoothY3}
  Let $k \in \enne$ and $t>0$. Assume that
  \begin{equation*}
    \E T_t^{-k/2} < \infty.
  \end{equation*}
  Then $A^{\tilde Z}_t$ is $k$-smoothing. Moreover, one has 
  \[
  \bigl\| D^k A^{\tilde Z}_t f (x)\bigr\|_{\LL_k(\erre^d)} \leq C
  \|f\|_{\infty}\E T_t^{-k/2} \qquad \forall x\in \erre ^d,\ 
  \forall f\in B_b(\erre^d),
  \]
  where $C$ is a constant depending only on $d$ and $k$.
\end{prop}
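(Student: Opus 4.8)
The plan is to mimic exactly the proof of Proposition \ref{prop:multi-noise}, replacing the L\'evy process $\xi$ by the arbitrary process $\eta$ and using only the independence of $\eta$ from $Y$ together with the $k$-smoothing of $P^Y_t$ guaranteed by Corollary \ref{cor:smoothY} under the hypothesis $\E T_t^{-k/2}<\infty$. First I would fix $f\in\B_b(\erre^d)$ and a multiindex $\alpha$ with $|\alpha|\leq k$, and condition on $\eta_t$: writing $\mu_t:=\P\circ\eta_t^{-1}$ for the law of the random variable $\eta_t$ (a probability measure on $\erre^d$, since $\eta_t$ is a well-defined $\erre^d$-valued random variable), independence gives
\[
A^{\tilde Z}_t f(x) = \E\,\E\bigl[f(x+Y_t+\eta_t)\,\big|\,\eta_t\bigr]
= \int_{\erre^d} P^Y_t f(x+y)\,\mu_t(dy).
\]
Note that this is the only place the (possibly missing) semigroup/Markov structure of $\eta$ would matter, and it is not needed: all we use is that $\eta_t$ is a random variable independent of $Y_t$ (equivalently of the whole subordinated process), so conditioning is legitimate.

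Next, since $\E T_t^{-k/2}<\infty$, Corollary \ref{cor:smoothY} (or Corollary \ref{cor:bd1} with $\ell=0$, $p=q=\infty$) gives $P^Y_t f\in C^k_b(\erre^d)$ together with the pointwise bound
\[
\bigl\| D^k P^Y_t f(x)\bigr\|_{\LL_k(\erre^d)} \leq C(d,k)\,\|f\|_\infty\,\E T_t^{-k/2}
\qquad\forall x\in\erre^d,
\]
and more generally $\|\partial^\alpha P^Y_t f\|_\infty<\infty$ for every $|\alpha|\leq k$. Hence the integrand $y\mapsto P^Y_t f(x+y)$ is $C^k$ in $x$ with all derivatives up to order $k$ bounded uniformly in $y$ by a $\mu_t$-integrable (indeed constant) function, so the dominated convergence theorem lets us differentiate under the integral sign:
\[
\partial^\alpha A^{\tilde Z}_t f(x) = \int_{\erre^d}\bigl[\partial^\alpha P^Y_t f\bigr](x+y)\,\mu_t(dy),
\qquad |\alpha|\leq k.
\]
A second application of dominated convergence along a sequence $x_n\to x$ (again with the bound $\|\partial^\alpha P^Y_t f\|_\infty$, using continuity of $\partial^\alpha P^Y_t f$) shows $\partial^\alpha A^{\tilde Z}_t f$ is continuous, so $A^{\tilde Z}_t f\in C^k(\erre^d)$; and Minkowski's inequality gives $\|\partial^\alpha A^{\tilde Z}_t f\|_\infty\leq \|\partial^\alpha P^Y_t f\|_\infty<\infty$, so in fact $A^{\tilde Z}_t f\in C^k_b(\erre^d)$, i.e. $A^{\tilde Z}_t$ is $k$-smoothing.

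Finally, for the quantitative bound, I would take $\alpha$ ranging over $|\alpha|=k$ (or argue directly at the level of the full Fr\'echet derivative $D^k$, exchanging $D^k$ with the integral over $\mu_t$ by the same dominated-convergence argument applied to each entry of the multilinear map), obtaining
\[
\bigl\| D^k A^{\tilde Z}_t f(x)\bigr\|_{\LL_k(\erre^d)}
\leq \int_{\erre^d}\bigl\| D^k P^Y_t f(x+y)\bigr\|_{\LL_k(\erre^d)}\,\mu_t(dy)
\leq C(d,k)\,\|f\|_\infty\,\E T_t^{-k/2},
\]
where $C(d,k)$ is exactly the constant from Corollary \ref{cor:bd1}, depending only on $d$ and $k$ (the passage of the norm inside the integral is Minkowski's inequality for the $\LL_k(\erre^d)$-valued integral, and the total mass $\mu_t(\erre^d)=1$ absorbs cleanly). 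There is no real obstacle here: the whole point is that the argument of Proposition \ref{prop:multi-noise} never used the semigroup property of $t\mapsto P^Z_t$ nor the Markovianity of $\xi$ — only independence and the probability-measure property of $\mu_t$ — so it transfers verbatim; the one thing worth stating explicitly is that $\eta_t$ being an honest $\erre^d$-valued random variable (which holds since $\eta$ is an $\erre^d$-valued process) is all that is needed for $\mu_t$ to be a probability measure.
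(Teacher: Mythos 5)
Your proof is correct and is essentially the paper's own argument: the paper simply remarks that Proposition \ref{prop:multi-noise} (and Corollary \ref{cor:smoothY2}) carry over ``with exactly the same proofs,'' which is precisely what you spell out, conditioning on $\eta_t$, differentiating under the integral by dominated convergence, and invoking Corollary \ref{cor:bd1} with $\ell=0$, $p=q=\infty$ for the quantitative bound. Your explicit observation that only independence and the fact that $\mu_t=\P\circ\eta_t^{-1}$ is a probability measure are used is exactly the point the paper is making.
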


\begin{rmk}
  Proposition \ref{prop:smoothY3} implies that smoothing properties
  holds for $A^{\tilde Z}_t$, even if the driving noise is not L\'evy
  process. For example, let $\xi$ be a $\erre ^d$-valued fractional
  Brownian motion. Then, the driving noise is no more L\'evy noise,
  but the estimate in Proposition \ref{prop:smoothY3} holds.
\end{rmk}

\subsection{On smoothing properties of general L\'evy
processes}     \label{ssec:smoothL}
It is well known (and it was used in the proof of Lemma \ref{lm:dens})
that the operation of convolution with an $L_1$ function is strong
Feller. In fact the converse result is true as well, as it was proved
by Hawkes \cite{Hks} (the result was actually already proved, using
different terminology, by Brainerd and Edwards \cite{BrEd}). We state
their result, and provide the (short) proof for completeness.
\begin{prop}[Brainerd and Edwards, Hawkes]
  Let $\mu$ be a finite measure on $\erre^d$, and consider the
  linear operator $A_\mu$ defined as
  \[
  [A_\mu f](x) = \int_{\erre^d} f(x-y)\,\mu(dy).
  \]
  Then the following assertions are equivalent:
  \begin{itemize}
  \item[\emph{(a)}] $\mu$ is absolutely continuous with respect to
    Lebesgue measure;
  \item[\emph{(b)}] $A_\mu$ is strong Feller;
  \item[\emph{(c)}] $A_\mu$ is $c$-strong Feller.
  \end{itemize}
\end{prop}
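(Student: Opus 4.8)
The plan is to prove the equivalences by a cyclic chain $(a) \Rightarrow (b) \Rightarrow (c) \Rightarrow (a)$. The first implication $(a) \Rightarrow (b)$ is essentially contained in the proof of Lemma \ref{lm:dens}: if $\mu(dy) = g(y)\,dy$ with $g \in L_1(\erre^d)$, then $A_\mu f = f \ast g$ with $f \in L_\infty$ and $g \in L_1$, so $\|A_\mu f\|_\infty \leq \|g\|_{L_1}\|f\|_\infty$ by Young's inequality, and continuity follows by approximating $g$ in $L_1$ by functions in $C_c^\infty(\erre^d)$, exactly as in the cited lemma. The implication $(b) \Rightarrow (c)$ is trivial, since $\B_{b,c}(\erre^d) \subseteq \B_b(\erre^d)$.

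The substantive part is $(c) \Rightarrow (a)$, and this is where I expect the main obstacle to lie. The idea is contrapositive: suppose $\mu$ is \emph{not} absolutely continuous, so by the Lebesgue decomposition $\mu = \mu_{ac} + \mu_s$ with $\mu_s \neq 0$ and $\mu_s \perp \text{Leb}$. Then there is a Borel set $N$ with Lebesgue measure zero and $\mu_s(N) = \mu_s(\erre^d) =: c > 0$. One then wants to exhibit a function $f \in \B_{b,c}(\erre^d)$ for which $A_\mu f$ fails to be continuous. The natural candidate is (a localized version of) the indicator $1_N$, or rather $1_{N \cap B}$ for a large ball $B$ capturing most of the mass of $\mu_s$; note this has compact support and is bounded. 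The point is that $[A_\mu 1_{N}](x) = \int 1_N(x-y)\,\mu(dy) = \mu(x - N)$, and since $x - N$ still has Lebesgue measure zero, the absolutely continuous part contributes nothing: $[A_\mu 1_N](x) = \mu_s(x-N)$. The heart of the argument is then the measure-theoretic fact that $x \mapsto \mu_s(x - N)$ cannot be continuous: it equals $c > 0$ at $x = 0$ (since $0 - N \supseteq -N$... more precisely one picks $N$ so that $-N$ carries the mass, i.e. replaces $N$ by $-N$ at the outset), yet it must vanish for Lebesgue-a.e.\ $x$, because $\int_{\erre^d} \mu_s(x-N)\,dx = \int_{\erre^d}\!\!\int_{\erre^d} 1_N(x-y)\,dx\,\mu_s(dy) = |N| \cdot \mu_s(\erre^d) = 0$ by Tonelli. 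A nonnegative function that is positive at a point but zero almost everywhere cannot be continuous, so $A_\mu 1_N \notin C_b(\erre^d)$, contradicting $c$-strong Fellerness.

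The cutoff to genuinely compact support requires a small amount of care: fix $\varepsilon \in (0,c)$ and choose $R$ large enough that $\mu_s(B_R) > c - \varepsilon$, then work with $f := 1_{(-N) \cap B_R} \in \B_{b,c}(\erre^d)$; one has $[A_\mu f](0) = \mu_s((-N)\cap B_R) > c - \varepsilon > 0$ while still $\int [A_\mu f](x)\,dx = 0$, so the same a.e.-vanishing-versus-positive-at-a-point contradiction applies. One should also double-check the harmless measurability point that $x \mapsto \mu(x-N)$ is Borel measurable (so that the Tonelli computation is legitimate), which follows since it is a convolution-type integral of a bounded Borel function against a finite measure. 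Assembling these pieces closes the cycle and yields the stated equivalence of (a), (b), and (c).
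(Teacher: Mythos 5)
Your proof is correct and follows essentially the same route as the paper: the implications $(a)\Rightarrow(b)\Rightarrow(c)$ are handled identically, and for $(c)\Rightarrow(a)$ both arguments hinge on the observation that $A_\mu 1_N$ vanishes Lebesgue-a.e.\ when $|N|=0$ (you via Tonelli, the paper via duality against $C_c^\infty$ test functions, which amounts to the same computation), the only cosmetic difference being that you argue by contrapositive with the Lebesgue decomposition while the paper argues directly that $\mu$ annihilates null sets. One small slip in the cutoff step: $[A_\mu 1_{(-N)\cap B_R}](0)=\mu\bigl(N\cap B_R\bigr)$, not $\mu_s\bigl((-N)\cap B_R\bigr)$, but since $\mu_s$ is concentrated on $N$ this still exceeds $c-\varepsilon$, so the conclusion stands.
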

\begin{proof}
  (a) implies (b): let us denote, with an harmless abuse of notation, the density of $\mu$ again by $\mu$. Then $\mu \in
  L_1$, hence $A_\mu f = f \ast \mu$, and, as already recalled above,
  $A_\mu$ maps $\B_b$ to $C_b$. Obviously (b) implies (c). In order to
  conclude, we only have to show that (c) implies (a). Define the
  measure $\tilde{\mu}$ by $B \mapsto \tilde{\mu}(B):=\mu(-B)$, and
  observe that the linear operator
  \[
  [\tilde{A}_\mu f](x) := \int_{\erre^d} f(x-y)\,\tilde{\mu}(dy)
  \]
  is the formal adjoint of $A_\mu$, in the sense that, for any $f$, $g
  \in \B_b$, one has
  \[
  \int_{\erre^d} A_\mu f\,g = \int_{\erre^d} f\,\tilde{A}_\mu g
  \]
  Assume that $B \subset \erre^d$ has zero Lebesgue measure, and write
  $B=\cup_{n\in\enne} B_n$, with $B_n:=B \cap E_n$, where
  $\cup_{n\in\enne} E_n=\erre^d$, and $E_n$ is bounded for each
  $n\in\enne$. Let $n \in \enne$ be arbitrary but fixed, and set
  $f:=1_{B_n}$. Then $f \in \B_{b,c}$ and thus, by hypothesis, $A_\mu f \in
  C_b$ and
  \[
  \int_{\erre^d} A_\mu f\,g = 0 \qquad \forall g \in C^\infty_c,
  \]
  hence $A_\mu f(x)=0$ for almost all $x \in \erre^d$ with respect to the Lebesgue measure. In particular,
  \[
  0 = A_\mu f(0) = \int_{\erre^d} 1_{B_n}\,d\mu = \mu(B_n),
  \]
  thus also $\mu(B)=0$ because $B$ is the union of countably many sets
  of $\mu$-measure zero.
\end{proof}
\begin{rmk}
  (i) The assumption that $\mu$ is a finite measure is essential in
  the previous proposition. In fact, if one just assumes that $\mu$ has a
  density in $L_{1,\mathrm{loc}}$, it is easily seen that (a) does not
  imply (b), and that (b) actually implies $\mu \in L_1$ (e.g. choosing
  $f \equiv 1$). Moreover, without assuming that $\mu$ is a finite
  measure, (c) does indeed only imply $\mu \in L_{1,\mathrm{loc}}$.

  \smallskip

  (ii) As a consequence of the above proposition, Hawkes \cite{Hks}
  shows that the semigroup generated by a (finite dimensional) L\'evy
  process is strong Feller if and only if its transition densities are
  absolutely continuous with respect to Lebesgue measure and that, in
  this case, the densities are lower semicontinuous.
\end{rmk}
An immediate, but nonetheless quite useful consequence of the previous
proposition is the following.
\begin{coroll}
  Let $k \in \mathbb{N} \cup \{0\}$. If $\mu$ admits a density
  belonging to $W_1^k$, then $A_\mu$ is $k$-smoothing.
\end{coroll}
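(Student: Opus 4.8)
The plan is to exploit that $A_\mu f = f \ast \mu$ together with the already-recalled fact (used in the proof of Lemma \ref{lm:dens} and at the beginning of \S\ref{ssec:smoothL}) that convolution with an $L_1$ function is strong Feller. Writing $g$ for the density of $\mu$, the hypothesis $g \in W_1^k(\erre^d)$ means that the weak derivatives $\partial^\alpha g$ belong to $L_1(\erre^d)$ for every multiindex $\alpha$ with $|\alpha| \leq k$. The heart of the matter is therefore to show that, for $f \in \B_b(\erre^d)$, the function $A_\mu f = f \ast g$ is classically $k$-times continuously differentiable, with
\[
\partial^\alpha (f \ast g) = f \ast \partial^\alpha g, \qquad |\alpha| \leq k.
\]
Once this is available the conclusion is immediate: each $\partial^\alpha g$ lies in $L_1(\erre^d)$ and $f \in L_\infty(\erre^d)$, so $f \ast \partial^\alpha g \in C_b(\erre^d)$ by the strong Feller property of convolution with $L_1$ functions, whence $A_\mu f \in C^k_b(\erre^d)$, i.e. $A_\mu$ is $k$-smoothing.

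To establish the displayed identity I would argue by mollification. Pick a sequence $(g_n)_{n\in\enne} \subset C^\infty_c(\erre^d)$ with $g_n \to g$ in $W_1^k(\erre^d)$, i.e. $\partial^\alpha g_n \to \partial^\alpha g$ in $L_1(\erre^d)$ for all $|\alpha| \leq k$. Then $f \ast g_n \in C^\infty(\erre^d)$ and, by the standard properties of convolution with a smooth compactly supported function, $\partial^\alpha(f \ast g_n) = f \ast \partial^\alpha g_n$ in the classical sense. Young's inequality gives
\[
\bigl\| f \ast \partial^\alpha g_n - f \ast \partial^\alpha g \bigr\|_\infty
\leq \|f\|_\infty \, \bigl\| \partial^\alpha g_n - \partial^\alpha g \bigr\|_{L_1}
\xrightarrow{n\to\infty} 0,
\]
so that, for every $|\alpha| \leq k$, the sequence $\partial^\alpha(f \ast g_n)$ converges uniformly to $f \ast \partial^\alpha g$. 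Since uniform convergence of functions together with uniform convergence of all their partial derivatives up to order $k$ implies convergence in $C^k_b(\erre^d)$, and the derivatives of the limit are the limits of the derivatives, we conclude that $f \ast g \in C^k_b(\erre^d)$ with $\partial^\alpha(f \ast g) = f \ast \partial^\alpha g$, as claimed.

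The only point requiring a little attention — and the main (modest) obstacle — is precisely this interchange of differentiation and convolution when $f$ is merely bounded and Borel measurable rather than continuous. The mollification argument above circumvents any direct differentiation of the integral $\int_{\erre^d} f(x-y) g(y)\,dy$ by transferring all the derivatives onto the smooth factor $g_n$ and then passing to the limit in the $C^k_b(\erre^d)$ topology, using the $L_1$-continuity of $g \mapsto f \ast g$. For $k=0$ one has $W_1^0(\erre^d)=L_1(\erre^d)$ and the statement reduces to the already-recalled fact that convolution with an $L_1$ function is strong Feller, so it is indeed a genuine generalization thereof.
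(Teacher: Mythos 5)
Your proof is correct and follows the same route the paper intends when it labels the corollary an ``immediate'' consequence: write $g$ for the density, use $\partial^\alpha g \in L_1$ for $|\alpha|\leq k$, justify $\partial^\alpha(f\ast g) = f\ast\partial^\alpha g$, and invoke the fact that convolution with an $L_1$ function maps $\B_b$ to $C_b$. The paper omits the mollification argument for the derivative-convolution interchange; you supply it carefully and correctly.
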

Unfortunately, however, it is not possible to assert that if $A_\mu$
is $k$-smoothing, then $\mu \in W_1^k$. This follows immediately from
Theorem \ref{thm:KnoSchi} below.

\medskip

The following result by Knopova and Schilling \cite{KnoSchi}, which
extends a ``classical'' criterion by Hartman and Wintner \cite{HaWi},
says, among other things, that the transition semigroup of a L\'evy
process is $1$-smoothing at all (positive) times if and only if it is
$\infty$-smoothing at all (positive) times.
\begin{thm}[\cite{KnoSchi}]     \label{thm:KnoSchi}
  Let $Y$ be an $\erre^d$-valued L\'evy process without Gaussian
  component, with characteristic function
  $\E\exp\big(it\ip{\xi}{Y_t}\big)=:e^{-t\psi(\xi)}$ and transition
  kernel $\pi_t: B \mapsto \P(Y_t \in B)$. The following assertions
  are equivalent:
  \begin{itemize}
  \item[\emph{(a)}] It holds
    \begin{equation}     \label{eq:HW}
    \lim_{|\xi|\to\infty} \frac{\operatorname{Re}\,\psi(\xi)}{\log
      (1+|\xi|)} = \infty;
    \end{equation}
  \item[\emph{(b)}] $\pi_t$ is absolutely continuous for all $t>0$,
    with density $p^Y_t$ such that $\partial^\alpha p^Y_t \in L_1
    \cap C_0$ for any multiindex $\alpha \geq 0$;
  \item[\emph{(c)}] $\pi_t$ is absolutely continuous for all $t>0$, with
    density $p^Y_t$ such that $\nabla p^Y_t \in L_1$.
  \end{itemize}
  Moreover, if there exists an increasing function $g$ such that
  $\psi(\xi)=g(|\xi|^2)$, then the above assertions are also
  equivalent to the following ones:
  \begin{enumerate}
  \item[\emph{(d)}] $\pi_t$ is absolutely continuous for all $t>0$,
    with density $p^Y_t \in C_0$;
  \item[\emph{(e)}] $\pi_t$ is absolutely continuous for all $t>0$,
    with density $p^Y_t \in L_\infty$;
  \item[\emph{(f)}] $e^{-t\psi} \in L_1$ for all $t>0$.
  \end{enumerate}
\end{thm}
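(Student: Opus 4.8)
The plan is to establish the two equivalence cycles $(a)\Rightarrow(b)\Rightarrow(c)\Rightarrow(a)$ and, under the additional hypothesis, $(a)\Rightarrow(d)\Rightarrow(e)\Rightarrow(f)\Rightarrow(a)$, working entirely on the Fourier side and using that $\bigl|e^{-t\psi(\xi)}\bigr|=e^{-t\operatorname{Re}\psi(\xi)}$ with $\operatorname{Re}\psi\ge 0$ (no Gaussian part). The one structural device I would use throughout is the splitting of $Y$ into its ``big-jump'' and ``small-jump'' parts: write $Y=Y^{(1)}+Y^{(2)}$, where $Y^{(1)}$ is the compound Poisson process collecting the jumps of modulus $\ge 1$ and $Y^{(2)}$ is the independent remainder (the drift together with the compensated small jumps). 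Then $\psi=\psi^{(1)}+\psi^{(2)}$ and $\pi_t=\pi^{(1)}_t\ast\pi^{(2)}_t$. Writing $\nu$ for the L\'evy measure of $Y$, one has $\operatorname{Re}\psi^{(1)}(\xi)=\int_{|y|\ge1}(1-\cos\ip{\xi}{y})\,\nu(dy)\le 2\nu(\{|y|\ge1\})<\infty$, so condition $(a)$ holds for $\psi$ if and only if it holds for $\psi^{(2)}$; moreover the L\'evy measure of $Y^{(2)}$ is supported in $\{|y|<1\}$, hence $\psi^{(2)}\in C^\infty(\erre^d)$ with $\bigl|\partial^\beta\psi^{(2)}(\xi)\bigr|\lesssim 1+|\xi|$ for every multiindex $\beta$. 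This smoothness is the crucial point, and handling the lack of it for $\psi$ itself (e.g.\ when $\nu$ has heavy tails) is the main obstacle I expect; the big/small-jump decomposition is exactly what removes it.

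For $(a)\Rightarrow(b)$, fix $t>0$. Condition $(a)$ for $\psi^{(2)}$ means that for every $N$ there is $R_N$ with $\operatorname{Re}\psi^{(2)}(\xi)\ge N\log(1+|\xi|)$ for $|\xi|\ge R_N$, hence $e^{-t\operatorname{Re}\psi^{(2)}(\xi)}\le(1+|\xi|)^{-tN}$ there. Combining this with the polynomial bounds on the derivatives of $\psi^{(2)}$ and Fa\`a di Bruno's formula one gets $\bigl|\partial_\xi^\gamma\bigl((i\xi)^\alpha e^{-t\psi^{(2)}(\xi)}\bigr)\bigr|\lesssim Q_{\alpha,\gamma}(|\xi|)\,e^{-t\operatorname{Re}\psi^{(2)}(\xi)}$ for some polynomial $Q_{\alpha,\gamma}$, so $\partial_\xi^\gamma\bigl((i\xi)^\alpha e^{-t\psi^{(2)}}\bigr)\in L_1(\erre^d)$ for all $\alpha,\gamma$ (choose $N$ large). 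By Fourier inversion, $\pi^{(2)}_t$ has a density $p_t^{Y^{(2)}}\in\mathscr{S}(\erre^d)$. Since $\pi_t=\pi^{(1)}_t\ast\pi^{(2)}_t$ with $\pi^{(1)}_t$ a finite (probability) measure, $\pi_t$ has density $p_t^Y=\pi^{(1)}_t\ast p_t^{Y^{(2)}}$ and $\partial^\alpha p_t^Y=\pi^{(1)}_t\ast\partial^\alpha p_t^{Y^{(2)}}$; as $\partial^\alpha p_t^{Y^{(2)}}\in\mathscr{S}\subset L_1\cap C_0$, Young's inequality ($\|\mu\ast f\|_{L_1}\le\|\mu\|_{\mathrm{TV}}\|f\|_{L_1}$) yields $\partial^\alpha p_t^Y\in L_1$, and dominated convergence yields $\partial^\alpha p_t^Y\in C_0$. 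This is $(b)$.

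The implication $(b)\Rightarrow(c)$ is trivial. For $(c)\Rightarrow(a)$: since $p_t^Y$ is a density, $\widehat{p_t^Y}=e^{-t\psi}$, and $\partial_j p_t^Y\in L_1$ for each $j$ gives, by Riemann--Lebesgue, $\xi_j e^{-t\psi(\xi)}\to 0$ as $|\xi|\to\infty$, hence $|\xi|\,e^{-t\operatorname{Re}\psi(\xi)}\to 0$. Taking logarithms, $\operatorname{Re}\psi(\xi)/\log(1+|\xi|)\ge 1/t+o(1)$; since $(c)$ is assumed for all $t>0$ and $t$ is arbitrary, $\operatorname{Re}\psi(\xi)/\log(1+|\xi|)\to\infty$, which is $(a)$, closing the first cycle.

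Finally, assume $\psi(\xi)=g(|\xi|^2)$ with $g$ increasing. Then $(a)\Rightarrow(d)$ is already contained in $(b)$ (which gives $p_t^Y\in C_0$), $(d)\Rightarrow(e)$ is trivial, and $(e)\Rightarrow(f)$ holds because $p_t^Y\in L_1\cap L_\infty\subset L_2$, so $e^{-t\psi}=\widehat{p_t^Y}\in L_2$ by Plancherel, i.e.\ $e^{-2t\operatorname{Re}\psi}\in L_1$, i.e.\ $e^{-2t\psi}\in L_1$, and $t>0$ is arbitrary. Monotonicity of $g$ enters only in $(f)\Rightarrow(a)$: here $\operatorname{Re}\psi(\xi)$ depends only on $|\xi|$ and is nondecreasing in $|\xi|$, so if $(a)$ failed there would be $M<\infty$ and $|\xi_n|\uparrow\infty$ with $\operatorname{Re}\psi(\xi_n)\le M\log|\xi_n|$; passing to a subsequence with $|\xi_{n+1}|\ge|\xi_n|^2$ and using monotonicity on each annulus $|\xi_n|\le|\xi|\le|\xi_{n+1}|$, one finds $\int_{\erre^d}e^{-t\operatorname{Re}\psi}\gtrsim\sum_n|\xi_{n+1}|^{\,d-tM}=\infty$ for every $t<d/M$, contradicting $(f)$. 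Hence $(f)\Rightarrow(a)$, closing the second cycle and, since $(a)$ appears in both, completing all the equivalences.
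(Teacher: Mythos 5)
The paper gives no proof of this theorem: it is stated verbatim as a cited result from Knopova and Schilling \cite{KnoSchi} (with the remark that it extends the Hartman--Wintner criterion \cite{HaWi}), so there is no in-paper argument to compare yours against.

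On its own merits, your sketch is essentially correct and follows what is, to my knowledge, the standard route: the big/small-jump decomposition $Y=Y^{(1)}+Y^{(2)}$, smoothness of $\psi^{(2)}$ (Lévy measure compactly supported near the origin) together with the logarithmic lower bound from (a) to get a Schwartz density for $Y^{(2)}$, convolution with the finite compound-Poisson law $\pi_t^{(1)}$ to pass to $Y$, and the Riemann--Lebesgue argument for $(c)\Rightarrow(a)$. The second cycle via Plancherel and the annulus argument for $(f)\Rightarrow(a)$ using radial monotonicity of $\psi$ is also sound. One small inaccuracy: the claimed bound $\lvert\partial^\beta\psi^{(2)}(\xi)\rvert\lesssim 1+\lvert\xi\rvert$ ``for every multiindex $\beta$'' is false at $\beta=0$, where $\lvert\psi^{(2)}(\xi)\rvert$ can grow like $\lvert\xi\rvert^2$; however, Fa\`a di Bruno applied to $e^{-t\psi^{(2)}}$ only ever uses derivatives of $\psi^{(2)}$ of order $\lvert\beta\rvert\ge 1$ (where your bound is correct), so the argument is unaffected.
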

\begin{ex}[Stable processes]
  If $Y$ is an isotropic stable process, then
  $\psi(\xi)=|\xi|^\alpha$, so that (\ref{eq:HW}) is clearly
  satisfied, and we get that the transition density of $Y_t$ belongs
  to $C^\infty_b$ at all positive times, hence $P^Y_t$, $t>0$, is
  infinitely smoothing. We reached exactly the same conclusion by our
  method based on estimates of negative moments of stable
  subordinators, cf. Corollary \ref{cor:smal}.
\end{ex}
\begin{ex}[Variance-gamma processes]
  Let us consider the variance-gamma process $Y$ defined in Example
  \ref{ex:vg}. We have $\E e^{-\lambda T_t} = e^{-t\Phi(\lambda)}$,
  with
  \[
  \Phi(\lambda) = a\log\big( 1+\lambda/b \big)
  \]
  (see e.g. \cite[p.~73]{Bertoin}), thus also
  \[
  \E e^{i\ip{\xi}{W(T_t)}} = e^{-t\psi(\xi)}, \qquad \psi(\xi) =
  \Phi(|\xi|^2) = a\log\big( 1+|\xi|^2/b \big),
  \]
  that is
  \[
  \E e^{i\ip{\xi}{(W(T_t)}} = \big( 1+|\xi|^2/b \big)^{-at}.
  \]
  In particular, we have
  \begin{equation}     \label{eq:HW-vg}
  \liminf_{|\xi|\to\infty} \frac{\operatorname{Re}\,\psi(\xi)}{\log
    (1+|\xi|)} = 2a,
  \end{equation}
  hence the density of the variance-gamma process (when it exists) is
  not in $C^\infty_b$ for all $t>0$.  Let us also recall that Hartman
  and Wintner \cite{HaWi} proved that if there exists $t_0 \geq 0$
  such that
  \[
  \liminf_{|\xi|\to\infty} \frac{\operatorname{Re}\,\psi(\xi)}{\log
    (1+|\xi|)} > \frac{d}{t_0},
  \]
  then $\pi_t$ is absolutely continuous for all $t \geq t_0$ with
  density $p^Y_t \in L_1 \cap C_0$. Therefore \eqref{eq:HW-vg}
  implies that the variance-gamma process $Y$ admits a density in $L_1
  \cap C_0$ for all $t > d/(2a)$. As a matter of fact this condition
  is sharp, as one can verify by a direct calculation: since
  \[
  p^Y_t(x) \propto \int_0^\infty e^{-|x|^2/(2s)}
  s^{at-1-d/2}e^{-bs}\,ds,
  \]
  it is easily seen that the integral is finite for all $x \neq 0$,
  while for $x=0$ it is finite if and only if $t>d/(2a)$. In other
  words, the density of the variance-gamma process has a singularity
  at the origin for $t \leq d/(2a)$. Equivalently, since $\xi \mapsto
  e^{-t\psi(\xi)} \in L_1$ for all $t>d/(2a)$, one could also conclude
  by the Riemann-Lebesgue lemma that $p^Y_t \in L_1 \cap C_0$ for all
  $t>d/(2a)$. Our method using subordination instead yields that $p^Y_t
  \in L_1$ and $\nabla p^Y_t \in L_1$ for all $t>1/(2a)$. Of course
  these properties do not imply that $p^Y_t \in C_0$.
\end{ex}
\begin{rmk}
  The discussion of the variance-gamma process in the above example
  can be generalized in a rather straightforward way to geometric
  strictly $\alpha$-stable processes, for which
  $\psi(\xi)=\log(1+|\xi|^\alpha)$. See e.g. \cite[ch.~5]{BoBy} for
  more information about this class of processes.
\end{rmk}

As it will become clear in the next section, the above results, while
powerful and interesting in their own right, seem to be of little help
for establishing smoothing properties of the semigroup generated by
the solution to a SDE driven by a L\'evy process. The main obstruction
is of course that the transition kernels of the solution to a SDE are
not translation invariant. Moreover the law of the solution is not
infinitely divisible, with the exception of very simple
situations. Even restricting our attention to proving smoothing
properties of semigroups generated by L\'evy processes, our approach
through subordination and negative moments of the subordinator is, in
general, not comparable to the criteria quoted above. In fact, while
we can cover only a particular class of L\'evy processes, our results
give smoothing estimates that can depend on time. Most importantly,
our method gives explicit estimates on the rate of blow-up as $t \to
0$ of the norm of $P_t^Yf$, $f \in \B_b$, in spaces of type
$C^k_b$. As we shall see, this is essential for the developments in
the next section.


\section{Smoothing for the SDE (\ref{eq:sde})}
\label{sec:main}
The main result of this section is the following theorem, where we
establish the strong Feller property of the semigroup associated to
the solution of an SDE driven by a subordinated Wiener process,
assuming that the subordinator satisfies a suitable integrability
condition.

Throughout this section we shall tacitly assume that $P_t^X$ is Feller
for all $t \geq 0$. As is well known, this is always the case if the
solution to (\ref{eq:sde}) depends continuously on the initial datum
$x$. This condition is satisfied very often, e.g. when the drift term
$b$ is Lipschitz continuous, or, more generally, when $b$ is
continuous, dissipative, and with polynomial growth.

We first consider the case that $\xi=0$, so that the noise is a L\'evy
process obtained as subordination of a Wiener process.
\begin{thm} \label{thm:drift}
  Let $\ell \geq 0$. Assume that $\xi =0$, there exists
  $\delta>0$ such that
  \begin{equation}     \label{eq:11}
    \int_0^\delta \E\bigl( T_s^{-1/2} + T_s^{(\ell-1)/2} \bigr) \,ds < \infty,
  \end{equation}
  and $x \mapsto b(x)(1+|x|)^{-\ell} \in C_b(\erre^d)$. Then $P^X_t$
  is strong Feller for all $t>0$.
\end{thm}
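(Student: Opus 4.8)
The plan is a perturbation argument based on Duhamel's formula, combining the gradient estimates for $P^Y$ from Corollary~\ref{cor:bd1} with the polynomial growth of $b$. Since $P^X_t$ is assumed Feller, Lemma~\ref{lm:Feller} reduces the assertion to proving that $P^X_t$ is $c$-strong Feller, so from now on I fix $f \in \B_{b,c}(\erre^d)$ and aim to show that $P^X_t f \in C_b(\erre^d)$; write $M_\ell := \|f\|_\infty + \sup_{y} |y|^\ell |f(y)| < \infty$. The compact support of $f$ is precisely what makes the weighted bounds of Corollary~\ref{cor:bd1} available, which is why this reduction is needed.

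First I would record the basic estimate on the integrand that will appear below. Observe that \eqref{eq:11} forces $\E T_s^{-1/2} < \infty$ for every $s>0$: the map $s \mapsto \E T_s^{-1/2}$ is non-increasing because $T$ has non-decreasing paths, and \eqref{eq:11} makes it finite on a set accumulating at $0$. Hence, applying Corollary~\ref{cor:bd1} with $k=1$ (once with the given $\ell$ and once with $\ell = 0$), together with the elementary inequality $|b(x)| \lesssim (1+|x|)^\ell \lesssim 1 + |x|^\ell$, one obtains
\[
g(s) := \bigl\| \ip{b}{\nabla P^Y_s f} \bigr\|_\infty \lesssim M_\ell\, \E\bigl[ T_s^{-1/2} + T_s^{(\ell-1)/2} \bigr],
\]
and \eqref{eq:11} says exactly that $g \in L_1(0,\delta)$. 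Moreover $P^Y_s f \in C^1_b(\erre^d)$ by Corollary~\ref{cor:smoothY}, and $b$ is continuous, so $\ip{b}{\nabla P^Y_s f} \in C_b(\erre^d)$ for every $s>0$; the fact that this function is bounded uniformly in $x$ is crucial, as it lets $P^X_{t-s}$ be applied to it with no integrability requirement on $X^x$.

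The core of the proof is the Duhamel identity, valid for $0 < t \le \delta$,
\[
P^X_t f = P^Y_t f + \int_0^t P^X_{t-s}\bigl( \ip{b}{\nabla P^Y_s f} \bigr)\, ds .
\]
Heuristically it is obtained by differentiating $u \mapsto \E\bigl[ P^Y_{t-u} f(X^x_u) \bigr]$ on $[0,t]$: writing $L^Y$ for the generator of $Y$ and $L^X = L^Y + \ip{b}{\nabla\,\cdot\,}$ for that of $X$, It\^o's formula produces the term $\partial_u P^Y_{t-u} f + L^X P^Y_{t-u} f$, and since $\partial_u P^Y_{t-u} f = -L^Y P^Y_{t-u} f$ the nonlocal parts cancel, leaving only $\ip{b}{\nabla P^Y_{t-u} f}$; integrating in $u$ and substituting $s = t - u$ gives the formula, the value at $u = t$ being $P^Y_0 f = f$. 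I expect the rigorous justification of this step to be the main obstacle. For $f \in C^\infty_c(\erre^d)$ it is classical, since then $P^Y_\tau f \in C^\infty_b(\erre^d)$ belongs to the domain of $L^Y$; for a general $f \in \B_{b,c}(\erre^d)$, however, $P^Y_\tau f$ is in general only of class $C^1_b$, which is not enough to apply It\^o's formula to it when $Y$ has infinite variation. One therefore has to pass to the limit along smooth compactly supported approximations of $f$, using the bound $g$ to dominate the correction terms and the fact that $Y$, hence $X$, almost surely does not jump at the fixed time $t$ (so that the boundary term at $u = t$ behaves correctly); a convenient intermediate step is to read off from the smooth case, via Fubini's theorem, that the law of $X^x_t$ is absolutely continuous, after which the passage to general $f \in \B_{b,c}(\erre^d)$ follows by dominated convergence.

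Granting the identity, the conclusion is routine. The term $P^Y_t f$ lies in $C_b(\erre^d)$ because $P^Y_t$ is strong Feller (Lemma~\ref{lm:dens}). For each $s \in (0,t)$ we have $\ip{b}{\nabla P^Y_s f} \in C_b(\erre^d)$, hence $P^X_{t-s}\bigl( \ip{b}{\nabla P^Y_s f} \bigr) \in C_b(\erre^d)$ by the Feller property, with supremum norm at most $g(s)$ since $P^X$ is a contraction; as $g \in L_1(0,t)$, dominated convergence shows that $x \mapsto \int_0^t P^X_{t-s}\bigl( \ip{b}{\nabla P^Y_s f} \bigr)(x)\, ds$ is continuous, and it is bounded by $\int_0^t g$. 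Therefore $P^X_t f \in C_b(\erre^d)$ for every $0 < t \le \delta$. For $t > \delta$ one writes $P^X_t f = P^X_{t-\delta/2}\bigl( P^X_{\delta/2} f \bigr)$: here $P^X_{\delta/2} f \in C_b(\erre^d)$ by the case already treated, and $P^X_{t-\delta/2}$ maps $C_b$ into $C_b$ by the Feller property, so again $P^X_t f \in C_b(\erre^d)$. Thus $P^X_t$ is $c$-strong Feller for every $t>0$, and Lemma~\ref{lm:Feller} yields the strong Feller property. As anticipated, the only genuinely delicate point is the rigorous derivation of the Duhamel identity at the level of $\B_{b,c}$ data, given the limited ($C^1_b$) regularity available for $P^Y_s f$; everything else is bookkeeping with the estimates established above and the Feller and contraction properties of $P^X$.
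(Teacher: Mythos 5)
Your proof is correct and follows essentially the same route as the paper: Duhamel's formula, the weighted gradient estimates from Theorem~\ref{thm:bd1}/Corollary~\ref{cor:bd1} applied to $f \in \B_{b,c}$, the splitting of $|b(x)|\,|\nabla P^Y_s f(x)|$ via $(1+|x|)^\ell \lesssim 1 + |x|^\ell$, and the reduction to $c$-strong Feller via Lemma~\ref{lm:Feller} with the semigroup argument for $t>\delta$. The paper likewise takes \eqref{eq:Duhamel} as given without further justification, so your extended discussion of how one would derive it rigorously is a fair caveat about the paper's own argument rather than a gap in your proposal; your explicit dominated-convergence step for the continuity of the integral term is if anything slightly more careful than the paper's phrasing.
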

\begin{proof}
  Let $f \in \B_{b,c}$ and $t>0$. That $x \mapsto P_t^Xf(x)$ is
  bounded is immediate by the stochastic representation $P_t^Xf(x)=\E
  f(X_t^x)$.  Therefore we just have to prove that $P_t^Xf$ is
  continuous. Assumption \eqref{eq:11} implies that $\E T_s^{-1/2}\,ds
  < \infty$ for all $s \in ]0,\delta[$, therefore, by Corollary
  \ref{cor:smoothY}, $P_s^Y$ is $1$-smoothing for all $s \in
  ]0,\delta[$. In particular, one has $DP_s^Yf \in C_b$ and, by virtue
  of Theorem \ref{thm:bd1},
  \begin{equation}     \label{eq:33}
  \bigl\| DP_s^Yf \bigr\|_\infty \lesssim \|f\|_\infty \, \E T_s^{-1/2}
  \end{equation}
  for all $s \in ]0,\delta[$.  For any $t \in ]0,\delta[$,
  we have, by Duhamel's formula,
  \begin{equation}\label{eq:Duhamel}
    P^X_tf = P^Y_tf + \int_0^t P^X_{t-s} \bip{b}{DP^Y_{s}f}\,ds.
  \end{equation}
  Since $P^Y_tf \in C^1_b$, it is enough to prove that the integral on
  the right-hand side is a continuous function. Note that $P^Y_tf \in
  C^1_b$ and $b \in C$ imply $\bip{b}{DP^Y_{s}f} \in C$, hence we have
  that $P_t^X$ is $c$-strong Feller, if we can show that the sup-norm
  of the integral is finite.
  To this purpose, note that we can write
  \begin{align*}
  \bip{b}{DP^Y_{s}f}(x) &= \bip{b(x)(1+|x|)^{-\ell}}{(1+|x|)^\ell DP^Y_sf(x)}\\
  &\lesssim \bip{b(x)(1+|x|)^{-\ell}}{DP^Y_sf(x)}
  + \bip{b(x)(1+|x|)^{-\ell}}{|x|^\ell DP^Y_sf(x)}\\
  &=: I^1_s + I^2_s,
  \end{align*}
  hence also
  \[
  \biggl\| \int_0^t P^X_{t-s} \bip{b}{DP^Y_{s}f}\,ds \biggr\|_\infty 
  \lesssim \int_0^t \bigl\| P^X_{t-s} (I^1_s+I^2_s) \bigr\|_\infty\,ds.
  \]
  Since $b(1+|\cdot|)^{-\ell} \in C_b$, taking \eqref{eq:33} into
  account, and recalling that $P^X_t$ is contracting in $L_\infty$
  because it is Markovian, we obtain
  \[
  \int_0^t \bigl\| P^X_{t-s} I^1_s \bigr\|_\infty\,ds \lesssim
  \|f\|_\infty \, \bigl\| b(1+|\cdot|)^{-\ell} \bigr\|_\infty
  \int_0^t \E T_s^{-1/2}\,ds,
  \]
  which is finite by hypothesis. Analogously, appealing to Theorem
  \ref{thm:bd1}, we have
  \[
  \int_0^t \bigl\| P^X_{t-s} I^2_s \bigr\|_\infty\,ds \lesssim
  \bigl( \|f\|_\infty + \bigl\| |\cdot|^\ell f \bigr\|_\infty \bigr) \,
  \bigl\| b(1+|\cdot|)^{-\ell} \bigr\|_\infty
  \int_0^t \E\bigl(T_s^{-1/2} + T_s^{(\ell-1)/2}\bigr)\,ds,
  \]
  which is finite by hypothesis, recalling that $f \in \B_{b,c}$. We
  have thus established that $P^X_t$ maps $\B_{b,c}$ to $C_b$ for all
  $t \in ]0,\delta[$. Lemma \ref{lm:Feller} implies that $P^X_t$ is
  strong Feller for all $t \in ]0,\delta[$, hence for all $t>0$: in
  fact, if $t>\delta$, one can write
  $P_t^Xf=P^X_{t-\delta/2}P^X_{\delta/2}f$, from which it follows that
  $P^X_tf \in C_b$ because $P_{t-\delta/2}^X$ is Feller and
  $P^X_{\delta/2}$ is strong Feller.
\end{proof}

Here is a result about the $L_p$-strong Feller property of $P^X_t$.
\begin{thm} \label{thm:drift-Lp}
  Let $1 \leq p \leq \infty$. Assume that $\xi =0$, $b \in
  C_b(\erre^d)$ and that there exists $\delta>0$ such that
  \begin{equation}     \label{eq:11.0}
    \int_0^\delta \E T_s^{-\frac12-\frac{d}{2p}} \,ds < \infty.
  \end{equation}
  Then $P^X_tf \in C_b(\erre^d)$ for all $f \in L_p(\erre^d)$.
\end{thm}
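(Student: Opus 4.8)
The plan is to follow the scheme of the proof of Theorem~\ref{thm:drift} with $\ell=0$, the essential simplification being that here $b$ is bounded, so that no weight function $(1+|\cdot|)^{-\ell}$ is needed and one may work directly with $f\in L_p(\erre^d)$ in place of $f\in\B_{b,c}(\erre^d)$. As in that proof, it suffices to establish the claim for $t \in ]0,\delta[$: once this is known, for $t\geq\delta$ one writes $P^X_tf=P^X_{t-\delta/2}\bigl(P^X_{\delta/2}f\bigr)$ and concludes $P^X_tf\in C_b(\erre^d)$, since $P^X_{\delta/2}f\in C_b(\erre^d)$ by the case already treated and $P^X_{t-\delta/2}$ is Feller.

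The key ingredient is the $L_p$-version of estimate~\eqref{eq:33}. Since $T$ is non-decreasing, $s\mapsto\E T_s^{-a}$ is non-increasing for every $a>0$, so the a.e.\ finiteness of $\E T_s^{-\frac12-\frac{d}{2p}}$ on $]0,\delta[$ provided by~\eqref{eq:11.0} in fact forces $\E T_s^{-\frac12-\frac{d}{2p}}<\infty$ for \emph{all} $s>0$; moreover $\E T_s^{-d/(2p)}\leq1+\E T_s^{-\frac12-\frac{d}{2p}}<\infty$. Hence Corollary~\ref{cor:Lippi} (with $k=1$) gives $P^Y_s\bigl(L_p(\erre^d)\bigr)\subseteq C^1_b(\erre^d)$ for every $s>0$, and Theorem~\ref{thm:bd1} (with $k=1$, $\ell=0$, $q=p$) yields the quantitative bound
\[
\bigl\|DP^Y_sf\bigr\|_\infty\lesssim\|f\|_{L_p(\erre^d)}\,\E T_s^{-\frac12-\frac{d}{2p}},\qquad s>0.
\]

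Fix now $t \in ]0,\delta[$ and apply Duhamel's formula~\eqref{eq:Duhamel}, $P^X_tf=P^Y_tf+\int_0^tP^X_{t-s}\ip{b}{DP^Y_sf}\,ds$. The first term lies in $C^1_b(\erre^d)\subset C_b(\erre^d)$ by the above. For the integrand, $b\in C_b(\erre^d)$ and $DP^Y_sf\in C_b(\erre^d)$ give $\ip{b}{DP^Y_sf}\in C_b(\erre^d)$, hence each $P^X_{t-s}\ip{b}{DP^Y_sf}\in C_b(\erre^d)$ by the Feller property; moreover, since $P^X_{t-s}$ is a contraction on $L_\infty$,
\[
\bigl\|P^X_{t-s}\ip{b}{DP^Y_sf}\bigr\|_\infty\leq\|b\|_\infty\,\bigl\|DP^Y_sf\bigr\|_\infty\lesssim\|b\|_\infty\,\|f\|_{L_p(\erre^d)}\,\E T_s^{-\frac12-\frac{d}{2p}},
\]
and the right-hand side is integrable on $]0,t[\,\subset\,]0,\delta[$ precisely by~\eqref{eq:11.0}. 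Thus the integral converges absolutely in $L_\infty$-norm and is bounded, and, for $x_n\to x$ in $\erre^d$, dominated convergence (with the $s$-integrable majorant above and the continuity of each slice $P^X_{t-s}\ip{b}{DP^Y_sf}$) shows it is also continuous. Therefore $P^X_tf\in C_b(\erre^d)$ for every $t \in ]0,\delta[$, and the reduction in the first paragraph completes the proof for all $t>0$.

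The difficulty does not lie in the estimates, which run exactly as in the case $\ell=0$ of Theorem~\ref{thm:drift}; the main point requiring care is the \emph{meaning} of $P^X_tf$ for $f\in L_p(\erre^d)$, which is defined only up to Lebesgue-null sets. This is handled by reading~\eqref{eq:Duhamel} as the definition of $P^X_tf$ on $L_p(\erre^d)$: the leading term $P^Y_tf$ is unambiguous because $P^Y_t$ is $L_p$-strong Feller, while every integrand $\ip{b}{DP^Y_sf}$ is a genuine bounded continuous (hence Borel) function, so the operator $P^X_{t-s}$, a priori defined on $\B_b(\erre^d)$, truly applies to it. The remaining delicate points---the validity of Duhamel's identity in this $L_p$ setting, and the consistency of the time-$t$ and concatenated expressions for $P^X_tf$ used in the reduction step---are dealt with exactly as in the proof of Theorem~\ref{thm:drift}.
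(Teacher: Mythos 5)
Your proof is correct and follows essentially the same route as the paper's: both apply Corollary~\ref{cor:Lippi} and Theorem~\ref{thm:bd1} (with $k=1$, $\ell=0$, $q=p$) to get $\|DP^Y_sf\|_\infty\lesssim\|f\|_{L_p}\,\E T_s^{-\frac12-\frac{d}{2p}}$, plug this into Duhamel's formula, bound the integral in $L_\infty$ via Minkowski and hypothesis~\eqref{eq:11.0}, and extend from $t\in\,]0,\delta[$ to all $t>0$ by the semigroup property and Feller property. Your extra remarks (monotonicity of $s\mapsto\E T_s^{-a}$, the dominated-convergence argument for continuity of the integral, and the well-definedness of $P^X_tf$ on $L_p$) spell out points the paper either leaves implicit, refers back to the previous theorem for, or relegates to the remark immediately following the theorem.
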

\begin{proof}
  The proof is similar to the one of the previous theorem, hence we
  omit some detail. By Corollary \ref{cor:Lippi} one has $DP_s^Yf \in
  C_b$ with
  \[
  \bigl\| DP_s^Yf \bigr\|_\infty \lesssim \|f\|_{L_p} \,
  \E T_s^{-\frac12-\frac{d}{2p}} \qquad \forall s \in ]0,\delta[.
  \]
  For any $t \in ]0,\delta[$, we have, by Duhamel's formula,
  \[
    P^X_tf = P^Y_tf + \int_0^t P^X_{t-s} \bip{b}{DP^Y_{s}f}\,ds.
  \]
  Since $P^Y_tf \in C^1_b$, it is enough to prove that the integral on
  the right-hand side belongs to $C_b$. Since $P^Y_tf \in C^1_b$ and
  $b \in C$ imply $\bip{b}{DP^Y_{s}f} \in C$, it is enough to show
  that the sup-norm of the integral is finite: one has
  \[
  \bigl\| \bip{b}{DP^Y_{s}f} \bigr\|_\infty \lesssim \|b\|_\infty \,
  \|f\|_{L_p} \, \E T_s^{-\frac12-\frac{d}{2p}}
  \qquad \forall s \in ]0,\delta[,
  \]
  hence also, by Minkowski's inequality,
  \[
  \biggl\| \int_0^t P^X_{t-s} \bip{b}{DP^Y_{s}f}\,ds \biggr\|_\infty 
  \lesssim \|b\|_\infty \, \|f\|_{L_p} 
  \int_0^t \E T_s^{-\frac12-\frac{d}{2p}}\,ds,
  \]
  which is finite by assumption \eqref{eq:11.0}. This proves that
  $P^X_t$ maps $L_p$ to $C_b$ for all $t \in ]0,\delta[$, hence also
  for all $t>0$ by the same argument used above.
\end{proof}
\begin{rmk}
  Note that, choosing $f$ equal to zero outside a set of Lebesgue
  measure zero, it is immediately seen that $P_t^Xf_1=P_t^Xf_2$
  everywhere if $f_1=f_2$ almost everywhere.
\end{rmk}

\begin{ex}
  Assume that $T$ is self-similar with self-similarity index $\beta$,
  i.e. $T_t = t^\beta T_1$ in distribution, and $\E
  T_1^{-1/2}<\infty$. Then \eqref{eq:11} certainly holds if $\beta<2$
  and $\ell \in \{0,1\}$. In fact, one has $\E
  T_t^{-1/2}=t^{-\beta/2}\E T_1^{-1/2}$, which is integrable with
  respect to $t$ around zero if (and only if) $\beta<2$. In
  particular, assuming $\ell \in \{0,1\}$, \eqref{eq:11} always holds
  if $T$ is an $\alpha/2$-stable subordinator, which is self-similar
  with index $\alpha/2$, $\alpha < 2$, and whose inverse moments (of
  any order) are finite, as already seen above. This in turn implies
  the strong Feller property for the semigroup generated by the
  solution to an SDE with linearly growing drift (e.g. of
  Ornstein-Uhlenbeck type) driven by a rotationally invariant stable
  process.
\end{ex}

Finally, we state a result for the case that $\xi \neq 0$ in
\eqref{eq:sde}.
\begin{thm}\label{thm:drift2}
Assume that there exists $\delta >0$ such that
\begin{equation}\label{eq:thm-drift2-1}
\int _0 ^\delta \E T_s ^{-1/2} ds < \infty ,
\end{equation}
and $b\in C_b(\erre^d)$. Then, $P^X_t f$ is strong Feller for $t>0$.
\end{thm}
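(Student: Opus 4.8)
The plan is to run the perturbation argument of Theorem~\ref{thm:drift-Lp} (in the case $p=\infty$), with the subordinated-Wiener semigroup $P^Y$ replaced throughout by the semigroup $P^Z$ of the full driving noise $Z=Y+\xi$. The only input that is not literally the one used there is the first-order smoothing estimate for $P^Z$. First, exactly as in the proof of Theorem~\ref{thm:drift}, assumption \eqref{eq:thm-drift2-1} forces $\E T_s^{-1/2}<\infty$ for every $s\in\,]0,\delta[$: indeed $s\mapsto\E T_s^{-1/2}$ is non-increasing ($T$ being an increasing process), so if it were infinite at some $s_0\in\,]0,\delta[$ it would be infinite on $]0,s_0]$, contradicting \eqref{eq:thm-drift2-1}. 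Then Corollary~\ref{cor:smoothY2} applies and gives that $P^Z_s$ is $1$-smoothing for each such $s$, together with the bound
\[
\bigl\| D P^Z_s f \bigr\|_\infty \;\lesssim\; \|f\|_\infty\,\E T_s^{-1/2},
\qquad f\in\B_b(\erre^d),\ s\in\,]0,\delta[.
\]

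Next I would fix $t\in\,]0,\delta[$ and $f\in\B_b(\erre^d)$; boundedness of $P^X_tf$ is immediate from $P^X_tf(x)=\E f(X^x_t)$, so only continuity must be shown. Writing Duhamel's formula relative to the free dynamics $dX=dZ$,
\[
P^X_tf \;=\; P^Z_tf \;+\; \int_0^t P^X_{t-s}\bip{b}{DP^Z_sf}\,ds,
\]
the first term lies in $C^1_b(\erre^d)\subseteq C_b(\erre^d)$ by the previous step, and $\bip{b}{DP^Z_sf}\in C_b(\erre^d)$ because $P^Z_sf\in C^1_b$ and $b\in C_b$. Using that $P^X_{t-s}$ is a contraction on $L_\infty$ (it is Markovian) and the displayed estimate,
\[
\bigl\| P^X_{t-s}\bip{b}{DP^Z_sf} \bigr\|_\infty
\;\lesssim\; \|b\|_\infty\,\|f\|_\infty\,\E T_s^{-1/2},
\]
which is integrable on $]0,t[$ by \eqref{eq:thm-drift2-1}; since moreover, for each fixed $s$, $x\mapsto P^X_{t-s}\bip{b}{DP^Z_sf}(x)$ is continuous (the Feller property of $P^X$, assumed throughout this section), dominated convergence shows that the Duhamel integral is a continuous function of $x$. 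Hence $P^X_tf\in C_b(\erre^d)$, i.e.\ $P^X_t$ is strong Feller for all $t\in\,]0,\delta[$. For $t\ge\delta$ one writes $P^X_tf=P^X_{t-\delta/2}\bigl(P^X_{\delta/2}f\bigr)$ and concludes that $P^X_tf\in C_b$, since $P^X_{\delta/2}$ is strong Feller and $P^X_{t-\delta/2}$ is Feller.

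I do not expect a genuine obstacle here: the theorem is, in effect, the $p=\infty$ case of Theorem~\ref{thm:drift-Lp} with the driving noise $Y$ enlarged by an arbitrary independent L\'evy component $\xi$, and both ingredients needed for that enlargement are already available. The one substantive point — that the smoothing estimate $\|DP^Z_sf\|_\infty\lesssim\|f\|_\infty\,\E T_s^{-1/2}$ is unaffected by the presence of $\xi$ — is precisely Corollary~\ref{cor:smoothY2} (itself a consequence of Proposition~\ref{prop:multi-noise}), while the Duhamel perturbation scheme is verbatim that of Theorems~\ref{thm:drift} and \ref{thm:drift-Lp}. The only place deserving a line of care is the dominated-convergence argument for continuity of the Duhamel integral, and it carries over unchanged.
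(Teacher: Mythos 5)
Your proposal is correct and follows essentially the same route as the paper: the paper's proof simply declares the argument "completely analogous to that of Theorem~\ref{thm:drift} for $\ell=0$," with the single substitution of the smoothing estimate for the full noise (the paper cites Proposition~\ref{prop:smoothY3}, of which your Corollary~\ref{cor:smoothY2} is the L\'evy specialization). The only cosmetic deviations are that you work directly with $f\in\B_b$ and invoke dominated convergence for continuity of the Duhamel integral, whereas the paper's template restricts to $\B_{b,c}$ and closes via Lemma~\ref{lm:Feller}; both are valid here since $b\in C_b$.
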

\begin{proof}
  The proof is completely analogous to that of Theorem \ref{thm:drift}
  for $\ell =0$. The only difference is that one has to appeal to
  Proposition \ref{prop:smoothY3} instead of Theorem \ref{thm:bd1}.
\end{proof}

\begin{rmk}
  Theorem \ref{thm:drift2} implies that if $Z$ can be decomposed into
  the independent sum of L\'evy processes $Y$ and $\xi$, if $Y$ is a
  subordinated Wiener process, and if the subordinator satisfies the
  integrable condition of the negative moment (\ref{eq:thm-drift2-1}),
  $P^X_t$ has the strong Feller property for $t>0$.  Here, note that
  nothing is assumed on $\xi$.  This means that the part $Y$ of the
  noise determines the smoothing properties of $P^X_t$.
\end{rmk}


\section{Strong Feller property via Malliavin
  calculus: a special case}
\label{sec:Malliavin}

Unfortunately it does not seem possible to adapt the method of the
previous section to the case of equations with multiplicative noise of
the type
\begin{equation}     \label{eq:fuerte}
dX_t = b(t,X_t)\,dt + \sigma(t,X_{t-})\,dY_t, \qquad X_0=x \in \erre^d,
\end{equation}
essentially because one would need to have quantitative control on the
smoothing properties at small time of the semigroup generated by the
solution to the corresponding SDE without drift.

In the following we obtain the strong Feller property for the
semigroup generated by $X$, by a completely different method. In particular, adapting some techniques based on
Malliavin calculus that were developed in \cite{Sei2}, we consider
equations driven by rotationally-invariant stable processes. Some
smoothness of the coefficients $b$ and $\sigma$ has also to be imposed
(cf.~Theorem \ref{thm3-2} below).

Let us recall that, for SDEs driven by Brownian motions, general
existence and regularity results of transition probability densities
(implying the strong Feller property) are obtained in \cite{KuStr2}. A
crucial role in the argument of \cite{KuStr2} is played by existence
and integrability properties (with respect to the probability measure)
of the stochastic flow. In the case of equations driven by stable
processes, a major obstruction to the extension of this method comes
from the fact that $\alpha$-stable laws have infinite moments of order
$\alpha$ and higher. In \cite{Sei2} a version of Malliavin calculus
for SDEs driven by stable noise is developed, via subordination
techniques, avoiding the problem of integrability of the stochastic
flow.

Before turning to the main result of this section, we provide a
sufficient condition on the transition densities of a Markovian
semigroup to be strong Feller.
\begin{lemma} \label{thm3-1}
  Let $(P_t)_{t>0}$ be a Markovian semigroup on $\B_b(\erre^d)$ such
  that
  \[
  P_t f(x) =\int_{\erre^d} f(y) q_t(x,y)\,dy,
  \]
  for some function $q_t: \erre^d \times \erre^d \to \erre$.  Assume
  that $P_t$ has the Feller property and that for all $M>0$ and
  $x_0 \in {\erre}^d$, there exists $p>1$ and $\gamma>0$ such that
  \begin{equation}
    \label{u-bdd}
    \sup_{x\in {\erre}^d ;\, |x-x_0|<\gamma } 
    \int_{\{ y\in {\erre}^d;\, |y|\leq M \}} q_t(x,y) ^p \,dy 
    < \infty.
  \end{equation}
  Then $P_t$ has the strong Feller property.
\end{lemma}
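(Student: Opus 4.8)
The plan is to reduce the strong Feller property to the $c$-strong Feller property by means of Lemma \ref{lm:Feller} (which applies because $P_t$ is assumed Feller), and then to verify the $c$-strong Feller property through an approximation argument in which hypothesis (\ref{u-bdd}) plays the role of a uniform absolute-continuity estimate for the kernels $q_t(x,\cdot)$.

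Concretely, I would fix $f \in \B_{b,c}(\erre^d)$, say with $\mathrm{supp}\,f \subseteq \{|y|\le M_0\}$, and a point $x_0 \in \erre^d$; since $x_0$ is arbitrary and $\|P_tf\|_\infty \le \|f\|_\infty$ because $P_t$ is sub-Markovian, it suffices to show that $P_tf$ is continuous on some neighbourhood of $x_0$. First I would invoke (\ref{u-bdd}) with $M := M_0+1$ and this $x_0$ to obtain $p>1$, $\gamma>0$ and a finite constant $C$ with $\int_{\{|y|\le M\}} q_t(x,y)^p\,dy \le C$ for all $x$ such that $|x-x_0|<\gamma$; note that the conjugate exponent $p'$ is finite. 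Next I would introduce the mollifications $g_k := f\ast\rho_{1/k} \in C^\infty_c(\erre^d)$, which satisfy $\|g_k\|_\infty\le\|f\|_\infty$, $\mathrm{supp}\,g_k\subseteq\{|y|\le M\}$ for $k$ large, and $g_k\to f$ in $L_{p'}(\erre^d)$. Then, for $|x-x_0|<\gamma$, Hölder's inequality on the ball $\{|y|\le M\}$, using that $q_t(x,\cdot)\ge0$ a.e.\ (a consequence of positivity preservation), gives
\[
\bigl|P_tf(x)-P_tg_k(x)\bigr|
= \Bigl|\int_{\{|y|\le M\}}(f-g_k)(y)\,q_t(x,y)\,dy\Bigr|
\le C^{1/p}\,\|f-g_k\|_{L_{p'}(\erre^d)},
\]
so $P_tg_k\to P_tf$ uniformly on $\{|x-x_0|<\gamma\}$. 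Since each $g_k\in C_b(\erre^d)$ and $P_t$ is Feller, $P_tg_k$ is continuous, hence so is $P_tf$ on that ball. This yields $P_tf\in C_b(\erre^d)$ for every $f\in\B_{b,c}(\erre^d)$, i.e.\ the $c$-strong Feller property, and Lemma \ref{lm:Feller} then gives the strong Feller property.

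I do not expect a genuine obstacle here: the only point requiring attention is the bookkeeping of supports, which is precisely why the integrability hypothesis is applied with the slightly enlarged radius $M_0+1$, so that the mollified approximants $g_k$ remain supported in $\{|y|\le M\}$ and Hölder's inequality can be used against the uniform $L_p$-bound on the kernels. It should also be kept in mind that $p$ and $\gamma$ are permitted to depend on $x_0$ and $M$, but this is immaterial since continuity of $P_tf$ is a local property.
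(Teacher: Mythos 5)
Your argument is correct and follows essentially the same route as the paper's: approximate $f\in\B_{b,c}$ by smooth compactly supported functions in $L_{p'}$ (you via mollification, the paper via density of $C^\infty_c$), control $P_t(f-g)$ uniformly near $x_0$ by Hölder's inequality against the uniform $L_p$ bound \eqref{u-bdd} on the kernels, use the Feller property for the smooth approximants, and finish with Lemma~\ref{lm:Feller}. The differences (mollifiers vs.\ abstract density, uniform convergence on a ball vs.\ a pointwise $\varepsilon$-argument at $x_0$, $M=M_0+1$ vs.\ $\mathrm{supp}\,f\subset\{|x|<M/2\}$) are cosmetic.
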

\begin{proof}
  Let $f\in {\B _{b,c}}(\erre ^d)$ be given. Take $M>0$ such that
  $\operatorname{supp} f \subset \{x\in {\erre}^d ;|x|<M/2\}$.  Let
  $x_0 \in \erre ^d$ given.  Choose $p>1$ and $\gamma >0$ such that
  (\ref{u-bdd}) holds and let
  \[
  C:= \sup _{x\in {\erre}^d ;\, |x-x_0|<\gamma } \int _{\{ y\in
    {\erre}^d;\, |y|\leq M \}} q_t(x,y) ^p dy.
  \]
  Let $p'$ be the conjugate exponent of $p$, i.e. $1/p +1/p' =1$.  For
  any $\varepsilon >0$ there exists $g\in C_c^\infty (\erre ^d)$ such
  that $||f-g||_{p'} < (4C^{1/p})^{-1} \varepsilon$ and
  $\operatorname{supp} g \subset \{x\in {\erre}^d ;|x|<M\}$.  Since
  $P_t$ has the Feller property, there exists $\delta >0$ such that
  $|P_t g(x)- P_t g(x_0)| <\varepsilon /2$ if $|x-x_0| <\delta$.
  Therefore we have, for $|x-x_0| < \min(\delta,\gamma)$,
  \begin{align*}
    & |P_t f(x) - P_t f(x_0)|\\
    & \leq |P_t f(x) - P_t g(x)| + | P_t g(x) - P_t g(x_0)| + |P_t g(x_0) - P_t f(x_0)|\\
    & < \frac{\varepsilon}{2} + \int _{\erre ^d} |f(y)-g(y)| q_t(x,y) dy + \int _{\erre ^d} |f(y)-g(y)| q_t(x_0,y) dy \\
    & \leq \frac{\varepsilon}{2} + 2||f-g||_{p'} \left( \sup _{x\in {\erre}^d ;\, |x-x_0|<\gamma } \int _{\{ y\in {\erre}^d;\, |y|\leq M \}} q_t(x,y) ^p dy \right) ^{1/p} \\
    & < \varepsilon.
  \end{align*}
  Thus, we have the continuity of $P_t f$ at $x_0$.  Since $x_0$ is an
  arbitrary point in $\erre ^d$, we have $P_tf \in C(\erre ^d)$.  The
  boundedness of $P_tf$ follows immediately from the Markov property
  of $P_t$.  Hence, $P_t$ is $c$-strong Feller, and the proof is
  completed thanks to Lemma \ref{lm:Feller}.
\end{proof}
\begin{rmk}
  Lemma \ref{thm3-1} is a criterion for Markovian semigroups to be
  strong Feller, and the criterion is similar to that obtained in
  \cite[Corollary 2.2]{SchiWa}. The advantage of Lemma \ref{thm3-1} is
  that it is applicable to the case that the transition probability
  density is not bounded. For example, the gamma process with certain
  parameters has unbounded transition density, but it satisfies the
  strong Feller property. We also remark that, on the other
  hand, \cite[Corollary~2.2]{SchiWa} would suffice to prove Theorem
  \ref{thm3-2}.
\end{rmk}

Similarly to Lemma \ref{thm3-1}, we have the following sufficient
condition for the $L_p$-strong Feller property.
\begin{coroll}
  Let $t \in (0,\infty)$.  Assume that $P_t$ has the Feller property
  and that for all $x_0\in {\erre}^d$, there exists $p>1$
  and $\gamma >0$ such that
  \[
    \sup _{x\in {\erre}^d ;\, |x-x_0|<\gamma } 
    \int _{{\erre}^d} q_t(x,y) ^p dy <\infty.
  \]
  Then, $P_t$ has the $L_{p'}$-strong Feller property, where $p'$ be
  the conjugate exponent of $p$.
\end{coroll}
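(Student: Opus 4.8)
The plan is to repeat, essentially verbatim, the argument proving Lemma~\ref{thm3-1}, the single structural difference being that the hypothesis now bounds the \emph{full} integral $\int_{\erre^d}q_t(x,y)^p\,dy$ uniformly over a neighbourhood of each point, instead of only the truncated one $\int_{\{|y|\le M\}}q_t(x,y)^p\,dy$. It is precisely this extra control in the $y$ variable that lets one drop the requirement that the test function lie in $\B_{b,c}(\erre^d)$ and work instead with an arbitrary $f\in L_{p'}(\erre^d)$: since $p>1$ we have $p'<\infty$, so $C^\infty_c(\erre^d)$ is dense in $L_{p'}(\erre^d)$ and an approximation scheme remains available.

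In detail, I would fix $x_0\in\erre^d$, invoke the hypothesis to choose $p>1$ and $\gamma>0$ with $C:=\sup_{|x-x_0|<\gamma}\int_{\erre^d}q_t(x,y)^p\,dy<\infty$, and let $f\in L_{p'}(\erre^d)$. Given $\varepsilon>0$, pick $g\in C^\infty_c(\erre^d)$ with $\|f-g\|_{p'}<\varepsilon/(4C^{1/p})$; by the Feller property $P_tg\in C_b(\erre^d)$, so there is $\delta>0$ with $|P_tg(x)-P_tg(x_0)|<\varepsilon/2$ whenever $|x-x_0|<\delta$. For $|x-x_0|<\min(\delta,\gamma)$ one splits
\[
|P_tf(x)-P_tf(x_0)|\le|P_t(f-g)(x)|+|P_tg(x)-P_tg(x_0)|+|P_t(g-f)(x_0)|,
\]
and bounds the first and third terms by Hölder's inequality,
\[
|P_t(f-g)(x)|\le\|f-g\|_{p'}\Bigl(\int_{\erre^d}q_t(x,y)^p\,dy\Bigr)^{1/p}\le C^{1/p}\|f-g\|_{p'}<\frac{\varepsilon}{4},
\]
and likewise at $x_0$, so that $|P_tf(x)-P_tf(x_0)|<\varepsilon$. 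Since $x_0$ was arbitrary this gives $P_tf\in C(\erre^d)$ for every $f\in L_{p'}(\erre^d)$; and a single application of Hölder's inequality, $|P_tf(x)|\le\|f\|_{p'}\bigl(\int_{\erre^d}q_t(x,y)^p\,dy\bigr)^{1/p}$, supplies the accompanying quantitative bound, so that $P_t$ is $L_{p'}$-strong Feller.

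The one point requiring genuine care --- and what I expect to be the main obstacle --- is the boundedness of $P_tf$ on all of $\erre^d$: as literally stated, the hypothesis controls $\|q_t(x,\cdot)\|_{L_p}$ only locally in $x$, which by the Hölder estimate above makes $P_tf$ locally bounded but not obviously an element of $C_b(\erre^d)$; to conclude membership in $C_b$, hence the $L_{p'}$-strong Feller property in the sense of the corresponding definition, one needs the supremum in the hypothesis to be uniform in $x_0$ (automatic, for instance, when $P_t$ is translation invariant, since then $\|q_t(x,\cdot)\|_{L_p}$ is independent of $x$, and unproblematic in the situations we have in mind). Apart from this, the continuity argument is a mechanical transcription of the proof of Lemma~\ref{thm3-1}, and the choices of $p$, $p'$, the approximation by $C^\infty_c$ functions, and the three-term splitting all carry over unchanged.
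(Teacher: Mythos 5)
Your proof is essentially identical to the paper's: the authors simply remark that the argument is the same as in Lemma~\ref{thm3-1}, with $f\in L_{p'}$ in place of $f\in\B_{b,c}$, no need for the cutoff radius $M$, and no need for Lemma~\ref{lm:Feller}. The density of $C^\infty_c$ in $L_{p'}$, the H\"older splitting, and the three-term estimate are exactly what the paper has in mind. Your caveat in the final paragraph is a sharp observation and worth keeping: the hypothesis as literally written bounds $\|q_t(x,\cdot)\|_{L_p}$ only locally in $x$ (and even allows $p$ and $\gamma$ to depend on $x_0$), so the argument delivers continuity and local boundedness of $P_tf$, but not membership in $C_b(\erre^d)$ nor boundedness of the operator $L_{p'}\to C_b$ unless the supremum is uniform in $x_0$; the paper's proof glosses over this point, whereas Lemma~\ref{thm3-1} could appeal to Markovianity (and boundedness of $f$) for the sup bound, an escape route that is unavailable here.
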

\begin{proof}
  The proof is almost the same as the one of Lemma \ref{thm3-1}.  The
  difference is that we take $f\in L_{p'}$ instead of taking $f \in
  \B_{b,c}(\erre ^d)$, and that we do not need either to take $M>0$
  nor to apply Lemma \ref{lm:Feller}.
\end{proof}

We can now state and prove the main result of this section, which
asserts that if the coefficients of \eqref{eq:fuerte} are sufficiently
smooth and if the diffusion coefficient is uniformly elliptic, we have
the strong Feller property of the associated Markovian semigroup.

Let $X$ be the unique solution to \eqref{eq:fuerte}, where $Y$ is
$d$-dimensional rotationally-invariant $\alpha$-stable process,
$\sigma \in C([0,\infty[ \times \erre^d; \erre^d \otimes \erre^d)$,
$b\in C([0,\infty[ \times \erre^d ; \erre^d)$, and there exists $K>0$
such that
\[
|\sigma (t,x) -\sigma (t,y)|_{\erre^d \otimes \erre^d} 
+ |b(t,x)-b(t,y)|_{\erre^d} \leq K|x-y|, 
\qquad \forall x,y \in {\erre}^d, \; t \in [0,\infty[.
\]
As usual, we shall denote by $P^X_t$ the Markovian semigroup defined by
\[
P_t^X f(x) := \E f(X^x_t), \qquad f \in \B_b(\erre^d).
\]
Note that, thanks to the Lipschitz continuity hypothesis on $b$ and
$\sigma$, it is well known that the solution $X_t^x$ to
\eqref{eq:fuerte} depends continuously on the initial datum $x$, which
in turn implies that $P_t^X$ is Feller.
\begin{thm}\label{thm3-2}
  Assume that there exist positive numbers $\delta$ and $\varepsilon$
  such that $\sigma \in C^{0,2}([0,\delta] \times \erre ^d; \erre^d
  \otimes \erre^d)$, $\nabla \sigma \in C_b^{0,1}([0,\delta] \times
  \erre ^d; \erre^d \otimes \erre^d \otimes \erre^d)$, $b \in
  C^{0,2}([0,\delta ]\times \erre ^d; \erre^d)$, $\nabla b \in
  C_b^{0,1}([0,\delta]\times \erre ^d; \erre^d \otimes \erre^d)$, and
  \[
  |\sigma (t,x)\xi |^2 \geq \varepsilon |\xi |^2, \qquad \forall \xi
  \in \erre^d, \; t \in [0,\delta], \; x \in \erre^d.
  \]
  Then $P_t^X$ is strong Feller for all $t>0$.
\end{thm}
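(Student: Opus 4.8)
\section*{Proof strategy}

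The strategy is to verify the hypotheses of Lemma \ref{thm3-1}. We have already observed that the (global) Lipschitz continuity of $b$ and $\sigma$ makes $x \mapsto X^x_t$ continuous, hence $P^X_t$ Feller, so it suffices to show that for each $t \in (0,\delta]$ the law of $X^x_t$ admits a density $q_t(x,\cdot)$ such that, for every $M>0$ and every $x_0 \in \erre^d$, there are $p>1$ and $\gamma>0$ with
\[
\sup_{|x-x_0|<\gamma} \int_{|y|\leq M} q_t(x,y)^p \,dy < \infty ;
\]
Lemma \ref{thm3-1} then yields the strong Feller property of $P^X_t$ for all $t \in (0,\delta]$.

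To construct such a density I would adapt the Malliavin calculus for stable-driven SDEs developed in \cite{Sei2}. Since $Y$ is rotationally invariant $\alpha$-stable, it can be realized as $Y = W\circ T$ with $T$ an $\alpha/2$-stable subordinator independent of the $d$-dimensional Wiener process $W$; conditionally on the trajectory of $T$, equation \eqref{eq:fuerte} is an SDE driven by a time-changed Brownian motion, to which Gaussian Malliavin calculus applies. Under the stated smoothness of $b$ and $\sigma$ and the uniform ellipticity $|\sigma(t,x)\xi|^2 \geq \varepsilon|\xi|^2$, the arguments of \cite{Sei2} show that $X^x_t$ is conditionally Malliavin differentiable with a non-degenerate Malliavin covariance matrix; the finiteness of the inverse moments of this matrix rests, besides ellipticity, on the finiteness of all negative moments of the $\alpha/2$-stable subordinator (cf.\ the proof of Corollary \ref{cor:smal}). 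The usual integration-by-parts identity then produces the density $q_t(x,\cdot)$ together with $L^p$ bounds on it. The crucial extra point is that all the moment estimates entering this argument --- for the first variation process $\nabla_x X^x$, its higher $x$-derivatives, and the inverse of the Malliavin matrix --- are bounded locally uniformly in $x$, because $b$, $\sigma$ and their space derivatives are bounded on $[0,\delta]\times\erre^d$; this gives precisely the displayed supremum bound.

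It remains to pass from $t \in (0,\delta]$ to arbitrary $t>0$. For $t>\delta$ the Markov property of $(s,X_s)$ gives $P^X_t f(x) = \E\bigl[(Q_{\delta/2,t}f)(X^x_{\delta/2})\bigr] = P^X_{\delta/2}\bigl(Q_{\delta/2,t}f\bigr)(x)$, where $Q_{\delta/2,t}$ denotes the transition operator of \eqref{eq:fuerte} from time $\delta/2$ to time $t$. Since $b$ and $\sigma$ are Lipschitz, the solution started at time $\delta/2$ depends continuously on its initial value, so $Q_{\delta/2,t}$ is Feller and $Q_{\delta/2,t}f \in C_b$; applying the strong Feller operator $P^X_{\delta/2}$ we conclude $P^X_t f \in C_b$. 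Hence $P^X_t$ is strong Feller for every $t>0$.

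The main obstacle is the input from \cite{Sei2}: establishing the non-degeneracy of the conditional Malliavin covariance matrix and controlling the moments of its inverse for an SDE driven by a stable process is delicate, both because the stable noise has heavy tails (so only low-order positive moments of $T$ are finite) and because the subordinator $T$ can be arbitrarily small over short intervals; one must moreover verify carefully that the resulting density bounds are uniform in the starting point $x$ on compact sets, since it is exactly this uniformity --- and not the mere pointwise existence of a density --- that makes the hypothesis of Lemma \ref{thm3-1} applicable. Everything else is a routine assembly of the pieces.
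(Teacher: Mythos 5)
Your proposal follows essentially the same route as the paper's: both cite the Malliavin-calculus density existence and boundedness results of \cite{Sei2} to verify the hypothesis of Lemma \ref{thm3-1} on $(0,\delta]$, then extend to $t>\delta$ via the Markov property and the Feller property. A small imprecision in your last step: for $f\in\B_b$ the assertion $Q_{\delta/2,t}f\in C_b$ would require $Q_{\delta/2,t}$ to be \emph{strong} Feller, which you have not established and do not need --- $Q_{\delta/2,t}f\in\B_b$ suffices, since you then apply the strong Feller operator $P^X_{\delta/2}$.
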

\begin{proof}
  Let $f\in \B _{b}(\erre ^d)$. Let us assume, for the time being,
  that $t\in (0,\delta]$. By \cite[Thm.~6.2]{Sei2}, for each $x\in
  \erre^d$ the density function $q_t(x,\cdot)$ of the distribution of
  $X^x_t$ exists and it belongs to $C_b(\erre^d)$. Furthermore,
  checking the dependence of the estimate for $q_t(x,y)$ in the proof
  \cite[Thm.~6.2]{Sei2}, one infers that $q_t \in \B(\erre^d \times
  \erre^d)$, hence also that \eqref{u-bdd} is satisfied.  We can then
  apply Lemma \ref{thm3-1} obtaining that $P_t^X f \in C_b(\erre ^d)$
  for all $t\in (0,\delta]$. Let now assume $t>\delta$. Since $P_t^X f
  = P_{t-\delta}^X (P_\delta^X f)$ and $P_\delta^X f \in C_b(\erre
  ^d)$, the Feller property of $P_t^X$ for all $t>0$ yields $P_t^X
  f\in C_b(\erre ^d)$.  The theorem is thus proved.
\end{proof}


\bibliographystyle{amsplain}
\bibliography{ref,extra}

\end{document}